\newtheorem{theorem}{Theorem}[section]
\newtheorem{proposition}[theorem]{Proposition}
\newtheorem{lemma}[theorem]{Lemma}
\newtheorem{construction}[theorem]{Construction}
\theoremstyle{definition}
\newtheorem{definition}[theorem]{Definition}
\newtheorem{remark}[theorem]{Remark}
\newcommand{\M}{\mathcal{M}}
\newcommand{\e}{\varepsilon}
\newcommand{\tr}{\mbox{\rm tr}}
\begin{document}
\title[Diagonality modulo symmetric spaces in semifinite vNa]{Diagonality modulo symmetric spaces in semifinite von Neumann algebras}

\begin{abstract} In the study on the diagonality of an $n$-tuple $\alpha=(\alpha(j))_{j=1}^n$ of commuting self-adjoint operators modulo a given $n$-tuple $\varPhi=(\mathcal{J}_1,\ldots,\mathcal{J}_n)$ of normed ideals in $B(H)$, Voiculescu introduced the notion of quasicentral modulus $k_{\varPhi}(\alpha)$ and proved that $\alpha$ is diagonal modulo $(\mathcal{J}_1,\ldots,\mathcal{J}_n)$ if and only if $k_{\varPhi}(\alpha)=0.$ We prove that the same assertion holds true when $B(H)$ is replaced with a $\sigma$-finite semifinite von Neumann algebra $\M$, and $\mathcal{J}_1,\ldots,\mathcal{J}_n$ are replaced with symmetric spaces $E_1(\M),\ldots,E_n(\M)$ associated with $\M.$
\end{abstract}

\keywords{Weyl-von Neumann theorem, Voiculescu's theorem, Diagonal operator, Normed ideal, von Neumann algebra}

\author[A.\ Ber]{Aleksey Ber}
\address{V.I.Romanovskiy Institute of Mathematics Uzbekistan Academy of Sciences
    University street, 9, Olmazor district, Tashkent, 100174, Uzbekistan,
    National University of Uzbekistan
    4, Olmazor district, Tashkent, 100174, Uzbekistan}
\email{aber1960@mail.ru}

\author[F.\ Sukochev]{Fedor Sukochev}
\address{School of Mathematics and Statistics, University of New South Wales, Kensington, NSW 2052, Australia}
\email{f.sukochev@unsw.edu.au}

\author[D.\ Zanin]{Dmitriy Zanin}
\address{School of Mathematics and Statistics, University of New South Wales, Kensington, NSW 2052, Australia}
\email{d.zanin@unsw.edu.au}

\author[H.\ Zhao]{Hongyin Zhao}
\address{School of Mathematics and Statistics, University of New South Wales, Kensington, NSW 2052, Australia}
\email{hongyin.zhao@unsw.edu.au}

\maketitle

\section{Introduction}\label{intro sec}

Let $H$ be a (possibly non-separable) Hilbert space and let $B(H)$ be the $\ast$-algebra of all bounded operators on $H$. An operator $d$ in $B(H)$ is called {\em diagonal} if there is a sequence of pairwise orthogonal projections $\{p_k\}_{k\in \mathbb{N}}$ in $B(H)$ such that $d=\sum_{k\in \mathbb{N}}\lambda_k p_k$, where the series converges in the strong operator topology and $\{\lambda_k\}_{k\in \mathbb{N}}$ is a sequence of complex numbers. A commuting $n$-tuple $(\delta(j))_{j=1}^n\in(B(H))^n$ is called a {\em diagonal $n$-tuple} if $\delta(j)$ is diagonal for each $1\le j\le n.$

In what follows, the symbol $H_0$ always refers to a separable Hilbert space. The classical Weyl’s theorem \cite{Weyl1909} states that every self-adjoint operator $b$ on $H_0$ admits the representation $b=d+c$, where $d$ is a diagonal self-adjoint operator and $c$ is a compact operator. Later, in 1935, von Neumann \cite{von1935} improved the result by replacing a “compact operator” with an “arbitrarily small Hilbert-Schmidt operator”. Kuroda \cite{Kuroda1958} showed that one can require $c$ to belong to any normed ideal in $B(H_0)$ other than the trace class. 

Kato \cite{Kato1957, Kato1957_2}, Rosenblum \cite{Rosenblum1957}, and Birman \cite{Birman1962, Birman1963} showed that, for any self-adjoint operators $b,b'$ on $H_0$ such that $b-b'$ is in the trace class, the absolutely continuous parts of $b$ and $b'$ are unitarily equivalent. Thus, if the spectral measure of $b$ is not singular, then there is no diagonal operator $d\in B(H_0)$ such that $b-d$ is in the trace class.

Berg \cite{Berg1971} proved that, for every normal operator $b$ on $H_0$ and every $\e>0,$ there is a diagonal operator $d$ and a compact operator $c$ such that $b=d+c$ and $\|c\|_{B(H_0)}\le \e.$ The diagonality of a normal operator modulo the Hilbert-Schmidt class is first proved by Voiculescu \cite{V1979}. More specifically, for every normal operator $b\in B(H_0)$ and every $\e>0$, there is a diagonal operator $d\in B(H_0)$ such that $\|b-d\|_{L_2(B(H_0))}\le\e$, where $L_2(B(H_0))$ is the Hilbert-Schmidt class.

Voiculescu proved the following more general result \cite{V1979}: if $n\ge2$, then for every commuting $n$-tuple of self-adjoint operators $(\alpha(j))_{j=1}^n\in (B(H_0))^n$ and every $\e>0$, there is a commuting $n$-tuple $(\delta(j))_{j=1}^n\in (B(H_0))^n$ of diagonal operators such that 
$$\|\alpha(j)-\delta(j)\|_{L_n(B(H_0))}\le \e,\quad 1\le j\le n,$$
where $L_n(B(H_0))$ is the Schatten-$n$ class in $B(H_0)$.

Let $(\mathcal{J}_1,\ldots,\mathcal{J}_n)$ be an $n$-tuple of normed ideals (see \cite{V2018}) in $B(H_0)$. We say that an $n$-tuple $\alpha\in (B(H_0))^n$ is {\em diagonal modulo $(\mathcal{J}_1,\ldots,\mathcal{J}_n)$} if there is a diagonal $n$-tuple $\delta\in (B(H_0))^n$ such that $\alpha(j)-\delta(j)\in \mathcal{J}_j$ for $1\le j\le n$ (see \cite{V2018}).

By extending the Kato-Rosenblum theorem to $n$-tuples, Voiculescu \cite{V1979, V2018} proved that a commuting self-adjoint $n$-tuple $\alpha\in (B(H_0))^n$ is diagonal modulo $(L_{n,1}(B(H_0)))^n$ if and only if the spectral measure of $\alpha$ is singular with respect to the Lebesgue measure in $\mathbb{R}^n$. Here $L_{n,1}(B(H_0))$ is the Lorentz-$(n,1)$ ideal in $B(H_0)$ (see \cite{V2018}).

Bercovici and Voiculescu \cite{Bercovici1989} obtained an analogue of Kuroda's theorem for $n$-tuples in $B(H_0)$. More specifically, they proved that if $\mathcal{J}$ is a normed ideal such that $\mathcal{J}\not\subset L_{n,1}(B(H_0))$, where $L_{n,1}(B(H_0))$ is the Lorentz-$(n,1)$ ideal in $B(H_0)$, then for every commuting self-adjoint $n$-tuple $\alpha\in (B(H_0))^n$ and every $\e>0$, there is a diagonal $n$-tuple $\delta\in (B(H_0))^n$ such that $\|\alpha(j)-\delta(j)\|_{\mathcal{J}}\le \e$ for $1\le j\le n$.
 
The key ingredient in Voiculescu's approach \cite{Bercovici1989, V1979, V2018} is the so-called ``quasicentral modulus'' of a given commuting self-adjoint $n$-tuple $\alpha\in (B(H_0))^n$ and a given $n$-tuple $\varPhi=(\mathcal{J}_1,\ldots,\mathcal{J}_n)$ of normed ideals in $B(H_0)$:
\begin{equation}
k_{\varPhi}(\alpha)=\sup_{a\in \mathcal{F}_1^+}\inf_{\stackrel{r\ge a}{r\in \mathcal{F}_1^+}} \max_{1\le j\le n}\|[r,\alpha(j)]\|_{\mathcal{J}_j},
\end{equation}
where $\mathcal{F}_1^+=\{r\in B(H_0):0\le r\le {\bf 1},\ {\rm rank}(r)<\infty\}$. Voiculescu \cite[Corollary 2.6]{V1979} (see also \cite[Proposition 7.1]{V2018}) proved that $\alpha$ is diagonal modulo $(\mathcal{J}_1,\ldots,\mathcal{J}_n)$ if and only if $k_{\varPhi}(\alpha)=0.$ The quasicentral modulus underlies many questions concerning perturbation of operators \cite{V1979, V1981, V1990, V1992,V2018}. We refer the reader to \cite{V2019} for a comprehensive survey about more use of quasicentral modulus.

The main aim of this paper is to deliver results generalizing and extending \cite[Corollary 2.6]{V1979} (see also \cite[Proposition 7.1]{V2018}) for general semifinite von Neumann algebras. A \emph{von Neumann algebra} $\M$ is a $\ast$-subalgebra of $B(H)$ such that $\M=\M''$, where $\M''$ is the  bicommutant of $\M.$  A von Neumann algebra $\M$ is called {\em semifinite} if there exists a faithful normal semifinite trace $\tau$ on $\M$ (see \cite{T1}). A {\em factor} is a von Neumann algebra with trivial centre. A von Neumann algebra $\M$ is \emph{$\sigma$-finite} if each orthogonal family of non-zero projections in $\M$ is countable. 

If $d=\sum_{k}\lambda_kp_k\in\M$ is a diagonal operator, then it can be written as $d=\sum_{k}\lambda_k' p_k'$ where $\{p_k'\}$ is a sequence pairwise orthogonal projections in $\M$ such that $\sum_{k}p_k'={\bf 1}$ and $\{\lambda_k'\}$ is a sequence of pairwise different complex numbers.

Let us briefly recall the definition of symmetric spaces associated to a semifinite von Neumann algebra, which is an analogue of the normed ideals in $B(H_0)$. Suppose $\M$ is a von Neumann algebra with a faithful normal semifinite trace $\tau$. Let $S(\tau)$ be the set of all $\tau$-measurable operators affiliated with $\M$ (see \cite{DPS2023, Fack1986}), and let $\mu(a)$ denote the singular value function of $a\in S(\tau)$ (specific definitions of $S(\tau)$ and $\mu(a)$ will be given below, in Section \ref{pre section}).

Let $E$ be a symmetric function space on $(0,\infty)$ with norm $\|\cdot\|_E$ (see Definition \ref{sym function space def}). Define $E(\M)=\{a\in S(\tau):\mu(a)\in E\},$ and define 
$$\|a\|_{E(\M)}=\|\mu(a)\|_{E},\quad a\in E(\M).$$
From \cite{KS2008} (see also \cite[Theorem 3.5.5]{LSZ2013}), $(E(\M),\|\cdot\|_{E(\M)})$ is a Banach space and is called a {\em symmetric space}. In particular, for $1\le p< \infty$, if $E=L_p$ is the standard Lebesgue $L_p$ function space on $(0,\infty)$, we obtain the classical noncommutative $L_p$ spaces $L_p(\M)$, see e.g. \cite{DPS2023, PX2003}. 
For convenience, we set $L_\infty(\M)=\M$ equipped with the uniform norm $\|\cdot\|_{\M}.$ If $E=L_{p,1}$ is the standard Lorentz-$(p,1)$ space on $(0,\infty)$, we obtain the noncommutative Lorentz space $L_{p,1}(\M)$, see e.g. \cite{DPS2023, Kosaki1981}. 

Let $\M$ be a $\sigma$-finite semifinite factor with the uniform norm $\|\cdot\|_{\M}$ and let $\mathcal{K}(\M)$ be the two-sided closed ideal generated by all finite projections in $\M$. In 1975, Zsido \cite{Zsido1975} showed that, for every self-adjoint operator $b$ in $\M$, there exists a diagonal operator $d$ in $\M$ such that $b-d\in \mathcal{K}(\M)$.
This result was further extended by Akemann and Pedersen \cite{Akemann1977}, where they proved that for $\e>0$, there is a diagonal operator $d\in\M$ such that $\|b-d\|_{\M}\le \e.$ Later, in 1978, Kaftal \cite{Kaftal1978} proved that, for every self-adjoint operator $b$ in $\M$ and $\e>0$, there exists a diagonal operator $d$ in $\M$ such that $\max\{\|b-d\|_{L_2(\M)},\|b-d\|_{\M}\}\le\e$.

Recently, Li et al. \cite{LSS2020} proved that if $\M$ is a semifinite von Neumann algebra acting on a separable Hilbert space with separable predual, then for every normal operator $b\in\M$ and every $\e>0,$ there is a diagonal operator $d\in \M$ such that $\max\{\|b-d\|_{L_2(\M)},\|b-d\|_{\M}\}\le \e$ (see \cite[Theorem 6.2.5]{LSS2020}).

When $\M$ is a $\sigma$-finite semifinite factor, Li et al. \cite[Theorem 6.1.2]{LSS2020} proved that if $n\ge2$, then for every commuting self-adjoint $n$-tuple $\alpha\in\M^n$ and every $\e>0$, there is a diagonal $n$-tuple $\delta\in\M^n$ such that $\|\alpha(j)-\delta(j)\|_{L_n(\M)}\le \e$ and $\|\alpha(j)-\delta(j)\|_{\M}\le \e$ for each $1\le j\le n.$

In this paper, we let $\M$ be a von Neumann algebra with a faithful normal semifinite trace $\tau$, acting on a (possibly non-separable) Hilbert space $H$, and consider the problem of the diagonality of every commuting self-adjoint $n$-tuple $(\alpha(j))_{j=1}^n\in\M^n$ modulo a given $n$-tuple of symmetric spaces associated with $\M$.

Let $\varPhi=(E_1,\ldots,E_n)$ be an $n$-tuple of symmetric function spaces on $(0,\infty)$. Denote
$$\varPhi(\M)=(E_1(\M),\ldots,E_n(\M)).$$
For every $n$-tuple $\beta=(\beta(j))_{j=1}^n\in \varPhi(\M),$ we write
$$\|\beta\|_{\varPhi(\M)}=\max_{1\le j\le n} \|\beta(j)\|_{E_j(\M)}.$$
In particular, if $\varPhi=E^n$, we also write $\|\beta\|_{\varPhi(\M)}$ as $\|\beta\|_{E(\M)}$.

Let $\alpha\in\M^n$ be a commuting $n$-tuple, we say that $\alpha$ is {\em diagonal modulo $\varPhi(\M)$} if there is a diagonal $n$-tuple $\delta\in \M^n$ such that $\alpha(j)-\delta(j)\in E_j(\M)$ for $1\le j\le n.$

We now recall the definition of quasicentral modulus $k_{\varPhi(\M)}(\alpha)$ in the semifinite von Neumann algebra setting (see \cite{BSZZ2023, V2021, V2022}). General properties of $k_{\varPhi(\M)}(\alpha)$ can be found in \cite{BSZZ2023}. For every $n$-tuple $\alpha\in \M^n$ and every $a\in\M,$ we write $[a,\alpha]=([a,\alpha(j)])_{j=1}^n.$
\begin{definition}\label{quasicentral modulus def}
Let $\varPhi=(E_1,\ldots,E_j)$ be an $n$-tuple of symmetric function spaces on $(0,\infty)$. For every $n$-tuple $\alpha\in \M^n$, we define the \emph{quasicentral modulus} $k_{\varPhi(\M)}(\alpha)$ of $\alpha$ by the formula (see Definition A.1 in \cite{BSZZ2023})
$$k_{\varPhi(\M)}(\alpha)= \sup_{a\in \mathcal{F}_1^+(\M)}\inf_{\stackrel{r\ge a}{r\in \mathcal{F}_1^+(\M)}}\|[r,\alpha]\|_{\varPhi(\M)}.$$
Here $\mathcal{F}_1^+(\M)=\{x\in\M:0\le x\le \mathbf{1},\tau(\mathfrak{l}(x))<\infty\}$, where $\mathfrak{l}(x)$ is the left support of $x$ (see Section \ref{pre section}). In particular, if $\varPhi=E^n$, we also write $k_{\varPhi(\M)}(\alpha)$ as $k_{E(\M)}(\alpha)$.
\end{definition}

This paper is a continuation of our previous investigations \cite{BSZZ2023} on the diagonality of commuting tuples of self-adjoint operators in a semifinite von Neumann algebra $\M$ modulo symmetric spaces associated with $\M$. Suppose $\M$ is an infinite $\sigma$-finite semifinite factor. We proved in \cite[Theorem 1.2]{BSZZ2023} that, $\alpha$ is diagonal modulo $\varPhi(\M)$ if and only if $k_{\varPhi(\M)}(\alpha)=0$. In this paper, we prove that the same assertion holds true for arbitrary $\sigma$-finite semifinite von Neumann algebras. Our techniques in this paper are quite different from that of \cite{BSZZ2023}.

The following theorem is the main result of this paper. It is an analogue of \cite[Corollary 2.6]{V1979}, and it extends \cite[Theorem 1.2]{BSZZ2023} to non-factors. Let $\mathcal{K}(\M,\tau)$ denote the two-sided closed ideal
in $\M$ generated by all $\tau$-finite projections. We write $\varPhi^0=(E_1^0,\ldots,E_n^0)$, where $E_j^0=\overline{L_1\cap L_\infty}^{\|\cdot\|_{E_j}}$ for each $1\le j\le n.$ We denote $\varPhi^0(\M)=(E_1^0(\M),\ldots,E_n^0(\M))$.

\begin{theorem}\label{main theorem} Let $\M$ be a $\sigma$-finite von Neumann algebra with a faithful normal semifinite trace $\tau$ and let $n\in \mathbb{N}$. Let $\varPhi=(E_1,\ldots,E_n)$ be an $n$-tuple of symmetric function spaces on $(0,\infty).$ Suppose $\alpha\in\M^n$ is a commuting self-adjoint $n$-tuple. The following statements are equivalent:
\begin{enumerate}[\rm (i)]
\item\label{mta} $k_{\varPhi(\M)}(\alpha)=0$.
\item\label{mtb} For every $\e>0,$ there exists a diagonal $n$-tuple $\delta\in \M^n$ such that $\alpha-\delta\in
\varPhi^0(\M)$ and $\|\alpha -\delta\|_{\varPhi(\M)}\le \e.$
\end{enumerate}
\end{theorem}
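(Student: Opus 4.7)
The plan is to establish $(\ref{mta})\Leftrightarrow(\ref{mtb})$ by bootstrapping from the factor case [BSZZ2023, Theorem 1.2] via a disintegration of $\M$ over its center.

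For the straightforward direction $(\ref{mtb})\Rightarrow(\ref{mta})$, I would first verify that $k_{\varPhi(\M)}(\delta)=0$ for every diagonal $n$-tuple $\delta\in\M^n$: writing $\delta(j)=\sum_k\lambda_k^{(j)}p_k$ with pairwise orthogonal projections summing to $\mathbf{1}$, any $a\in\mathcal{F}_1^+(\M)$ is essentially supported on finitely many of the $p_k$, and one can take $r=\sum_{k\in F}p_k\in\mathcal{F}_1^+(\M)$ with $r\ge a$ and $[r,\delta(j)]=0$. Combined with the Lipschitz-type bound $|k_{\varPhi(\M)}(\alpha)-k_{\varPhi(\M)}(\delta)|\le 2\|\alpha-\delta\|_{\varPhi(\M)}$, immediate from bilinearity of the commutator, this yields $k_{\varPhi(\M)}(\alpha)\le 2\e$ for every $\e>0$, so $k_{\varPhi(\M)}(\alpha)=0$.

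The substantive direction is $(\ref{mta})\Rightarrow(\ref{mtb})$. My strategy is to disintegrate $\M=\int_\Omega^\oplus\M_\zeta\,d\nu(\zeta)$ with $\M_\zeta$ semifinite factors, $\tau=\int^\oplus\tau_\zeta\,d\nu(\zeta)$, and $\alpha(j)=\int^\oplus\alpha(j)_\zeta\,d\nu(\zeta)$. I would first show that $k_{\varPhi(\M)}(\alpha)=0$ forces $k_{\varPhi(\M_\zeta)}(\alpha_\zeta)=0$ for $\nu$-a.e.\ $\zeta$, by contradiction: if positive quasicentral modulus persisted on a central projection $z$ of positive trace, a suitable $a\in\mathcal{F}_1^+(z\M)$ assembled fiberwise would obstruct the sup--inf in the ambient algebra. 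For a.e.\ $\zeta$, I would then produce a diagonal $\delta_\zeta\in\M_\zeta^n$ with $\alpha_\zeta-\delta_\zeta\in\varPhi^0(\M_\zeta)$ and $\|\alpha_\zeta-\delta_\zeta\|_{\varPhi(\M_\zeta)}\le\e$: for infinite semifinite factor fibers this is [BSZZ2023, Theorem 1.2], while finite-factor fibers (types $\mathrm{I}_k$ and $\mathrm{II}_1$) can be handled directly by simultaneously diagonalizing the abelian algebra generated by $\alpha_\zeta$, since there $\mathbf{1}\in\mathcal{F}_1^+(\M_\zeta)$ so the quasicentral modulus is automatically zero and joint spectral approximation suffices. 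Finally, I would assemble $\delta=\int^\oplus\delta_\zeta\,d\nu(\zeta)$ into a diagonal $n$-tuple in $\M^n$, inheriting the norm bound and the $\varPhi^0(\M)$-membership from the fiberwise estimates.

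The main obstacle will be the measurable assembly in the final step: the fiberwise diagonal approximations coming from [BSZZ2023, Theorem 1.2] are not obviously canonical, so one needs either a measurable-selection lemma to choose them compatibly in $\zeta$, or a direct construction that runs uniformly across fibers. A plausible workaround, and likely the technical heart of the paper, is to first reduce via a countable partition of unity in $Z(\M)$ to pieces on which the center is essentially homogeneous, perform the Voiculescu-type construction in parallel on each piece, and only then pass to the direct integral to glue everything together. A secondary technical point is to maintain the $\varPhi^0(\M)$-membership (not just the norm bound) throughout this gluing, which should follow from writing the diagonalization as a norm-convergent series in each fiber.
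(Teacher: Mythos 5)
Your reduction of the hard direction to the factor case via a direct integral $\M=\int_\Omega^\oplus\M_\zeta\,d\nu(\zeta)$ has two fatal problems. First, reduction theory over the centre is only available for von Neumann algebras with separable predual, whereas the theorem is stated (and is proved in the paper) for arbitrary $\sigma$-finite semifinite $\M$, possibly acting on a non-separable Hilbert space with non-separable predual; a $\sigma$-finite algebra (e.g.\ a II$_1$ factor coming from an uncountable ICC group) need not disintegrate into factors at all, so the very first step is unavailable in the stated generality. Second, and independently of separability, neither the quasicentral modulus nor the norms $\|\cdot\|_{\varPhi(\M)}$ localize to fibers: $\|x\|_{E(\M)}$ is computed from the singular value function with respect to the global trace and is not a function of the fiber norms $\|x_\zeta\|_{E(\M_\zeta)}$ for a general symmetric function space, so the claimed implication $k_{\varPhi(\M)}(\alpha)=0\Rightarrow k_{\varPhi(\M_\zeta)}(\alpha_\zeta)=0$ a.e.\ needs an argument you do not have, and the final assembly step is simply false as stated: fiberwise bounds $\|\alpha_\zeta-\delta_\zeta\|_{\varPhi(\M_\zeta)}\le\e$ do not give $\|\alpha-\delta\|_{\varPhi(\M)}\le\e$ (already for $E_j=L_1$ with $\nu$ infinite the glued error can have infinite norm). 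To make the gluing work you would have to distribute the error over a countable central partition with trace-weighted tolerances, which is essentially what the paper does for the \emph{finite} central summand (Lemma \ref{approximation in finite case}); but for the properly infinite summand the fibers cannot be cut down to $\tau$-finite pieces, and the paper instead avoids direct integrals entirely: it splits off the finite part by a central projection, and on the properly infinite part it builds a $\ast$-monomorphism $\psi$ from centre-valued homomorphisms (Construction \ref{section2 main construction}), proves a Voiculescu-type absorption theorem (Theorem \ref{key thm}) using the quasicentral hypothesis, and transports a diagonal model of $\psi(\alpha)$ back to $\alpha$ by a single unitary (Lemmas \ref{psialpha approximation lemma}--\ref{pre-final lemma}). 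Your acknowledged ``measurable selection'' worry is therefore not the main obstacle; the norm non-locality and the non-separable setting are.

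A smaller but real flaw is in your easy direction: for a diagonal tuple $\delta=\sum_k\lambda_k p_k$ the eigenprojections $p_k$ need not be $\tau$-finite (e.g.\ $\delta$ a scalar in an infinite algebra), so $r=\sum_{k\in F}p_k$ generally lies outside $\mathcal{F}_1^+(\M)$, and moreover a given $a\in\mathcal{F}_1^+(\M)$ need not be dominated by finitely many $p_k$. Thus your argument that $k_{\varPhi(\M)}(\delta)=0$ does not go through; the paper obtains this nontrivially (Lemma \ref{diagonal has zero k}, via $k_{L_1(\M)}(\delta)=0$ from \cite{BSZZ2023} and the passage to $L_1\cap L_\infty$ in Lemma \ref{quasicentral modulus on bounded part}). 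Your Lipschitz bound $k_{\varPhi(\M)}(\alpha)\le k_{\varPhi(\M)}(\delta)+2\|\alpha-\delta\|_{\varPhi(\M)}$ is fine, though note the paper also needs the perturbation invariance modulo $\varPhi^0(\M)$ to conclude exactly.
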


\begin{remark} If $k_{\varPhi(\M)}(\alpha)=0$, then $k_{(\varPhi\cap L_\infty)(\M)}(\alpha)=0$ (see Lemma \ref{quasicentral modulus on bounded part}), where $\varPhi\cap L_\infty=(E_1\cap L_\infty,\ldots,E_n\cap L_\infty)$. Thus, by Theorem \ref{main theorem}, for every $\e>0,$ there is a diagonal $n$-tuple $\delta\in\M^n$ such that $\alpha-\delta\in \varPhi^0(\M)$ and $$\max\{\|\alpha-\delta\|_{\varPhi(\M)},\|\alpha-\delta\|_{\M}\}\le \e.$$
\end{remark}

In the case when $\M=B(H_0)$ where $H_0$ is a separable Hilbert space, Theorem \ref{main theorem} recovers \cite[Corollary 2.6]{V1979}  and the classical Weyl-von Neumann's theorem \cite{von1935, Weyl1909}.

A combination of Theorem \ref{main theorem} with \cite[Lemma 4.3.3]{LSS2020} yields that if $n\ge2$, then for every commuting self-adjoint $n$-tuple $\alpha\in\M^n$ and every $\e>0$, there is a diagonal $n$-tuple $\delta\in\M^n$, such that $\|\alpha(j)-\delta(j)\|_{L_n(\M)}\le \e$ and $\|\alpha(j)-\delta(j)\|_{\M}\le \e$ for $1\le j\le n$. This extends \cite[Theorem 6.1.2]{LSS2020} to non-factors.

Note that for any normal operator $b\in \M$, $\alpha=({\rm Re}(b),{\rm Im}(b))$ is a commuting self-adjoint $2$-tuple. Then the above paragraph implies that for every $\e>0,$ there is a diagonal operator $d\in \M$ such that $\max\{\|b-d\|_{L_2(\M)},\|b-d\|_{\M}\}\le\e$. This extends \cite[Theorem 6.2.5]{LSS2020} to von Neumann algebras acting on a non-separable Hilbert space with non-separable predual. Our techniques are quite different to that of \cite{LSS2020}.

%

The proof of Theorem \ref{main theorem} is based on an analogue (see Theorem \ref{key thm}) of Voiculescu's approximate homomorphisms theorem \cite[Corollary 2.5]{V1979}, which is of independent interest. Before stating this result, let us recall the notion of approximate equivalence of two representations $\psi,\rho$ of a $C^\ast$-algebra $\mathcal{A}$ when the ranges of $\psi$ and $\rho$ are contained in a von Neumann algebra $\M$.

Suppose $\mathcal{A}$ is a unital $C^\ast$-algebra and $\M$ is a von Neumann algebra. Let $\psi,\rho:\mathcal{A}\to \M$ be two unital $\ast$-homomorphisms. We say that $\psi$ and $\rho$ are {\em approximately equivalent in $\M$} if there is a sequence of unitaries $\{u_m\}_{m\in \mathbb{N}}$ in $\M$ such that 
$$\psi(a)=\lim_{m\to\infty} u_m^{-1}\rho(a)u_m,\quad a\in \mathcal{A},$$
and we write $\psi\sim_{\M} \rho.$ In the special case when $\M=B(H)$ and when $\mathcal{A}$ and $H$ are both separable, Voiculescu \cite{V1976} gave a beautiful characterization of approximate equivalence (see also \cite{Arveson1977} for an exposition of Voiculescu's theorem). Hadwin \cite{Hadwin1981} (see also \cite{Davidson1996}) proved that Voiculescu's characterization could be formulated in terms of the rank function; more precisely, $\psi\sim_{B(H)}\rho$ if and only if ${\rm rank}(\psi(a))={\rm rank}(\rho(a))$ for all $a\in \mathcal{A}$. Hadwin \cite{Hadwin1981} also proved that the rank characterization holds when $\mathcal{A}$ or $H$ are non-separable. For a general von Neumann algebra $\M$ acting on a separable Hilbert space and for a unital commutative $C^\ast$-algebra $\mathcal{A}$, Ding and Hadwin \cite[Theorem 3]{Ding2005} proved that $\psi\sim_{\M}\rho$ if and only if $\mathfrak{l}(\psi(a))\sim \mathfrak{l}(\rho(a))$ for all $a\in \mathcal{A}$, where $\mathfrak{l}(x)$ represents the left support projection (see Section \ref{pre section}) of $x\in\M$.

Let $\mathcal{Z}(\M)$ denote the centre of $\M.$ For a subset $A\subset \M$, let $C^\ast(A)$ (respectively, $W^\ast(A)$) denote the $C^\ast$-subalgebra (respectively, von Neumann subalgebra) generated by $A$ and the identity $\mathbf{1}$. Denote by $WZ^\ast(A)$ the von Neumann subalgebra of $\M$ generated by $W^\ast(A)\cup \mathcal{Z}(\M)$. Let $\mathcal{P}_f(\M)$ denote the set of all finite projections in $\M$ (see Section \ref{pre section}).

The following result shows that, for every $\ast$-monomorphism $\psi:C^\ast(\alpha)\to\M$ which satisfies $\psi\sim_{\M}{\rm Id}_{C^{\ast}(\alpha)},$ $\psi(\alpha)$ can be unitarily approximated by $\alpha$ in the sense of the norm $\|\cdot\|_{\varPhi(\M)}$ where $\varPhi$ is a given $n$-tuple of symmetric function spaces on $(0,\infty)$. This is an analogue of Corollary 2.5 in \cite{V1979}.

\begin{theorem}\label{key thm} Let $\M$ be a properly infinite $\sigma$-finite von Neumann algebra with a faithful normal semifinite trace $\tau$. Suppose $\varPhi=(E_1,\ldots,E_n)$ is an $n$-tuple of symmetric function spaces on $(0,\infty)$. Let $\alpha\in\M^n$ be a commuting self-adjoint $n$-tuple and let $\psi:C^{\ast}(\alpha)\to\M$ be a $\ast$-monomorphism. Suppose that
\begin{enumerate}[\rm (i)]
\item\label{assum1} $WZ^{\ast}(\alpha)\cap \mathcal{P}_f(\M)=WZ^{\ast}(\psi(\alpha))\cap \mathcal{P}_f(\M)=\{0\};$
\item\label{assum2} $\psi\sim_{\M} {\rm Id}_{C^{\ast}(\alpha)};$
\item\label{assum3} $k_{\varPhi(\M)}(\alpha)=k_{\varPhi(\M)}(\psi(\alpha))=0.$ 
\end{enumerate}
For every $\e>0$ there exists a unitary $u\in\M$ such that $u\psi(\alpha)-\alpha u\in \varPhi^0(\M)$ and
$$\|u\psi(\alpha)-\alpha u\|_{\varPhi(\M)}\leq\varepsilon.$$
\end{theorem}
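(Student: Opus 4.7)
The plan is to adapt Voiculescu's classical Berg-type argument to the properly infinite $\sigma$-finite von Neumann algebra setting, using the vanishing quasicentral moduli as the main quantitative input and the approximate equivalence hypothesis plus the absence of finite projections in $WZ^\ast(\alpha),WZ^\ast(\psi(\alpha))$ as the structural inputs that allow local intertwiners to be built.

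First, I would set up a joint ``quasicentral approximate unit''. Since $\M$ is $\sigma$-finite and $\tau$ is semifinite, there exists a sequence $(a_m)\subset \mathcal{F}_1^+(\M)$ increasing to $\mathbf{1}$ in the strong operator topology. Using assumption (iii) for both $\alpha$ and $\psi(\alpha)$, together with a diagonal argument, I would extract an increasing sequence $(r_k)\subset \mathcal{F}_1^+(\M)$ such that $r_k\nearrow\mathbf{1}$ strongly and $\max\{\|[r_k,\alpha]\|_{\varPhi(\M)},\|[r_k,\psi(\alpha)]\|_{\varPhi(\M)}\}<2^{-k}\varepsilon$. The definition of $E_j^0$ then guarantees that such rapidly decaying sums of bounded-rank pieces automatically land in $\varPhi^0(\M)$.

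Second, I would build a partition of $\mathbf{1}$ into ``bumps''. A standard choice is $\Delta_k = f_k(r)$ for a suitable smooth partition of unity on $[0,1]$ applied to a single quasicentral element $r$, or the classical telescoping $r_{k+1}-r_k$; in either case the bumps $\Delta_k$ satisfy $\sum_k\Delta_k^2=\mathbf{1}$ strongly, each $\Delta_k$ has $\tau$-finite left support, and the commutator estimates pass to $\Delta_k$. Then, on each piece $p_k:=\mathfrak{l}(\Delta_k)$, I would use assumption (ii) to approximate $\psi(\alpha)p_k$ by $u_m^{-1}\alpha u_m\, p_k$ for large $m=m(k)$. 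The crucial point is that hypothesis (i), combined with the Ding--Hadwin type characterization of $\sim_{\M}$ via left-support equivalence (referenced in the introduction), implies that the relevant spectral projections of $\alpha$ and $\psi(\alpha)$ are Murray--von Neumann equivalent inside $\M$ on each central slice; hence, on each $p_k$, one can realize the approximate intertwining by an honest partial isometry $v_k\in\M$ with matching initial and final supports, modulo an error whose $\varPhi(\M)$-norm is at most $2^{-k}\varepsilon$.

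Third, I would assemble the local $v_k$'s into a single unitary $u\in\M$. The properly infinite $\sigma$-finite hypothesis lets us choose the source and range projections of the $v_k$'s to form orthogonal families that sum to $\mathbf{1}$ (after enlarging source/ranges within each piece using the halving lemma if necessary); the resulting $u=\sum_k v_k$ converges strongly to a unitary. Evaluating $u\psi(\alpha)-\alpha u$ piecewise, each term is controlled either by the commutator $[\Delta_k,\alpha]$ (coming from the bump structure) or by the local intertwining error, both summably bounded by $2^{-k}\varepsilon$. This forces the total difference to lie in $\varPhi^0(\M)$ with $\varPhi(\M)$-norm at most $C\varepsilon$; replacing $\varepsilon$ by $\varepsilon/C$ upstream completes the proof.

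The main obstacle I anticipate is the third paragraph's assembly step, specifically ensuring that the local partial isometries $v_k$ can be chosen with exactly matching orthogonal supports that exhaust $\mathbf{1}$, rather than only up to small errors. This is where the \emph{joint} use of hypotheses (i) and (ii) is essential: (i) rules out any finite ``obstruction'' projection in the von Neumann algebras generated by $\alpha$ or $\psi(\alpha)$ together with the centre, which would otherwise force non-equivalence of central-fibre projections; and (ii) provides the candidate intertwiners to perturb into exact partial isometries. Handling the central part carefully, and absorbing the remainders into the next bump without breaking the summable error estimates, will be the delicate bookkeeping in the argument.
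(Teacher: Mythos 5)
Your plan correctly identifies the three quantitative inputs (quasicentral approximate units from (iii), unitaries from (ii), orthogonal pieces from proper infiniteness plus (i)), but the step you yourself flag as delicate --- assembling local partial isometries $v_k$ ``with matching initial and final supports'' into a single unitary $u=\sum_k v_k$ --- is not bookkeeping; it is the core of the theorem, and as described it does not go through. Cutting an approximate intertwiner $u_{m(k)}$ by a bump or by $p_k=\mathfrak{l}(\Delta_k)$ does not yield a partial isometry, and there is no reason the ranges of such local pieces can be made pairwise orthogonal while preserving the intertwining estimates; the appeal to the Ding--Hadwin left-support characterization does not help here (in the paper it is only quoted for $\M$ on a separable Hilbert space, and it is never used in the proof). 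What is actually needed, and what the paper proves as Theorem \ref{isometries commutator vanishing thm} using hypothesis (i) (via Lemmas \ref{infinite projections lemma}--\ref{induction lemma}), is an infinite family of isometries $w_{m,l}$ with $w_{m_1,l_1}^{\ast}w_{m_2,l_2}=\delta_{m_1,m_2}\delta_{l_1,l_2}\mathbf{1}$ which almost commute with $\alpha$ in the uniform norm; multiplying $u_{m,l}e_{m,l}$ by these and summing produces isometries $v_l=\sum_m w_{m,l}u_{m,l}e_{m,l}$ with mutually orthogonal ranges satisfying $\|v_l\psi(\alpha)-\alpha v_l\|_{\varPhi(\M)}\le 2^{-l}$ (Theorem \ref{pre thm 2.4}). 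Orthogonality of ranges is obtained by construction, not by matching supports of cut-down unitaries.

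Even granting your local $v_k$'s, the sum would at best be an isometry, not a unitary: you have no mechanism forcing the ranges to exhaust $\mathbf{1}$, and the halving lemma cannot enlarge ranges without destroying the intertwining property. The paper resolves exactly this by a Berg--Voiculescu absorption: from the $v_l$'s one builds the shift-type isometry $w_k=(\mathbf{1}-q_k+t_k)\otimes{\bf E}_{11}+v_k\otimes{\bf E}_{12}$ in $\M\otimes M_2(\mathbb{C})$ with $w_kw_k^{\ast}=\mathbf{1}\otimes{\bf E}_{11}$ (Lemma \ref{xk properties}, Theorem \ref{thm 2.4}), which absorbs $\psi(\alpha)$ into $\alpha$ up to small $\varPhi$-norm error; then one applies the same one-sided absorption to the pair $(\psi(\alpha),\psi^{-1})$ --- this is where the hypotheses $WZ^{\ast}(\psi(\alpha))\cap\mathcal{P}_f(\M)=\{0\}$ and $k_{\varPhi(\M)}(\psi(\alpha))=0$ are genuinely used, whereas your outline never exploits them at this stage --- and composes the two isometries, $y=w_1Uw_2^{\ast}$ with $U=\mathbf{1}\otimes({\bf E}_{12}+{\bf E}_{21})$, to obtain the unitary $u$. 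Without the orthogonal-range isometries of Theorem \ref{isometries commutator vanishing thm}, the $M_2$ absorption, and this symmetrization, your assembly step has no proof, so the proposal as written has a genuine gap precisely where the paper's argument does its real work.
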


In the special case when $\M=B(H_0)$, Theorem \ref{key thm} is a slightly weaker result comparing to \cite[Corollary 2.5]{V1979} since the condition \eqref{assum1} amounts to say that $W^\ast(\alpha)\cap \mathcal{K}=W^\ast(\psi(\alpha))\cap \mathcal{K}=\{0\}$, which is stronger than the original condition that $C^\ast(\alpha)\cap \mathcal{K}=C^\ast(\psi(\alpha))\cap \mathcal{K}=\{0\}$ in \cite[Corollary 2.5]{V1979}, where $\mathcal{K}$ is the ideal of compact operators in $B(H_0)$. In this case, the condition \eqref{assum2} is redundant. Indeed, by \cite[Corollary 1.4]{V1976}, the assumption that $\psi$ is injective and the condition \eqref{assum1} imply automatically that $\psi\sim_{B(H_0)}{\rm Id}_{C^{\ast}(\alpha)}$.

Section \ref{pre section} contains some preliminaries. Section \ref{qm sec} contains some useful properties of quasicentral modulus. In Section \ref{construction section} we construct a $\ast$-monomorphism $\psi:C^\ast(\alpha)\to\M$ such that $\psi\sim_{\M}{\rm Id}_{C^{\ast}(\alpha)}$ and such that $\psi(\alpha)$ can be approximated by diagonal $n$-tuples in the sense of any norm $\|\cdot\|_{\varPhi(\M)}$ where $\varPhi$ is an $n$-tuple of symmetric function spaces on $(0,\infty)$. Section \ref{technical section} contains a technical result for the proof of Theorem \ref{key thm}. Section \ref{V homomorphism section} contains the proof of Theorem \ref{key thm}. Finally, using Theorem \ref{key thm} and the construction in Section \ref{construction section}, we prove Theorem \ref{main theorem} in Section \ref{final section}.

\section{Preliminaries}\label{pre section}

In this section, we recall some notions in the von Neumann algebra theory, and define the symmetric spaces associated with a semifinite von Neumann algebra.

In what follows, $H$ is a Hilbert space and $B(H)$ is the $\ast$-algebra of all bounded linear operators on $H$, and $\mathbf{1}$ is the identity operator on $H$. Suppose $\mathcal{A}$ is a unital $\ast$-algebra on $H$. Recall that $\mathcal{A}$ is called a von Neumann algebra if $\mathcal{A}=\mathcal{A}''$ where $\mathcal{A}''$ is the  bicommutant of $\mathcal{A}$. From the famous von Neumann bicommutant theorem (see e.g. \cite[Theorem 3.2]{Stratila2019}), the closure of $\mathcal{A}$ in the strong operator topology (briefly, $so$-topology), denoted by $\overline{\mathcal{A}}^{so}$, coincides with the bicommutant $\mathcal{A}''$ of $\mathcal{A}$. This algebra is the von Neumann algebra generated by $\mathcal{A}$. 

For an operator $x\in B(H)$, we denote by $\mathfrak{l}(x)$ the projection onto the subspace $\overline{x(H)}$, which is called the \textit{left support} of $x$, i.e.\ $\mathfrak{l}(x)$ is the smallest projection $e\in B(H)$ for which $ex=x$. We denote by $\mathfrak{r}(x)$ the projection onto the subspace $(\ker x)^\perp$, which is called the \textit{right support} of $x$, i.e.\ $\mathfrak{r}(x)$ is the smallest projection $e\in B(H)$ for which $xe=x$. 

Let $\M$ be a von Neumann algebra acting on $H$. The centre of $\M$ is denoted by $\mathcal{Z}(\M)$, namely, $\mathcal{Z}(\M)=\M\cap \M'.$ We let $\mathcal{P}(\M)$ denote the set of all projections in $\M.$ For two projections $p,q\in \mathcal{P}(\M)$, we say that $p\sim q$ if there is a partial isometry $v\in\M$ such that $v^\ast v=p$ and that $vv^\ast=q$ (see \cite[Section V.1]{T1}).  A projection $p\in \mathcal{P}(\M)$ is said to be {\em finite} if $p\sim q\le p$ implies $p=q.$ Otherwise, it is said to be {\em infinite}. Let $\mathcal{P}_f(\M)$ denote the set of all finite projections in $\M$. A projection $p\in \mathcal{P}(\M)$ is called {\em properly infinite} if $zp$ is infinite for every central projection $z\in \M$ with $zp\ne0$ (see \cite[Definition V.1.15]{T1}). A von Neumann algebra is called {\em finite} (respectively, {\em properly infinite}) if the identity operator $\mathbf{1}$ is finite (respectively, properly infinite).

Suppose $\{x_i\}_{i\in I}$ is an increasing net of self-adjoint operators in $\M$ and suppose $x\in \M.$ We write $x_i\uparrow x$ if $x$ is the least upper bound of $\{x_i\}_{i\in I}$ in $\M.$

We let $c(a)$ denote the {\em central support} of $a\in\M$, namely, 
$$c(a)=\wedge \{z:z\in \mathcal{P}(\mathcal{Z}(\M)),\ za=a\}.$$

\begin{lemma}\label{equivalent projections lem}
Let $\M$ be a $\sigma$-finite von Neumann algebra and let $p\in \M$ be a properly infinite projection. If $r<p$ is a finite projection in $\M$, then $p-r\sim p.$
\end{lemma}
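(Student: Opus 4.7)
The plan is to reduce to the standard classification of properly infinite $\sigma$-finite projections in a von Neumann algebra: any two properly infinite projections which are both $\sigma$-finite and have the same central support are Murray--von Neumann equivalent (see, e.g., Takesaki, Volume I, Proposition V.1.38, or Kadison--Ringrose, Theorem 6.3.8). Since $\M$ itself is $\sigma$-finite, every projection in $\M$ is automatically $\sigma$-finite, so it suffices to verify two things: $p-r$ is properly infinite, and $c(p-r)=c(p)$.

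For proper infiniteness of $p-r$, I would take a central projection $z \in \mathcal{P}(\mathcal{Z}(\M))$ with $z(p-r) \neq 0$, and show $z(p-r)$ is infinite. If instead $z(p-r)$ were finite, then $zp = z(p-r) + zr$ would be an orthogonal sum of two finite projections and hence finite (the classical Murray--von Neumann fact that a finite orthogonal sum of finite projections is finite); moreover $zp$ would be nonzero since $0 \neq z(p-r) \leq zp$. This contradicts proper infiniteness of $p$.

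For the central support equality, $c(p-r) \leq c(p)$ is immediate from $p-r \leq p$. For the reverse direction, set $z := \mathbf{1} - c(p-r)$, the largest central projection annihilating $p-r$. Then $zp = z(p-r) + zr = zr \leq r$, so $zp$ is finite; proper infiniteness of $p$ forces $zp = 0$, giving $z \leq \mathbf{1} - c(p)$ and hence $c(p-r) \geq c(p)$.

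The main subtlety lies only in correctly invoking the above classification theorem; the verifications of proper infiniteness and of the central support equality are then straightforward. If one preferred to avoid citing this theorem, one could instead use $\sigma$-finiteness to write $p$ as a countable orthogonal sum $\sum_{n \geq 1} q_n$ with pairwise orthogonal $q_n \sim p$, embed $r$ into $q_1$ via the usual finite-versus-properly-infinite comparison (using $c(r) \leq c(p) = c(q_1)$), apply the Hilbert-hotel shift $q_n \mapsto q_{n+1}$ to obtain $p \sim p - q_1 \leq p - r'$ for a suitable $r' \sim r$ with $r' \leq q_1$, and then conclude by the Schr\"oder--Bernstein theorem for projections together with a Kaplansky parallelogram argument exchanging $r'$ and $r$; but the classification route is shorter and cleaner.
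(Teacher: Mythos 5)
Your proposal is correct and follows essentially the same route as the paper: verify that $p-r$ is properly infinite, verify $c(p-r)=c(p)$ (both via the same elementary central-projection arguments, up to trivial rephrasing), and then invoke the classification of properly infinite projections with equal central supports in a countably decomposable algebra (the paper cites \cite[Corollary 6.3.5]{Kadison1997_2} for exactly this step). The alternative ``Hilbert hotel'' sketch you mention is not needed and is not what the paper does.
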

\begin{proof} For every central projection $z\in \M,$ if $z(p-r)\ne0$ then obviously $zp\ne0.$ Thus, $zp$ is infinite by the definition of properly infinite projection. Since  $zr$ is finite, it follows that $z(p-r)$ is infinite. Hence, $p-r$ is properly infinite.

If $c(p-r)<c(p),$ then $z:=c(p)-c(p-r)\ne0$. We have 
$$z(p-r)=z\cdot c(p-r)(p-r)=0.$$
Clearly, $z\leq c(p)$. Thus, $zp\ne0$ (otherwise $z=zc(p)=c(zp)=0$, which is a contradiction). Now, $0\ne zp=zr$. Hence, $zp$ is non-zero and is finite, which is impossible since $p$ is properly infinite. Therefore, $c(p-r)=c(p).$

Note that $\M$ is $\sigma$-finite. By \cite[Corollary 6.3.5]{Kadison1997_2} we infer that $p-r\sim p.$
\end{proof}

\begin{lemma}\label{finite and tau finite} Let $\M$ be a von Neumann algebra with a faithful normal semifinite trace $\tau$. If $f$ is a non-zero finite projection in $\M$, then 
\begin{enumerate}[\rm (i)]
\item\label{ftfa} there exists $z\in \mathcal{P}(\mathcal{Z}(\M))$ such that $0\ne zf$ is $\tau$-finite;
\item\label{ftfb} there exists central partition of the identity $\{z_i\}_{i\in I}$ in $\M$ such that $z_if$ is $\tau$-finite for each $i\in I$.
\end{enumerate}
\end{lemma}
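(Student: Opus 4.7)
The plan is to prove (\ref{ftfa}) first and then deduce (\ref{ftfb}) by a Zorn's lemma argument.

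For (\ref{ftfa}), I will exploit the semifiniteness of $\tau$ to produce a $\tau$-finite projection $e\in\M$ with $ef\ne 0$; this is immediate since the $\tau$-finite projections form a net increasing to $\mathbf{1}$. Then $fef\in (f\M f)_+$ is a non-zero positive element with $\tau(fef)=\tau(efe)\le\tau(e)<\infty$. The key observation is that, since $f$ is a finite projection, $f\M f$ is a finite von Neumann algebra and thus carries a faithful normal centre-valued trace $T:f\M f\to \mathcal{Z}(f\M f)=f\mathcal{Z}(\M)$ satisfying $\tau\circ T=\tau$ on the finite-trace positive part. Applying $T$ to $fef$ produces a non-zero positive element $T(fef)\in f\mathcal{Z}(\M)$ of finite $\tau$-value.

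Next, I will choose $\e>0$ small enough that the spectral projection $p:=\chi_{[\e,\infty)}(T(fef))$ is non-zero; the bound $\e p\le T(fef)$ then forces $\tau(p)<\infty$. Since $T(fef)\in f\mathcal{Z}(\M)$, the projection $p$ itself lies in $f\mathcal{Z}(\M)$. Using the fact that the map $z\mapsto zf$ restricts to a bijection between projections in $c(f)\mathcal{Z}(\M)$ and projections in $f\mathcal{Z}(\M)$, I can write $p=zf$ for a unique projection $z\in\mathcal{P}(\mathcal{Z}(\M))$ with $z\le c(f)$; this $z$ is the desired central projection, since $zf=p\ne 0$ and $\tau(zf)<\infty$.

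For (\ref{ftfb}), I will apply Zorn's lemma to the collection of mutually orthogonal families $\{z_i\}_{i\in I}$ of central projections such that every $z_i f$ is $\tau$-finite. Let $\{z_i\}_{i\in I}$ be maximal and set $z_\infty=\mathbf{1}-\sum_{i\in I}z_i$, which is again central. If $z_\infty f=0$, then $\{z_i\}_{i\in I}\cup\{z_\infty\}$ already partitions the identity. Otherwise $z_\infty f$ is a non-zero finite projection in the semifinite von Neumann algebra $z_\infty\M$, and applying (\ref{ftfa}) inside $z_\infty\M$ yields a non-zero central projection $z'\le z_\infty$ with $z'f$ $\tau$-finite, contradicting maximality.

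The principal obstacle I anticipate lies in (\ref{ftfa}): the trace $\tau$ on $\M$ is only semifinite, so there is no centre-valued trace on $\M$ itself; one must descend to the finite corner $f\M f$ to obtain one, and then justify the identification $\mathcal{Z}(f\M f)=f\mathcal{Z}(\M)$ together with the compatibility $\tau=(\tau|_{\mathcal{Z}(f\M f)})\circ T$ for $\tau$ restricted to $f\M f$. Once these standard facts are granted, the rest of the argument reduces to elementary spectral truncation in the abelian algebra $f\mathcal{Z}(\M)$ followed by a routine transfinite induction.
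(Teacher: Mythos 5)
Your proof is correct, and part (ii) is essentially the same Zorn's-lemma argument as the paper's, but part (i) follows a genuinely different route. The paper's argument is purely comparison-theoretic: starting from a non-zero $\tau$-finite subprojection $q\le f$, it takes a maximal pairwise-orthogonal family $\{q_i\}_{i\in I}$ of subprojections of $f$ each equivalent to $q$, shows $I$ must be finite (two disjoint infinite subfamilies would make $f$ dominate an infinite projection), and applies the comparison theorem to $f-\sum_iq_i$ and $q$ to extract the central $z$; maximality then forces $zf\ne 0$. You instead pass to the finite corner $f\M f$, invoke its faithful normal centre-valued trace $T$, and use the compatibility $\tau\circ T=\tau$ on the finite-trace part of $(f\M f)_+$ together with $\mathcal{Z}(f\M f)=\mathcal{Z}(\M)f$ to produce a non-zero $\tau$-finite central projection by spectral truncation of $T(fef)$. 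Both arguments are valid. Yours is shorter and more conceptual once the structure theory of finite von Neumann algebras is granted, while the paper's is more elementary and self-contained, using nothing beyond the comparison theorem for projections. The delicate points you flag---the identification $\mathcal{Z}(f\M f)=\mathcal{Z}(\M)f$ and the factorization of $\tau|_{f\M f}$ through $T$---are indeed standard, but they carry real content and would deserve explicit citations in a final write-up.
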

\begin{proof}  (i) Since the trace $\tau$ is semifinite, there exists a non-zero $\tau$-finite projection $q\in \M$, such that $q\le f$.

By Zorn's lemma there exists a maximal family $\Theta=\{q_i\}_{i\in I}$ of pairwise orthogonal projections in $\M$ such that $q_i\sim q$ and $q_i\leq f$ for every $i\in I.$ 

We claim that $I$ is a finite set. To see the claim, we assume on the contrary that $I$ is infinite. Let $I_1,I_2\subset I$ be disjoint countably infinite subsets. Set
$$e_1=\vee_{i\in I_1}q_i,\quad e_2=\vee_{i\in I_2}q_i,\quad e_3=\vee_{i\in I_1\cup I_2}q_i.$$
Clearly, $e_1,e_2< e_3,$ $e_1\sim e_3,$ and $e_2\sim e_3.$ Thus, $e_3$ is infinite. Since $e_3\leq f,$ it follows that $f$ is infinite. This contradiction proves the claim.

Let
$$e=\sum_{i\in I}q_i.$$
Since $I$ is a finite set and since each $q_i$ is $\tau$-finite, it follows that $e$ is $\tau$-finite too.

By \cite[Theorem 6.2.7]{Kadison1997_2} there exists a projection $z\in \mathcal{P}(\mathcal{Z}(\M))$, such that
\begin{equation}\label{projections comparing}
z(f-e)\precsim zq,\ \text{ and }(\mathbf{1}-z)q\precsim (\mathbf{1}-z)(f-e).
\end{equation}

Suppose that $zf=0.$ Hence, $z(f-e)=0$ and $zq=zqf=qzf=0.$ Therefore, by the second inequality in \eqref{projections comparing}, $q\precsim f-e.$ That is, there exists $q_0\sim q$ such that $q_0\le f-e.$ This contradicts the maximality of $\Theta.$ Thus, $zf\neq 0.$

Since $zq$ is $\tau$-finite, it follows from the first inequality in \eqref{projections comparing} that $z(f-e)$ is $\tau$-finite. Since $e$ is $\tau$-finite, it follows that $ze$ is $\tau$-finite. Thus, $zf$ is $\tau$-finite.

(ii) By Zorn's lemma and part (i) there exists a maximal family $\Gamma=\{z_i\}_{i\in I}$ of pairwise orthogonal central projections such that $0\le\tau(z_if)<\infty$.

Let $z:=\sum_{i\in I}z_i$. Suppose $z<\mathbf{1}$. If $(\mathbf{1}-z)f=0$, we set $z'=(\mathbf{1}-z)$. If $(\mathbf{1}-z)f\ne0$, then part (i) implies that there exists $z'\in \mathcal{P}(\mathcal{Z}(\M))$ such that
 $0<\tau(z'({\bf 1}-z)f)<\infty$. In both cases, $z'':=z'(\mathbf{1}-z)\le \mathbf{1}-z$ is non-zero and $0\le\tau(z''f)<\infty$. However, this contradicts the maximality of $\Gamma$. Therefore, $z=\mathbf{1}$. Thus, $\Gamma$ is the required central partition of the identity.
\end{proof}

The following lemma is simple, and thus we omit the proof.

\begin{lemma}\label{left support lemma} Let $\{p_k\}_{k=1}^m$ be a family of pairwise orthogonal projections in $B(H)$ and let $q\in B(H)$ be a projection. We have 
$$\mathfrak{l}((\sum_{1\le k\le m}p_k) q)\leq\bigvee_{1\le k\le m}\mathfrak{l}(p_kq).$$
\end{lemma}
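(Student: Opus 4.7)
The plan is to work directly from the definition: the left support of an operator $x \in B(H)$ is the orthogonal projection onto the closed subspace $\overline{x(H)}$, and equivalently it is the smallest projection $e$ satisfying $ex = x$. So to establish the inequality it suffices to show that $(\sum_{k} p_k) q$ is left-absorbed by the projection $P := \bigvee_{k} \mathfrak{l}(p_k q)$, i.e.\ that $P \cdot (\sum_{k} p_k) q = (\sum_{k} p_k) q$.

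First I would note that since the $p_k$ are pairwise orthogonal and each $\mathfrak{l}(p_k q) \leq p_k$ (because $p_k q(H) \subseteq p_k(H)$, and closure preserves this containment), the projections $\{\mathfrak{l}(p_k q)\}_{k=1}^m$ themselves are pairwise orthogonal. Therefore their supremum is simply the sum:
\[
P = \bigvee_{k=1}^m \mathfrak{l}(p_k q) = \sum_{k=1}^m \mathfrak{l}(p_k q).
\]

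Next, for any $v \in H$, one has
\[
\Bigl(\sum_{k=1}^m p_k\Bigr) q v = \sum_{k=1}^m p_k q v,
\]
and since $p_k q v \in p_k q (H) \subseteq \mathfrak{l}(p_k q) H$, each summand is fixed by $\mathfrak{l}(p_k q)$, and hence by $P$. Thus $P \cdot (\sum_k p_k) q v = (\sum_k p_k) q v$ for every $v$, which gives $P \cdot (\sum_k p_k) q = (\sum_k p_k) q$. By minimality of the left support this yields $\mathfrak{l}((\sum_k p_k) q) \leq P$, which is the claim.

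There is essentially no obstacle here — the argument is a one-line consequence of the definition together with the orthogonality of the $p_k$. The only mild subtlety is observing that the supremum of the $\mathfrak{l}(p_k q)$ reduces to a finite sum of orthogonal projections, which lets one avoid any closure argument about the join of projections.
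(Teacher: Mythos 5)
Your proof is correct; the paper omits the proof of this lemma as ``simple'', and your argument is precisely the intended elementary one (verify that $P:=\bigvee_k\mathfrak{l}(p_kq)$ fixes every vector in the range of $(\sum_k p_k)q$ and invoke minimality of the left support). One small remark: the reduction of $\bigvee_{k}\mathfrak{l}(p_kq)$ to an orthogonal sum, while correct, is not actually needed --- each summand $p_kqv$ lies in the range of $\mathfrak{l}(p_kq)$, which is contained in the range of $P$ simply by definition of the join, so $P$ fixes it regardless.
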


\subsection{$\tau$-measurable operators}\label{measurable subsec} Let $H$ be a Hilbert space. Let $\M$ be a von Neumann algebra acting on $H$ with a faithful normal semifinite trace $\tau$ and the uniform norm $\|\cdot\|_{\M}$. A linear operator $a:{\rm dom}(a)\to H$ is said to be {\em affiliated with $\M$} if $ba\subset ab$ for all $b$ from the commutant $\M'$ of $\M,$ and the collection of all operators affiliated with $\M$ is denoted by ${\rm Aff}(\M).$ Let $\mathcal{F}(\M,\tau)=\{x\in\M:\tau(\mathfrak{l}(x))<\infty\}.$ Let $\mathcal{P}_f(\M,\tau)$ denote the set of all $\tau$-finite projections in $\M.$ Every $\tau$-finite projection is finite, i.e., $\mathcal{P}_f(\M,\tau)\subset \mathcal{P}_f(\M)$ (see e.g. \cite{T1}). Recall that $\mathcal{K}(\M,\tau)$ is the two-sided closed ideal in $\M$ generated by all $\tau$-finite projections, i.e., $\mathcal{K}(\M,\tau)=\overline{\mathcal{F}(\M,\tau)}^{\|\cdot\|_{\M}}$ (see \cite{Kaftal1977}).

Let $x$ be a self-adjoint operator on $H$. The spectral measure of $x$ is denoted by $e^x:\mathfrak{B}(\mathbb{R})\to \mathcal{P}(\M),$ where $\mathfrak{B}(\mathbb{R})$ represents the collection of all Borel sets in $\mathbb{R}.$ 

A closed densely defined operator $x:{\rm dom}(x)\to H$ affiliated with~$\mathcal{M}$ is said to be \emph{$\tau$-measurable} if there exists $s>0$ such that $\tau(e^{|x|}(s,\infty))<\infty.$
The set of all $\tau$-measurable operators is denoted by $S(\tau)$. It is a $\ast$-algebra w.r.t. the strong sum, strong multiplication (denoted simply by $x+y$ and $xy$, respectively, for $x,y\in S(\tau)$). General facts about $\tau$-measurable operators can be found in \cite{Nelson1974}, \cite{Segal1953}, \cite{Terp1981}, \cite[Section IX.2]{T2} or the book \cite{DPS2023}. 

For $x\in S(\tau)$, the \emph{distribution function} of $x$ is defined by
$$d(s;x)=\tau(e^{|x|}(s,\infty)),\quad s\geq 0.$$

Define 
$$S_0(\tau)=\{x\in S(\tau):d(s;x)<\infty\ \text{ for all }s>0\}.$$
From \cite[Theorem 1.3]{Kaftal1977}, we have
$$S_0(\tau)\cap \M=\mathcal{K}(\M,\tau).$$

\begin{definition}[p. 129 of \cite{DPS2023}] Let $x\in S(\tau).$ The \emph{singular value function} 
$\mu(x):t\mapsto \mu(t;x)$ of the operator $x$, is defined by
$$\mu(t;x)=\inf\{s\geq0: d(s;x)\leq t\},\quad t\geq 0.$$
The function $t\mapsto \mu(t;x)$ is also written as $\mu(x).$ 
\end{definition}

For $x\in S(\tau)$, we have (see e.g. \cite[Proposition 3.2.5]{DPS2023})
$$\mu(t;x)=\inf\{\|xp\|_{\M}:p\in \mathcal{P}(\M),\ p(H)\subset {\rm dom}(x),\ \tau(\mathbf{1}-p)\le t\},\quad t\ge0.$$

We refer the reader to \cite[Chapter 3]{DPS2023} or \cite{Fack1986} for the general properties of singular value functions.

\subsection{Symmetric spaces}\label{symm subsec} We now recall the definition of symmetric spaces associated with a semifinite von Neumann algebra, which is an analogue of the normed ideals in $B(H_0)$. For these notions we follow \cite{DPS2023, KS2008, Krein1982, LSZ2013}.

\begin{definition}\label{sym function space def}
A \emph{symmetric function space} $E$ is a Banach space of
real-valued Borel measurable functions on $(0,\infty)$, equipped with a norm $\|\cdot\|_E$, such that the following condition holds: If $y\in E, x$ is a measurable function and $\mu(x)\leq\mu(y),$ then $x\in E$ and $\|x\|_E\leq \|y\|_E.$
\end{definition}

Recall that $L_p,1\le p\le \infty$, i.e., the Lebesgue $L_p$-spaces on $(0,\infty)$, are symmetric function spaces.

Let $H$ be a Hilbert space. Let $\M$ be a von Neumann algebra acting on $H$ with a faithful normal semifinite trace $\tau$ (see \cite{DPS2023, T1}). Suppose $E$ is a symmetric function space on $(0,\infty)$. Define operator space 
\begin{equation}\label{symetric space from function}
E(\M)=\{x \in S(\tau):\ \mu(x)\in E\},
\end{equation}
and set
\begin{equation}\label{E norm def}
\left\|x \right\|_{E(\mathcal{M})}=\left\|\mu(x )\right\|_E,\quad x\in E(\M).
\end{equation}
From \cite{KS2008} (see also its exposition in \cite[Theorem 3.5.5]{LSZ2013}), $(E(\M),\|\cdot\|_{E(\M)})$ is a Banach space and is called a {\em symmetric space}.

For a given pair of symmetric function spaces $E,F$ on $(0,\infty)$, the norm in $E(\M)\cap F(\M)$ (also denoted by $(E\cap F)(\M)$) is defined by setting
$$\|x\|_{E(\M)\cap F(\M)}=\max\{\|x\|_{E(\M)},\|x\|_{F(\M)}\},\quad x\in E(\M)\cap F(\M).$$
The norm in $E(\M)+F(\M)$ (also denoted by $(E+F)(\M)$) is defined by setting
$$\|x\|_{E(\M)+ F(\M)}=\inf\{\|y\|_{E(\M)}+\|z\|_{F(\M)}:\ x=y+z,\ y\in E(\M),\ z\in F(\M)\}.$$

The following lemma is elementary (see \cite[Lemma I.3.3]{Krein1982}).

\begin{lemma}\label{symm embedding lem} If $E,F$ are two symmetric function spaces on $(0,\infty)$ such that $F\subset E$, then $F$ is embedded in $E$ continuously.
\end{lemma}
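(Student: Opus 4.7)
The approach is via the closed graph theorem. The inclusion $\iota: F \to E$ is linear and well-defined by the set-theoretic hypothesis $F\subset E$, and both $F$ and $E$ are Banach spaces by the very definition of a symmetric function space, so it suffices to show that the graph of $\iota$ is closed: if $(f_n)\subset F$ satisfies $f_n \to f$ in $\|\cdot\|_F$ and simultaneously $f_n \to g$ in $\|\cdot\|_E$, then $f = g$ almost everywhere. The closed graph theorem will then furnish a constant $C$ with $\|x\|_E \leq C\|x\|_F$ for all $x \in F$, which is precisely the continuous embedding claimed.

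The key auxiliary fact driving the closed-graph verification is the following: for any non-trivial symmetric function space $G$ on $(0,\infty)$, norm convergence in $G$ implies convergence in Lebesgue measure on every set of finite measure. Granting this (applied to both $F$ and $E$), the sequence $(f_n)$ converges both to $f$ and to $g$ locally in measure, and uniqueness of limits in the topological vector space of measurable functions forces $f = g$ almost everywhere.

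To prove the auxiliary fact, argue by contradiction: suppose $\|h_n\|_G \to 0$ but, after passing to a subsequence, there exist a finite-measure set $A\subset (0,\infty)$ and constants $\e,\delta>0$ with $m(B_n) \geq \delta$ for $B_n := \{|h_n|>\e\}\cap A$. The pointwise inequality $\mu(\e\chi_{B_n}) \leq \mu(h_n)$, combined with the defining monotonicity of $\|\cdot\|_G$ under singular-value comparison, yields $\|\chi_{B_n}\|_G \leq \e^{-1}\|h_n\|_G \to 0$. On the other hand, any non-zero $h \in G$ has $\mu(h)$ bounded below by some positive constant on an initial interval $(0,\delta_0)$, so singular-value monotonicity places $\chi_{(0,\delta_0)}$ in $G$ with positive norm. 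After shrinking $\delta$ so that $\delta\leq\delta_0$, the comparison $\mu(\chi_{(0,\delta)}) \leq \mu(\chi_{B_n})$ gives $\|\chi_{B_n}\|_G \geq \|\chi_{(0,\delta)}\|_G > 0$, contradicting the previous bound.

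I do not expect a serious obstacle, consistent with the paper's description of the lemma as elementary; the mild point to watch is the non-triviality hypothesis used to locate indicators of positive norm inside $G$, which is handled by the trivial reduction that the lemma is vacuous when $F=\{0\}$. Beyond this, the argument uses only singular-value comparison and the closed graph theorem.
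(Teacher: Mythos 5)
Your proof is correct, and it follows essentially the same route as the paper's cited source (the paper gives no proof, deferring to Kre\u{\i}n--Petun\={\i}n--Sem\"{e}nov, Lemma I.3.3): the closed graph theorem combined with the fact that norm convergence in a symmetric function space forces local convergence in measure, the latter obtained exactly as you do, by comparing singular values with indicators of sets of positive finite measure. The degenerate case $F=\{0\}$ and the extraction of $\chi_{(0,\delta_0)}\in G$ from any non-zero element are handled correctly, so no gap remains.
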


The proof of the following lemma can be found in \cite[Theorem II.4.1]{Krein1982}.

\begin{lemma}\label{intermidiate lem} For every symmetric function space $E$ on $(0,\infty),$ we have
$$L_1\cap L_\infty\subset E\subset L_1+L_\infty$$
with continuous embeddings.
\end{lemma}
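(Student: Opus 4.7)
The plan is to reduce the problem to proving the two set-theoretic inclusions $L_1\cap L_\infty\subset E$ and $E\subset L_1+L_\infty$; continuity of each embedding then follows automatically from Lemma~\ref{symm embedding lem}, since both $L_1\cap L_\infty$ (with norm $\max(\|\cdot\|_{L_1},\|\cdot\|_{L_\infty})$) and $L_1+L_\infty$ (with K-functional norm $\int_0^1\mu(t;\cdot)\,dt$) are themselves symmetric function spaces in the sense of Definition~\ref{sym function space def}.

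For $L_1\cap L_\infty\subset E$, I would first build up characteristic functions of finite intervals inside $E$. Since $E\neq\{0\}$, there is a nonzero $g\in E$; the nonincreasing $\mu(g)$ dominates $c\chi_{(0,t_0)}$ for suitable $c,t_0>0$, so $\chi_{(0,t_0)}\in E$ by the ideal property. Gluing finitely many disjoint translates (each with $\mu=\chi_{(0,t_0)}$, hence the same $E$-norm) yields $\chi_{(0,a)}\in E$ for every $a>0$, and the subadditivity $\varphi_E(s_1+s_2)\leq\varphi_E(s_1)+\varphi_E(s_2)$ of the fundamental function $\varphi_E(s):=\|\chi_{(0,s)}\|_E$ gives $\varphi_E(s)\leq(s+1)\varphi_E(1)$. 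Given $f\in L_1\cap L_\infty$ with $\|f\|_{L_\infty}>0$, set $b_k:=2^{-k}\|f\|_{L_\infty}$ and $a_k:=d(b_k;f)$, so $a_0=0$ and $b_{k+1}<\mu(f)\leq b_k$ on $[a_k,a_{k+1})$. The nonincreasing majorant $h:=\sum_{k\geq0}b_k\chi_{[a_k,a_{k+1})}\geq\mu(f)$ satisfies
$$\|h\|_E\leq\sum_k b_k\varphi_E(a_{k+1}-a_k)\leq\varphi_E(1)\sum_k\bigl(b_k(a_{k+1}-a_k)+b_k\bigr)\leq 2\varphi_E(1)\bigl(\|f\|_{L_1}+\|f\|_{L_\infty}\bigr),$$
where the key estimate $\sum_k b_k(a_{k+1}-a_k)\leq 2\|f\|_{L_1}$ uses $b_k<2\mu(f)$ on $(a_k,a_{k+1})$. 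The ideal property applied to $\mu(f)\leq\mu(h)=h$ then gives $f\in E$.

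For $E\subset L_1+L_\infty$, I would invoke K\"othe duality. The K\"othe dual $E^\times$ is a symmetric Banach function space in its own right and is non-trivial (using the saturation $\chi_{(0,a)}\in E$ from Part 1), so the first part applied to $E^\times$ yields $L_1\cap L_\infty\subset E^\times$. Passage to K\"othe duals is inclusion-reversing, so $E^{\times\times}\subset (L_1\cap L_\infty)^\times=L_1+L_\infty$ via the classical identification of associate spaces. Combined with the general inclusion $E\subset E^{\times\times}$, this yields $E\subset L_1+L_\infty$.

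The main obstacle is the K\"othe-dual step: one must verify the non-triviality of $E^\times$ and the classical identification $(L_1\cap L_\infty)^\times=L_1+L_\infty$, together with the standard inclusion $E\subset E^{\times\times}$. These facts are classical in the theory of symmetric Banach function spaces; a complete treatment is given in \cite[Theorem~II.4.1]{Krein1982}.
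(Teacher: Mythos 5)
The paper itself does not prove this lemma; it simply cites \cite[Theorem~II.4.1]{Krein1982}. So you are supplying a proof where the authors supply only a reference, and the comparison is therefore between your argument and the classical one.

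Your Part~1 (the inclusion $L_1\cap L_\infty\subset E$) is essentially the standard argument and is sound. Two small remarks: the inequality $\varphi_E(s)\le(s+1)\varphi_E(1)$ already follows from the gluing step ($\varphi_E(m)\le m\varphi_E(1)$ for integers $m$, then monotonicity), so invoking subadditivity of $\varphi_E$ is not strictly needed; and when you write $\|h\|_E\le\sum_k b_k\varphi_E(a_{k+1}-a_k)$ you are applying the triangle inequality to an infinite sum, so you should note that the series of norms converges and invoke completeness (equivalently, the Riesz--Fischer property of a complete ideal space) to conclude that $h\in E$. With those points made explicit, Part~1 is fine.

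Part~2, however, has a genuine circularity. You assert that $E^\times$ is non-trivial ``using the saturation $\chi_{(0,a)}\in E$,'' but $\chi_{(0,a)}\in E$ says nothing directly about $E^\times$. In fact, for a symmetric function space, $E^\times\neq\{0\}$ is \emph{equivalent} to $E\subset L_1+L_\infty$: if $E^\times$ contains a nonzero element it contains some $\chi_{(0,a)}$, hence $\int_0^a\mu(t;f)\,dt<\infty$ for all $f\in E$, which is exactly the statement $E\subset L_1+L_\infty$; and conversely, $E\subset L_1+L_\infty$ forces $L_1\cap L_\infty=(L_1+L_\infty)^\times\subset E^\times$. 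So the non-triviality of $E^\times$ is precisely the content of the inclusion you are trying to prove, and the duality argument cannot be used to establish it. The classical proof in \cite{Krein1982} establishes $E\subset L_1+L_\infty$ directly (roughly, by splitting $f$ at the level $\mu(1;f)$ and controlling the $L_1$-norm of the head using an approximation by step functions together with a completeness/Baire-type argument), and only afterwards does one deduce properties of $E^\times$. As written, your second inclusion is not established; you would need to replace the duality step with a direct argument, or at least isolate and prove $E^\times\neq\{0\}$ without presupposing $E\subset L_1+L_\infty$.
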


Lemma \ref{intermidiate lem} can be extended to symmetric spaces associated with the von Neumann algebra $\M$, namely, for any symmetric function space $E$ on $(0,\infty)$, we have
$$(L_1\cap L_\infty)(\M)\subset E(\M)\subset (L_1+L_\infty)(\M)$$
with continuous embedding (see \cite[p. 395]{DPS2023}).

Let $E^0$ be the closure of $L_1\cap L_{\infty}$ in $E.$ It follows from \cite[p.225]{dodds2014normed} that
$$E^0(\M)=\overline{\mathcal{F}(\M,\tau)}^{E(\M)}=\overline{(L_1\cap L_\infty)(\M)}^{E(\M)}.$$

\subsection{Gelfand-Naimark equivalence}

The following results are part of the standard Gelfand-Naimark equivalence theory between the category of compact topological spaces and the category of unital commutative $C^{\ast}$-algebras. For a commutative $C^*$-algebra $\mathcal{A}$, we let ${\rm Spec}(\mathcal{A})$ denote the {\em spectrum} of $\mathcal{A}$, namely, the set of all characters of $\mathcal{A}$.


\begin{lemma}\label{Gelfand lem} If $\mathcal{A}$ is a separable commutative unital $C^*$-algebra generated by its projections, then ${\rm Spec}(\mathcal{A})$ is a totally disconnected compact metrizable space.
\end{lemma}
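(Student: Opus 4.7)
The plan is to apply the Gelfand--Naimark correspondence and identify $\mathcal{A}$ with $C(X)$ where $X=\mathrm{Spec}(\mathcal{A})$, and then deduce each topological property of $X$ from the corresponding algebraic property of $\mathcal{A}$. Since $\mathcal{A}$ is unital, $X$ is automatically a compact Hausdorff space, so only metrizability and total disconnectedness require an argument.

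For metrizability, I would invoke the standard fact that for a compact Hausdorff space $X$, the Banach space $C(X)$ is separable if and only if $X$ is metrizable. Since the Gelfand transform is an isometric $\ast$-isomorphism $\mathcal{A}\to C(X)$, the separability of $\mathcal{A}$ transfers to $C(X)$, which forces $X$ to be metrizable. (If one wishes to avoid quoting this equivalence, one can instead observe that a countable dense subset of the unit ball of $\mathcal{A}$ yields, via Gelfand duality, a countable family of continuous functions separating points of $X$, and then embed $X$ into $[0,1]^{\mathbb{N}}$.)

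For total disconnectedness, the key observation is that under the Gelfand transform, a projection $p\in\mathcal{A}$ corresponds to a self-adjoint idempotent in $C(X)$, i.e.\ the characteristic function $\chi_U$ of a clopen subset $U\subset X$. By assumption $\mathcal{A}$ is generated (as a $C^\ast$-algebra with unit) by its projections, so $C(X)$ is generated by indicator functions of clopen sets. Given distinct points $x,y\in X$, Urysohn's lemma furnishes some $f\in C(X)$ with $f(x)\ne f(y)$, and by density one can approximate $f$ in the uniform norm by polynomials in characteristic functions of clopen sets; hence some clopen set $U$ satisfies $\chi_U(x)\ne\chi_U(y)$. Therefore the clopen sets of $X$ separate points.

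Finally, I would conclude that a compact Hausdorff space whose clopen subsets separate points is totally disconnected: if $x$ and $y$ lie in the same connected component and some clopen $U$ satisfies $x\in U$, $y\notin U$, then $U$ would split that component, a contradiction. Thus connected components are singletons, which is precisely the definition of total disconnectedness. None of the steps should present real difficulty; the only subtlety is to make sure that ``generated by projections'' is used in the sense that gives density of the linear span (together with its closure under the involution) in $C(X)$, which is what justifies the separation of points by clopen sets.
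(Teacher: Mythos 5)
Your proof is correct: the paper states this lemma without proof as part of standard Gelfand--Naimark theory, and your argument (separability of $C(X)$ gives metrizability; projections correspond to clopen sets, density of polynomials in them gives separation of points by clopen sets, hence singleton components) is exactly the standard reasoning the paper implicitly relies on. No gaps to report.
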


\begin{lemma}\label{embedding lem}
Let $X,Y$ be compact metrizable spaces. There exists a $\ast$-monomorphism $$\iota:C(Y)\to C(X)$$ if and only if there exists a (unique) continuous surjection $\pi:X\to Y$ such that
\begin{equation}\label{pull-back}
\iota(f)=f\circ \pi,\quad f\in C(Y).
\end{equation}
\end{lemma}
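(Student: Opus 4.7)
\textbf{Proof plan for Lemma \ref{embedding lem}.} This is a standard application of Gelfand--Naimark duality, so I would present it as follows. The \emph{if} direction is routine: given a continuous surjection $\pi : X \to Y$, the pull-back $\iota(f) := f \circ \pi$ is plainly a unital $\ast$-homomorphism $C(Y) \to C(X)$, and if $\iota(f) = 0$ then $f$ vanishes on $\pi(X) = Y$, forcing $f = 0$. Hence $\iota$ is a $\ast$-monomorphism. Uniqueness of $\pi$ follows because $C(Y)$ separates points of $Y$: if two continuous maps $\pi_1,\pi_2 : X \to Y$ both induce the same $\iota$, then $f(\pi_1(x)) = f(\pi_2(x))$ for every $f \in C(Y)$ and every $x \in X$, so $\pi_1 = \pi_2$.

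For the \emph{only if} direction, I would use the Gelfand spectrum. Fix $x \in X$ and define $\chi_x : C(Y) \to \mathbb{C}$ by $\chi_x(f) := \iota(f)(x)$. Since $\iota$ is a unital $\ast$-homomorphism, $\chi_x$ is a unital algebra homomorphism, and it is non-zero (it sends $\mathbf{1}$ to $1$), so $\chi_x$ is a character of $C(Y)$. By the Gelfand--Naimark identification $Y \cong \mathrm{Spec}(C(Y))$, there is a unique point $\pi(x) \in Y$ with $f(\pi(x)) = \chi_x(f) = \iota(f)(x)$ for all $f \in C(Y)$. This defines the map $\pi : X \to Y$ and, by construction, gives $\iota(f) = f \circ \pi$.

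The remaining checks are continuity and surjectivity. Continuity: for any $f \in C(Y)$, the composition $f \circ \pi = \iota(f)$ lies in $C(X)$, hence is continuous; since $Y$ is metrizable (in particular Hausdorff and with $C(Y)$ separating points in the initial topology), this is enough to conclude that $\pi$ itself is continuous. Surjectivity: suppose toward a contradiction that $\pi(X) \subsetneq Y$. Since $X$ is compact and $\pi$ is continuous, $\pi(X)$ is a proper closed subset of $Y$. By Urysohn's lemma there is a non-zero $g \in C(Y)$ with $g \equiv 0$ on $\pi(X)$, and then $\iota(g) = g \circ \pi = 0$, contradicting injectivity of $\iota$. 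Hence $\pi$ is surjective.

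There is no serious obstacle; the only point that deserves care is verifying that the pointwise-defined map $\pi$ is continuous, which one simply reads off from the continuity of the functions $\iota(f) = f \circ \pi$ together with the fact that $C(Y)$ induces the topology of $Y$.
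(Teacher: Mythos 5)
Your proof is correct. The paper states this lemma without proof, treating it as a standard consequence of Gelfand--Naimark duality, and your argument is exactly the standard one: the backward direction with injectivity from surjectivity of $\pi$ plus separation of points for uniqueness, and the forward direction by realizing each evaluation $x\mapsto(\,f\mapsto\iota(f)(x)\,)$ as a character of $C(Y)$, hence a point of $Y$, with continuity from the initial topology and surjectivity from Urysohn. There is nothing to compare against in the paper, and no gaps in your write-up.
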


\subsection{Notations for $n$-tuples of operators and spaces}\label{n-tuples subsec}

Let $\alpha=(\alpha(j))_{j=1}^n,\beta=(\beta(j))_{j=1}^n\in\M^n$ be commuting self-adjoint $n$-tuples of operators, $a\in \M,$ $\xi\in \mathbb{C}$, $\psi:C^\ast(\alpha)\to\M$ a $\ast$-homomorphism, and $f$ a bounded Borel function on $\mathbb{R}$, we write
$$\alpha+\beta=(\alpha(j)+ \beta(j))_{j=1}^n,\quad \xi\alpha=(\xi\alpha(j))_{j=1}^n,$$
$$\alpha\cup\beta=(\alpha(1),\ldots,\alpha(n),\beta(1),\ldots,\beta(n)),$$
$$[a,\alpha]=([a,\alpha(j)])_{j=1}^n,\quad \psi(\alpha)=(\psi(\alpha(j)))_{j=1}^n,\quad f(\alpha)=(f(\alpha(j)))_{j=1}^n,$$
where $f(\alpha(j))$ is the Borel functional calculus of $\alpha(j)$ for each $1\le j\le n.$ Sometimes it is convenient to consider $\alpha$ as a set, and in this case by writing $b\in\alpha$ we mean that $b\in \{\alpha(1),\ldots,\alpha(n)\}$.

For convenience, we write ${\rm Spec}(C^\ast(\alpha))={\rm Spec}(\alpha)$. The spectral measure of $\alpha$ is denoted by $e^{\alpha}:\mathfrak{B}(\mathbb{R}^n)\to \mathcal{P}(\M),$ where $\mathfrak{B}(\mathbb{R}^n)$ represents the collection of all Borel sets in $\mathbb{R}^n.$ For every point $\lambda\in \mathbb{R}^n$, the projection $e^{\alpha}(\{\lambda\})$ is called an {\em eigenprojection of $\alpha$} whenever it is non-zero.

Suppose $\varPhi=(E_1,\ldots,E_n)$ is an $n$-tuple of symmetric function spaces on $(0,\infty)$. Suppose $F$ is another symmetric function space on $(0,\infty)$. We write
$$\varPhi\cap F=(E_1\cap F,\ldots,E_n\cap F),\quad \|\alpha\|_{\varPhi\cap F}=\max_{1\le j\le n}\|\alpha(j)\|_{E_j\cap F}.$$

\section{Quasicentral modulus}\label{qm sec}

Recall that quasicentral modulus is defined in Section \ref{intro sec}. We collect some useful properties of quasicentral modulus in this section.

In the following lemma, we show that if $p\in\M$ is a projection that reduces $\alpha,$ then $k_{\varPhi(p\M p)}(p\alpha)\le k_{\varPhi(\M)}(\alpha).$

\begin{lemma}\label{qm vanishes on subspace} Let $\M$ be a von Neumann algebra with a faithful normal semifinite trace $\tau$. Let $\alpha\in\M^n$ be a commuting self-adjoint $n$-tuple. Suppose $\varPhi=(E_1,\ldots,E_n)$ is an $n$-tuple of symmetric function spaces on $(0,\infty).$ Suppose $p\in\M$ is a projection that commutes with $\alpha$. We have $k_{\varPhi(p\M p)}(p\alpha)\le k_{\varPhi(\M)}(\alpha).$
\end{lemma}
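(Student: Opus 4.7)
The plan is to test the inequality one element at a time: given any $a\in\mathcal{F}_1^+(p\M p)$, I will bound the inner infimum over $p\M p$ by the inner infimum over $\M$, using the compression $r\mapsto prp$ as a natural bridge between the two domains. First I observe that $\mathcal{F}_1^+(p\M p)\subset\mathcal{F}_1^+(\M)$ since $0\le a\le p\le\mathbf 1$ and the left support $\mathfrak{l}(a)$ is the same projection when $a$ is viewed in $p\M p$ or in $\M$. So the chosen $a$ is already admissible as a test element in the definition of $k_{\varPhi(\M)}(\alpha)$, and for every $r\in\mathcal{F}_1^+(\M)$ with $r\ge a$, I want $r':=prp$ to be admissible in $\mathcal{F}_1^+(p\M p)$ with $r'\ge a$.

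The relations $0\le r'\le p$ and $a=pap\le prp=r'$ are immediate. The only delicate point is to verify that $\mathfrak{l}(r')$ is $\tau$-finite. I will use the chain
$$\mathfrak{l}(prp)\le \mathfrak{l}(pr)\sim \mathfrak{r}(pr)\le \mathfrak{r}(r)=\mathfrak{l}(r),$$
where the first inequality comes from $prp(H)\subset pr(H)$, the Murray--von Neumann equivalence is the standard polar-decomposition fact, the third inequality follows from $\ker(r)\subset\ker(pr)$, and the last equality holds because $r$ is self-adjoint. Since $\tau$ respects Murray--von Neumann equivalence and $\tau(\mathfrak{l}(r))<\infty$, this yields $\tau(\mathfrak{l}(r'))<\infty$, so $r'\in\mathcal{F}_1^+(p\M p)$.

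Next, because $p$ commutes with every $\alpha(j)$, a direct expansion gives $[r',p\alpha(j)]=prp\,\alpha(j)-\alpha(j)\,prp=p[r,\alpha(j)]p$ for each $j$. I then invoke the standard contractivity of compressions on symmetric spaces: for $y\in\M$, $\mu(pyp)\le\mu(y)$, and for $y\in p\M p$ the singular value function computed in $p\M p$ with respect to $\tau|_{p\M p}$ coincides with the one computed in $\M$, so
$$\|p[r,\alpha(j)]p\|_{E_j(p\M p)}\le\|[r,\alpha(j)]\|_{E_j(\M)},\qquad 1\le j\le n.$$
Taking the maximum over $j$ produces $\|[r',p\alpha]\|_{\varPhi(p\M p)}\le\|[r,\alpha]\|_{\varPhi(\M)}$.

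Finally I pass to infima and suprema in the correct order: for fixed $a$, taking the infimum over $r\ge a$ in $\mathcal{F}_1^+(\M)$ on the right-hand side dominates the (smaller) infimum on the left-hand side taken only over elements of the form $prp$, which is in turn no smaller than the infimum over all $r'\ge a$ in $\mathcal{F}_1^+(p\M p)$; taking the supremum over $a\in\mathcal{F}_1^+(p\M p)\subset\mathcal{F}_1^+(\M)$ then yields $k_{\varPhi(p\M p)}(p\alpha)\le k_{\varPhi(\M)}(\alpha)$. The main obstacle in this plan is the $\tau$-finiteness of $\mathfrak{l}(prp)$, which does not follow from a naive domination by $p$ (since $p$ may be $\tau$-infinite) and genuinely requires the Murray--von Neumann equivalence argument above.
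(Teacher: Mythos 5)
Your proposal is correct and follows essentially the same route as the paper: both arguments use the compression $r\mapsto prp$, the identity $[prp,p\alpha(j)]=p[r,\alpha(j)]p$ (valid since $p$ commutes with $\alpha$), contractivity of the compression in the symmetric norm, and then pass to the infimum and supremum in the same order. The only difference is that you explicitly verify the $\tau$-finiteness of $\mathfrak{l}(prp)$ via $\mathfrak{l}(prp)\le\mathfrak{l}(pr)\sim\mathfrak{r}(pr)\le\mathfrak{l}(r)$, a point the paper's proof leaves implicit; your verification is correct.
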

\begin{proof} Without loss of generality, we may assume that $k_{\varPhi(\M)}(\alpha)<\infty.$

By the definition of $k_{\varPhi(\M)},$ we have
$$\inf_{\stackrel{r\geq a}{r\in \mathcal{F}_1^+(\M)}}\|[r,\alpha]\|_{\varPhi(\M)}\le k_{\varPhi(\M)}(\alpha),\quad a\in \mathcal{F}_1^+(\M).$$
Hence, for every $\varepsilon>0$ and every $a\in\mathcal{F}_1^+(\M)$, there exists $r\in \mathcal{F}_1^+(\M)$ such that $r\geq a$ and 
$$\|[r,\alpha]\|_{\varPhi(\M)}<k_{\varPhi(\M)}(\alpha)+\varepsilon.$$
If $a\in\mathcal{F}_1^+(p\M p),$ then $r':=prp\geq pap=a$ and $[r',p\alpha]=p[r,\alpha]p.$ Hence, for every $\varepsilon>0$ and for every $a\in\mathcal{F}_1^+(p\M p),$ there exists $r'\in \mathcal{F}_1^+(p\M p)$ such that $r'\geq a$ and
$$\|[r',p\alpha]\|_{\varPhi(p\M p)}<k_{\varPhi(\M)}(\alpha)+\varepsilon.$$
By the definition of $k_{\varPhi(p\M p)},$ we have $k_{\varPhi(p\M p)}(p\alpha)<k_{\varPhi(\M)}(\alpha)+\varepsilon.$ Since $\varepsilon>0$ is arbitrarily small, the assertion follows.
\end{proof}

The following lemma is technical. It allows us to pass from the approximation in the norm $\|\cdot\|_{\varPhi(\M)}$ to the approximation in the norm $\|\cdot\|_{(\varPhi\cap L_\infty)(\M)}.$

\begin{lemma}\label{quasicentral modulus on bounded part} Let $\M$ be a von Neumann algebra with a faithful normal semifinite trace $\tau$ and $\alpha\in \M^n$ a commuting self-adjoint $n$-tuple. Let $\varPhi=(E_1,\ldots,E_n)$ be an $n$-tuple of symmetric function spaces on $(0,\infty).$ If $k_{\varPhi(\M)}(\alpha)=0$, then $k_{(\varPhi\cap L_{\infty})(\M)}(\alpha)=0.$
\end{lemma}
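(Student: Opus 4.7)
Given $a\in\mathcal{F}_1^+(\M)$ and $\varepsilon>0$, the task is to produce $r\ge a$ in $\mathcal{F}_1^+(\M)$ with $\|[r,\alpha(j)]\|_{E_j}<\varepsilon$ and $\|[r,\alpha(j)]\|_\M<\varepsilon$ simultaneously for every $1\le j\le n$. The hypothesis gives the $E_j$-part for free, so the real content is the operator-norm bound; note that smallness of $\|[r,\alpha(j)]\|_{E_j}$ alone does not imply smallness of $\|[r,\alpha(j)]\|_\M$ (e.g.\ for $E_j=L_1$, a thin high-valued spectral projection shows $\|\cdot\|_{L_1}\ll\|\cdot\|_\M$ is possible), so a genuinely different $r$ must be built. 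My plan combines the hypothesis with a Weyl--von Neumann-type diagonalization carried out inside a $\tau$-finite corner.

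First, I would use $k_{\varPhi(\M)}(\alpha)=0$ to choose $r_0\ge a$ in $\mathcal{F}_1^+(\M)$ with $\|[r_0,\alpha(j)]\|_{E_j}$ much smaller than $\varepsilon$, and set $p_0:=\mathfrak{l}(r_0)$, a $\tau$-finite projection with $p_0\ge a$. Inside the finite von Neumann algebra $p_0\M p_0$, an Akemann--Pedersen-type simultaneous diagonalization of the commuting tuple $(p_0\alpha(j)p_0)_j$ produces pairwise orthogonal projections $\{e_m\}\subset p_0\M p_0$ summing to $p_0$ and scalars $\lambda_{j,m}$ satisfying
$$\Bigl\|p_0\alpha(j)p_0-\sum_m\lambda_{j,m}e_m\Bigr\|_\M<\varepsilon/4,$$
and by a preliminary enlargement of $r_0$ one may additionally enforce $\|[p_0,\alpha(j)]\|_\M<\varepsilon/4$.

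Second, set $r:=\sum_m e_m r_0 e_m$, the block-diagonal compression. Then $r$ commutes with $\sum_m\lambda_{j,m}e_m$, so writing $\alpha(j)=p_0\alpha(j)p_0+(\alpha(j)-p_0\alpha(j)p_0)$ and using the identity $p_0\alpha(j)(1-p_0)=[p_0,\alpha(j)](1-p_0)$ yields $\|[r,\alpha(j)]\|_\M\le 2\cdot\varepsilon/4+2\|[p_0,\alpha(j)]\|_\M<\varepsilon$. The $E_j$-norm of $[r,\alpha(j)]$ is controlled by $\|[r_0,\alpha(j)]\|_{E_j}$ together with corrections involving $\|r-r_0\|$, which can be arranged to vanish as the diagonalization accuracy is refined.

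The main obstacle is that the compression $\sum_m e_m r_0 e_m$ is generally not $\ge r_0$ in the operator order, and so may fail to dominate $a$. The remedy I would pursue is an upfront inflation: apply the hypothesis with a sufficiently large base point $a'\ge a$ (chosen so that $p_0$ is big enough and the off-diagonal blocks $e_m r_0 e_{m'}$ contribute negligibly) so that the resulting compression still dominates $a$. Reconciling this inflation with the required operator-norm control on $[p_0,\alpha(j)]$---and tuning both against the $E_j$-norm budget---is the central technical difficulty, and is most naturally handled by iterating between the two applications of $k_{\varPhi(\M)}(\alpha)=0$ with a decreasing sequence of tolerances.
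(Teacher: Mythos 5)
Your overall strategy---first harvest $\varPhi$-control from the hypothesis via a single global $r_0$, then repair the uniform norm by passing to a block-diagonal compression---runs into two genuine gaps. First, the compressions $p_0\alpha(j)p_0$ do \emph{not} commute: the tuple $\alpha$ commutes, but $p_0=\mathfrak{l}(r_0)$ does not commute with $\alpha$, so the ``Akemann--Pedersen-type simultaneous diagonalization of the commuting tuple $(p_0\alpha(j)p_0)_j$'' you invoke is not available. What you actually have is an almost-commuting self-adjoint tuple, and approximating such a tuple in the uniform norm by an exactly commuting (diagonal) one is a Lin-type problem that fails in general for $n\ge 3$ (Davidson/Voiculescu-type counterexamples) and is in any case far beyond what this lemma should require. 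Second, the obstacle you yourself flag---that $r=\sum_m e_m r_0 e_m$ need not dominate $a$---is not resolved by the proposed ``upfront inflation'': taking $r_0\ge a'\ge a$ does not make the compression dominate $a$, because the $e_m$ bear no relation to $a$ or to $r_0$; likewise the off-diagonal part $r_0-\sum_m e_m r_0 e_m$ is not controlled by the ``diagonalization accuracy'', so your claimed $E_j$-control of $[r,\alpha(j)]$ through $\|r-r_0\|$ does not follow either. The closing suggestion of ``iterating with decreasing tolerances'' is a statement of intent, not an argument, and there is a circularity in the tuning: the uniform bound you would need on $\|[p_0,\alpha]\|_{\M}$ in order to convert uniform-norm errors inside the $\tau$-finite corner into $E_j$-norm errors depends on $\tau(p_0)$, which is only known after $p_0$ has been chosen.

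Both difficulties disappear if the block decomposition is taken with respect to spectral projections of $\alpha$ itself rather than of a compression, and if the hypothesis is applied \emph{inside each block}. Choose finitely many spectral projections $p_1,\dots,p_m$ of the commuting tuple $\alpha$ with $\sum_{k}p_k=\mathbf{1}$ and $\|p_k\alpha(j)-\lambda_{j,k}p_k\|_{\M}\le\e/2$; since each $p_k$ commutes with $\alpha$, Lemma \ref{qm vanishes on subspace} gives $k_{\varPhi(p_k\M p_k)}(p_k\alpha)=0$, so one can pick $r_k$ with $\mathfrak{l}(p_kq)\le r_k\le p_k$ (where $q=\mathfrak{l}(a)$) and $\|[r_k,\alpha]\|_{\varPhi(\M)}\le\e/m$. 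Then $r=\sum_k r_k\ge q\ge a$ by Lemma \ref{left support lemma}; the commutator $[r,\alpha]=\sum_k p_k[r_k,\alpha]p_k$ is block diagonal, so its uniform norm is the maximum over blocks, each at most $\e$ because $\alpha$ is within $\e/2$ of a scalar on $p_k$, while its $\varPhi$-norm is at most the sum, again at most $\e$. In short, the order of operations matters: the $\varPhi$-smallness must be extracted inside each commuting spectral block, not globally and then compressed.
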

\begin{proof} Fix $\e>0.$ There exist a family $\{p_k\}_{k=1}^m$ of pairwise orthogonal spectral projections of $\alpha$ and a family of real numbers $\{\lambda_{j,k}:1\le j\le n,1\le k\le m\}$ such that $\sum_{k=1}^m p_k=\mathbf{1}$ and
\begin{equation}\label{dhzk eq0}
\|p_k\alpha(j)-\lambda_{j,k}p_k\|_{\M}\leq\frac{\e}{2},\quad 1\le j\le n,\ 1\le k\le m.
\end{equation}

Fix $a\in \mathcal{F}_1^+(\M)$ and set $q=\mathfrak{l}(a).$ Clearly, $q$ is a $\tau$-finite projection and $a\leq q.$

Set
$$q_k=\mathfrak{l}(p_kq),\quad 1\le k\le m.$$
Since $q_k\le p_k$ for $1\le k\le m$, it follows that $\{q_k\}_{k=1}^m$ are pairwise orthogonal.

Since $p_k$ commutes with $\alpha,$ it follows from Lemma \ref{qm vanishes on subspace} that 
$$k_{\varPhi(p_k\M p_k)}(p_k\alpha)=0.$$
Hence, by Definition \ref{quasicentral modulus def} we have 
$$\inf_{\stackrel{r\ge q_k}{r\in \mathcal{F}_1^+(p_k\M p_k)}}\|[r,p_k\alpha]\|_{\varPhi(p_k\M p_k)}=0.$$
This allows us to choose an operator $r_k\in \mathcal{F}_1^+(p_k\M p_k)$ such that $q_k\le r_k\le p_k$ and
\begin{equation}\label{commutator on small space}
\|[r_k,p_k\alpha]\|_{\varPhi(p_k\M p_k)}\leq\frac{\e}{m}.
\end{equation}
For $1\le j\le n$, noting that $r_k\le p_k$ and that $p_k$ commutes with $\alpha(j)$, we write
$$[r_k,\alpha(j)]=r_kp_k\alpha(j)-\alpha(j)p_kr_k=r_k(p_k\alpha(j)-\lambda_{j,k}p_k)-(p_k\alpha(j)-\lambda_{j,k}p_k)r_k.$$
By the triangle inequality and \eqref{dhzk eq0},
\begin{equation}\label{commutator Linfty norm}
\|[r_k,\alpha(j)]\|_{\M}\le 2\|p_k\alpha(j)-\lambda_{j,k}p_k\|_{\M}\le\e,\quad 1\le j\le n.
\end{equation}
Again noting that $r_k\le p_k$ and that $p_k$ commutes with $\alpha(j)$, we write
$$[r_k,p_k\alpha(j)]=[r_k,\alpha(j)],\quad 1\le j\le n.$$
Thus, by \eqref{commutator on small space} we have
\begin{equation}\label{commutator L1 norm}
\|[r_k,\alpha]\|_{\varPhi(\M)}\le\frac{\e}{m}.
\end{equation}

Let $r=\sum_{k=1}^m r_k.$ Since $r_k\leq p_k$ for $1\leq k\leq m$ and since $\{p_k\}_{k=1}^m$ are pairwise orthogonal projections, it follows that $r\in \mathcal{F}_1^+(\M).$ We have
$$q=\mathfrak{l}(q)=\mathfrak{l}((\sum_{1\le k\le m} p_k)q)\overset{L.\ref{left support lemma}}{\le} \bigvee_{1\le k\le m}\mathfrak{l}(p_kq)=\bigvee_{1\le k\le m}q_k=\sum_{k=1}^mq_k\le\sum_{k=1}^mr_k=r.$$
Since also $a\leq q,$ it follows that $a\leq r.$

For $1\le k\le m$, since $r_k\le p_k$ and since $p_k$ commutes with $\alpha,$ it follows that
$$[r_k,\alpha]=p_k[r_k,\alpha]p_k.$$
Thus,
$$[r,\alpha]=\sum_{k=1}^m[r_k,\alpha]=\sum_{k=1}^mp_k[r_k,\alpha]p_k.$$
Therefore,
$$\|[r,\alpha]\|_{\M}=\max_{1\leq k\leq m}\|p_k[r_k,\alpha]p_k\|_{\M}\leq\max_{1\leq k\leq m}\|[r_k,\alpha]\|_{\M}\stackrel{\eqref{commutator Linfty norm}}{\leq}\e,$$
$$\|[r,\alpha]\|_{\varPhi(\M)}\le \sum_{1\le k\le m} \|[r_k,\alpha]\|_{\varPhi(\M)}\stackrel{\eqref{commutator L1 norm}}{\le}\sum_{k=1}^m\frac{\e}{m}=\e.$$
Hence,
\begin{equation}\label{commutator estimate 2}
\|[r,\alpha]\|_{(\varPhi\cap L_{\infty})(\M)}\le \e.
\end{equation}
Since $r\geq a,$ it follows that
$$\inf_{\stackrel{r\ge a}{r\in \mathcal{F}_1^+(\M)}}\|[r,\alpha]\|_{(\varPhi\cap L_{\infty})(\M)}\leq\varepsilon.$$
Since $\varepsilon>0$ is arbitrarily small, it follows that
$$\inf_{\stackrel{r\ge a}{r\in \mathcal{F}_1^+(\M)}}\|[r,\alpha]\|_{(\varPhi\cap L_{\infty})(\M)}=0.$$
Since $a\in \mathcal{F}_1^+(\M)$ is arbitrary, the assertion follows from the definition of $k_{(\varPhi\cap L_\infty)(\M)}$.
\end{proof}

In the lemma below, we show that the quasicentral modulus with respect to the $n$-tuple $\varPhi=(L_\infty)^n$, i.e., $k_{\M}$, vanishes for all commuting self-adjoint $n$-tuples. 

\begin{lemma}\label{5.2 pre lem} Let $\M$ be a von Neumann algebra with a faithful normal semifinite trace $\tau$. Let $\alpha\in\mathcal{M}^n$ be a commuting self-adjoint $n$-tuple. For every $\tau$-finite projection $q\in\M$ and for every $\e>0$, there is a $\tau$-finite projection $p\ge q$ such that $\|[p,\alpha]\|_{\M}\le \e.$ In particular, $k_{\M}(\alpha)=0.$
\end{lemma}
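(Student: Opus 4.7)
The plan is to combine a finite spectral decomposition of $\alpha$ with the technique already used in the proof of Lemma~\ref{quasicentral modulus on bounded part}: slice $\mathbf{1}$ into pairwise orthogonal spectral projections $p_k$ on which each $\alpha(j)$ is close to a scalar, then enlarge $q$ inside each slice separately. The only novelty over that earlier argument is that here the norm we are measuring commutators in is $\|\cdot\|_{\M}$, so we do not need to impose any smallness of the commutators beyond what a trivial sub-projection of $p_k$ already gives for free.

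Concretely, first I would use the joint spectral theorem for the commuting self-adjoint tuple $\alpha$. Since the joint spectrum lies in a compact cube of $\mathbb{R}^n$, I partition it into finitely many Borel sets $B_1,\ldots,B_m$ of diameter at most $\varepsilon$, set $p_k=e^{\alpha}(B_k)$, and pick points $(\lambda_{1,k},\ldots,\lambda_{n,k})\in B_k$. Then $\{p_k\}_{k=1}^m$ are pairwise orthogonal spectral projections of $\alpha$ with $\sum_k p_k=\mathbf{1}$ and
\begin{equation*}
\|p_k\alpha(j)-\lambda_{j,k}p_k\|_{\M}\le\tfrac{\varepsilon}{2},\quad 1\le j\le n,\ 1\le k\le m.
\end{equation*}

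Next, define $q_k=\mathfrak{l}(p_kq)$. Since $q_k\le p_k$, the $q_k$ are pairwise orthogonal; and since $\mathfrak{l}(p_kq)\sim \mathfrak{r}(p_kq)\le \mathfrak{r}(q)=q$, each $q_k$ is $\tau$-finite, hence so is $p:=\sum_{k=1}^m q_k$. Lemma~\ref{left support lemma} gives
\begin{equation*}
q=\mathfrak{l}(q)=\mathfrak{l}\bigl((\textstyle\sum_k p_k)q\bigr)\le\bigvee_k \mathfrak{l}(p_kq)=\sum_k q_k=p,
\end{equation*}
so $p\ge q$ as required.

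It remains to estimate $\|[p,\alpha(j)]\|_{\M}$. Exactly as in the proof of Lemma~\ref{quasicentral modulus on bounded part}, using $q_k\le p_k$ and the fact that $p_k$ commutes with $\alpha(j)$, I rewrite
\begin{equation*}
[q_k,\alpha(j)]=q_k\bigl(p_k\alpha(j)-\lambda_{j,k}p_k\bigr)-\bigl(p_k\alpha(j)-\lambda_{j,k}p_k\bigr)q_k=p_k[q_k,\alpha(j)]p_k,
\end{equation*}
which simultaneously gives $\|[q_k,\alpha(j)]\|_{\M}\le \varepsilon$ and ensures that the terms $[q_k,\alpha(j)]$ for different $k$ are supported under pairwise orthogonal projections. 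Summing,
\begin{equation*}
\|[p,\alpha(j)]\|_{\M}=\Bigl\|\sum_{k=1}^m p_k[q_k,\alpha(j)]p_k\Bigr\|_{\M}=\max_{1\le k\le m}\|p_k[q_k,\alpha(j)]p_k\|_{\M}\le\varepsilon,
\end{equation*}
proving the main assertion. For the ``in particular'' clause, given any $a\in\mathcal{F}_1^+(\M)$ and $\varepsilon>0$, take $q=\mathfrak{l}(a)$ (which is $\tau$-finite) so that $a\le q$; applying the first part yields a $\tau$-finite $p\ge q\ge a$ with $\|[p,\alpha]\|_{\M}\le\varepsilon$, whence $k_{\M}(\alpha)=0$ by Definition~\ref{quasicentral modulus def}. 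There is no real obstacle here; the only point that requires care is verifying that $q_k$ is $\tau$-finite (handled by the equivalence $\mathfrak{l}(p_kq)\sim\mathfrak{r}(p_kq)\le q$) and that the sandwiching $[q_k,\alpha(j)]=p_k[q_k,\alpha(j)]p_k$ legitimately converts a triangle-inequality sum into a maximum.
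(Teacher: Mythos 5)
Your proof is correct and follows essentially the same route as the paper's: spectral projections giving a scalar approximation of $\alpha$ on each slice, $q_k=\mathfrak{l}(p_kq)$, Lemma \ref{left support lemma} to get $q\le p$, the commutator identity on each slice, and the orthogonal-sandwich max estimate. The only cosmetic difference is that you explicitly record that $q_k\sim\mathfrak{r}(p_kq)\le q$ is $\tau$-finite (which the paper leaves implicit), and you fold the two steps $[q_k,\alpha(j)]=q_k(\cdots)-(\cdots)q_k$ and $[q_k,\alpha(j)]=p_k[q_k,\alpha(j)]p_k$ into one displayed line.
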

\begin{proof} For every $\e>0,$ there exist a family $\{e_k\}_{k=1}^m$ of pairwise orthogonal spectral projections of $\alpha$ and a family of real numbers $\{\lambda_{j,k}:1\le j\le n,1\le k\le m\}$ such that $\sum_{k=1}^m e_k=\mathbf{1}$ and
\begin{equation}\label{spectral approximation}
\|e_k\alpha(j)-\lambda_{j,k}e_k\|_{\M}\leq\frac{\e}{2},\quad 1\le j\le n,\ 1\le k\le m.
\end{equation}

Let $q\in\M$ be a $\tau$-finite projection. Set
$$q_k=\mathfrak{l}(e_kq),\quad 1\le k\le m.$$
Note that $q_k\le e_k$ for $1\le k\le m.$ Thus, $q_{k_1}q_{k_2}=0$ if $k_1\ne k_2.$  

We have
$$q=\mathfrak{l}((\sum_{k=1}^m e_k)q)\stackrel{L.\ref{left support lemma}}{\le}\bigvee_{1\le k\le m}\mathfrak{l}(e_kq)=\sum_{k=1}^m q_k.$$
Set
$$p=\sum_{k=1}^mq_k.$$
For $1\le j\le n$ and $1\le k\le m$ we have
\begin{multline*}
[q_k,\alpha(j)]=q_k\alpha(j)-\alpha(j)q_k=q_ke_k\alpha(j)-\alpha(j)e_kq_k\\
=q_ke_k\alpha(j)-q_k\lambda_{j,k}e_k+\lambda_{j,k}e_kq_k-\alpha(j)e_kq_k=q_k(e_k\alpha(j)-\lambda_{j,k}e_k)+(\lambda_{j,k}e_k-\alpha(j)e_k)q_k.
\end{multline*}
By the triangle inequality,
\begin{equation}\label{commutator estimate uniform}
\|[q_k,\alpha(j)]\|_{\M}\le 2\|e_k\alpha(j)-\lambda_{j,k}e_k\|_{\M}\overset{\eqref{spectral approximation}}{\le} \e.
\end{equation}
We have 
$$[p,\alpha(j)]=\sum_{k=1}^m [q_k,\alpha(j)]=\sum_{k=1}^m e_k[q_k,\alpha(j)]e_k.$$
Thus, 
$$\|[p,\alpha(j)]\|_{\M}\le \max_{1\le k\le m}\|[q_k,\alpha(j)]\|_{\M}\overset{\eqref{commutator estimate uniform}}{\le} \e,\quad 1\le j\le n.$$
This proves the first assertion.

Fix $\e>0$ and $a\in \mathcal{F}_1^+(\M,\tau).$ Set $q=\mathfrak{l}(a).$ We have that  $q\in \mathcal{P}_f(\M,\tau)$ and
$$a\le \|a\|_{\M}\mathfrak{l}(a)\le \mathfrak{l}(a)=q.$$
By the first assertion of the lemma, there exists a $p\in \mathcal{P}_f(\M,\tau)$ such that $p\ge q$ (and thus $p\ge a$) and $\|[p,\alpha]\|_{\M}\le\e.$ Obviously, $p\in \mathcal{F}_1^+(\M)$. Thus,
$$\inf_{\stackrel{r\ge a}{r\in \mathcal{F}_1^+(\M)}}\|[r,\alpha]\|_{\M}\le \|[p,\alpha]\|_{\M}\le \e.$$
Since $\e>0$ is arbitrary, it follows that
$$\inf_{\stackrel{r\ge a}{r\in \mathcal{F}_1^+(\M)}}\|[r,\alpha]\|_{\M}=0.$$
Since the choice of $a\in \mathcal{F}_1^+(\M)$ is arbitrary, it follows from the definition of $k_{\M}$ that $k_{\M}(\alpha)=0.$
\end{proof}

\begin{lemma}\label{replacement for 5.2} Let $\M$ be a $\sigma$-finite von Neumann algebra with a faithful normal semifinite trace $\tau$. Let $\alpha\in\mathcal{M}^n$ be a commuting self-adjoint $n$-tuple. There exists an increasing sequence $\{p_k\}_{k\in\mathbb{Z}_+}\subset\mathcal{P}_f(\mathcal{M},\tau)$ such that $p_k\uparrow \mathbf{1}$ and 
$$\|[p_k,\alpha]\|_{\mathcal{M}}\to0,\quad k\to\infty.$$
\end{lemma}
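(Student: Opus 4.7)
The plan is to iterate Lemma \ref{5.2 pre lem} along a pre-chosen increasing exhaustion of $\mathbf{1}$ by $\tau$-finite projections, shrinking the commutator tolerance at each step.

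First I would build an increasing sequence $\{q_k\}_{k\in\mathbb{Z}_+}\subset\mathcal{P}_f(\M,\tau)$ with $q_k\uparrow\mathbf{1}$. By Zorn's lemma there is a maximal family $\{f_i\}_{i\in I}$ of pairwise orthogonal non-zero $\tau$-finite projections in $\M$. Maximality combined with semifiniteness of $\tau$ forces $\sum_{i\in I} f_i=\mathbf{1}$ (otherwise by semifiniteness one could find a non-zero $\tau$-finite sub-projection of the complement, contradicting maximality), and $\sigma$-finiteness of $\M$ forces $I$ to be at most countable. Enumerating $I=\{1,2,\ldots\}$ and setting $q_k:=\sum_{i=1}^k f_i$ gives the required sequence.

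Next I would define $\{p_k\}$ recursively. Set $p_0=0$; given $p_{k-1}\in\mathcal{P}_f(\M,\tau)$, form $q_k':=p_{k-1}\vee q_k$. Since both $p_{k-1}$ and $q_k$ are $\tau$-finite, the Kaplansky formula $\tau(p_{k-1}\vee q_k)=\tau(p_{k-1})+\tau(q_k)-\tau(p_{k-1}\wedge q_k)$ shows that $q_k'$ is $\tau$-finite. I would then apply Lemma \ref{5.2 pre lem} with $q=q_k'$ and $\e=\tfrac{1}{k}$ to produce $p_k\in\mathcal{P}_f(\M,\tau)$ satisfying $p_k\ge q_k'$ and $\|[p_k,\alpha]\|_{\M}\le \tfrac{1}{k}$.

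By construction $p_k\ge p_{k-1}$, so $\{p_k\}$ is increasing; moreover $p_k\ge q_k$ and $q_k\uparrow\mathbf{1}$ force $p_k\uparrow\mathbf{1}$, while $\|[p_k,\alpha]\|_{\M}\le 1/k\to 0$. The whole argument is essentially bookkeeping; the only mildly delicate point is the existence of the countable exhaustion $q_k\uparrow\mathbf{1}$, which is where both hypotheses ($\sigma$-finiteness of $\M$ and semifiniteness of $\tau$) are used and which constitutes the main ``obstacle,'' though it is in fact standard.
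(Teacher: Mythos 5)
Your proposal is correct and follows essentially the same route as the paper: choose an increasing exhaustion $q_k\uparrow\mathbf{1}$ by $\tau$-finite projections (the paper simply invokes $\sigma$-finiteness, while you spell out the standard Zorn's-lemma argument) and then inductively apply Lemma \ref{5.2 pre lem} with $q=q_k\vee p_{k-1}$ and tolerance tending to $0$. The only cosmetic difference is your explicit check (via the Kaplansky-type trace identity) that $q_k\vee p_{k-1}$ is $\tau$-finite, which the paper leaves implicit.
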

\begin{proof} Since $\M$ is $\sigma$-finite, we may choose an increasing sequence $\{q_k\}_{k\in \mathbb{Z}_+}$ of $\tau$-finite projections such that $q_k\uparrow \mathbf{1}$. Let $p_{-1}=0$. Next, by induction, we construct a sequence $\{p_k\}_{k\in \mathbb{Z}_+}$ of $\tau$-finite projections such that
$$p_k\ge q_k\vee p_{k-1},\quad \|[p_k,\alpha]\|_{\M}\le (k+1)^{-1},\quad k\ge0.$$

By Lemma \ref{5.2 pre lem} (taken with $q=q_0$), there exists a $\tau$-finite projection $p_0\ge q_0$ such that $\|[p_0,\alpha]\|_{\M}\le 1.$ This provides the base for induction.

Suppose that $\{p_j\}_{j=0}^{k-1}$ are already constructed. By Lemma \ref{5.2 pre lem} (taken with $q=q_k\vee p_{k-1}$), there exists a $\tau$-finite projection $p_k\ge q_k\vee p_{k-1}$ such that $\|[p_k,\alpha]\|_{\M}\le (k+1)^{-1}.$ This provides the step for induction.

Since ${\bf 1}\ge p_k\ge q_k$ for each $k\ge1$ and $q_k\uparrow {\bf 1}$, it follows that $p_k\uparrow {\bf 1}$ (see \cite[Lemma B.4]{BSZZ2023}). This completes the proof.
\end{proof}

In the lemma below, we prove that $k_{\varPhi(\M)}(\delta)=0$ for any diagonal $n$-tuple $\delta\in\M^n$ and any given $n$-tuple $\varPhi$ of symmetric function spaces on $(0,\infty)$.

\begin{lemma}\label{diagonal has zero k} Let $\M$ be a $\sigma$-finite von Neumann algebra with a faithful normal semifinite trace $\tau$. Suppose $\varPhi=(E_1,\ldots,E_n)$ is an $n$-tuple of symmetric function spaces on $(0,\infty)$. Let $\delta\in\M^n$ be a diagonal $n$-tuple. We have $k_{\varPhi(\M)}(\delta)=0$.
\end{lemma}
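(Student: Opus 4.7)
The plan is to produce, for arbitrary $a \in \mathcal{F}_1^+(\M)$ and $\e > 0$, an $r \in \mathcal{F}_1^+(\M)$ with $r \geq a$ and $\|[r,\delta(j)]\|_{E_j(\M)} \leq \e$ for every $j$, which will give $k_{\varPhi(\M)}(\delta) = 0$ by definition. Since $\delta$ is a commuting $n$-tuple of diagonal self-adjoints, one obtains a simultaneous diagonalization $\delta(j) = \sum_k \lambda_k^{(j)} p_k$ with a common family $\{p_k\}$ of pairwise orthogonal projections summing (SOT) to $\mathbf{1}$. Using semifiniteness of $\tau$ and $\sigma$-finiteness of $\M$, each $p_k$ splits into countably many $\tau$-finite subprojections via Zorn, so I may assume WLOG that each $p_k$ is $\tau$-finite.

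With $q := \mathfrak{l}(a)$, $e_N := \sum_{k\le N} p_k$, and $f_N := \mathbf{1}-e_N$, set $\pi_N := \mathfrak{l}(f_N q) \leq f_N$, a $\tau$-finite projection. The Kaplansky formula gives $\tau(\pi_N) = \tau(e_N\vee q) - \tau(e_N) = \tau(q) - \tau(q\wedge e_N) \to 0$ as $N \to \infty$, since $e_N \uparrow \mathbf{1}$ forces $q \wedge e_N \uparrow q$. Applying Lemma~\ref{5.2 pre lem} inside the reduced algebra $f_N \M f_N$ to the commuting self-adjoint $n$-tuple $(f_N \delta(j))_j$ and the projection $\pi_N$ with precision $\eta>0$ yields a $\tau$-finite projection $\sigma \leq f_N$ with $\sigma \geq \pi_N$ and $\|[\sigma, f_N\delta(j)]\|_\M \leq \eta$; moreover, from the proof of that lemma, $\tau(\sigma) \leq m(\eta)\,\tau(\pi_N)$, where $m(\eta)$ is the number of $(\eta/2)$-cells needed to cover the (bounded) joint spectrum of $\delta$, hence depends only on $\eta$ and $\delta$, not on $N$.

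Set $r := e_N + \sigma$, a $\tau$-finite projection (orthogonal sum). To check $r \geq a$ it suffices to verify $r \geq q$. Since $\sigma$ is a projection dominating $\pi_N$, one has $\sigma\pi_N = \pi_N$, so for $\xi \in q(H)$ the decomposition $\xi = e_N\xi + f_N\xi$ with $f_N\xi \in \pi_N(H)$ gives $r\xi = e_N\xi + \sigma f_N\xi = e_N\xi + f_N\xi = \xi$, i.e.\ $r$ is the identity on $q(H)$; combined with $r\leq\mathbf{1}$ and the orthogonality $\langle r\xi_1, \xi_2 \rangle = \langle \xi_1, \xi_2\rangle = 0$ for $\xi_1 \in q(H)$, $\xi_2 \in q(H)^\perp$, this yields $r \geq q$. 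Because $e_N$ commutes with $\delta$ and $\sigma \leq f_N$, one has $[r,\delta(j)] = [\sigma, f_N\delta(j)]$, an operator of $\M$-norm $\leq \eta$ with left support of trace $\leq 2\tau(\sigma) \leq 2m(\eta)\tau(\pi_N)$. Hence $\mu([r,\delta(j)]) \leq \eta\,\chi_{[0,\,2m(\eta)\tau(\pi_N)]}$, so that
\[
\|[r,\delta(j)]\|_{E_j(\M)} \;\leq\; \eta\,\phi_{E_j}\bigl(2m(\eta)\tau(\pi_N)\bigr), \qquad \phi_{E_j}(T):=\|\chi_{[0,T]}\|_{E_j}.
\]

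To conclude, the two parameters $\eta$ and $N$ are balanced as follows: fix any $T_0 > 0$, set $\eta := \e/\max_j \phi_{E_j}(T_0)$ (positive and finite by Lemma~\ref{intermidiate lem}, since $\chi_{[0,T_0]} \in L_1\cap L_\infty \subseteq E_j$), and then choose $N$ so large that $2m(\eta)\tau(\pi_N) \leq T_0$; this is possible because $\tau(\pi_N) \to 0$. The resulting $r$ satisfies $\|[r,\delta(j)]\|_{E_j} \leq \eta\,\phi_{E_j}(T_0) \leq \e$ for every $j$. The main delicate point is precisely this hybrid balance: for symmetric spaces $E_j$ whose fundamental function $\phi_{E_j}$ does not vanish at $0$ (e.g.\ $L_\infty$) the smallness of $\eta$ supplies the control, while for spaces with $\phi_{E_j}(0^+)=0$ the decay $\tau(\pi_N) \to 0$ does; the construction $r = e_N + \sigma$ is designed so that a single choice of $r$ treats all components of $\varPhi$ simultaneously.
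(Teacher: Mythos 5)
Your reduction to a direct construction is natural, and most individual steps (the simultaneous diagonalization, the refinement into $\tau$-finite $p_k$, the verification that $r=e_N+\sigma$ is a projection dominating $q$, and the estimate $\mu([r,\delta(j)])\le\eta\,\chi_{[0,2\tau(\sigma)]}$) are fine. The proof breaks, however, at the claim that $\tau(\pi_N)\to0$: the assertion that $e_N\uparrow\mathbf{1}$ forces $q\wedge e_N\uparrow q$ is false, because the lattice operation $\wedge$ is not continuous along increasing limits. Already in $\M=B(\ell^2)$ with the standard trace, take $\delta$ diagonal with respect to the standard basis $(\xi_k)_{k\ge1}$, $p_k$ the rank-one projections onto $\mathbb{C}\xi_k$, and $a=q$ the rank-one projection onto the span of $\sum_{k\ge1}2^{-k}\xi_k$. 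Then $q\wedge e_N=0$ for every $N$, so your Kaplansky computation gives $\tau(\pi_N)=\tau(q)-\tau(q\wedge e_N)=1$ for all $N$; equivalently, $\mathfrak{l}(f_Nq)$ is a non-zero rank-one projection for every $N$. What normality of $\tau$ does give is $\tau(qf_Nq)\to0$, but the trace of the support $\mathfrak{r}(f_Nq)$ is not controlled by $\tau(qf_Nq)$, and it is precisely the trace of a support that enters your estimate.

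Without the decay of $\tau(\pi_N)$ the final balancing has no engine: the bound you are left with is $\eta\,\phi_{E_j}\bigl(2m(\eta)\tau(q)\bigr)$, and when the eigenvalues of $\delta$ accumulate (e.g.\ are dense in $[0,1]$) one has $m(\eta)\gtrsim\eta^{-1}$ uniformly in $N$, so for $E_j=L_1$ the bound is of order $\eta\,m(\eta)\tau(q)$, which does not tend to $0$ as $\eta\to0$; in that situation (dense spectrum, $q$ rank one in generic position, so that it is not almost contained in any finite sum of eigenprojections) the specific operator $r=e_N+\sigma$ genuinely fails to have small $\|[r,\delta]\|_{L_1(\M)}$, so this is a gap in the approach and not merely in the estimate. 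For comparison, the paper does not attempt a direct construction at all: it uses the continuous embedding $L_1\cap L_\infty\subset E_j$ (Lemma \ref{intermidiate lem}) to reduce the statement to $\varPhi=(L_1\cap L_\infty)^n$, quotes $k_{L_1(\M)}(\delta)=0$ for diagonal tuples from \cite[Proposition 3.8]{BSZZ2023} (where the hypothesis $L_1\not\subset L_\infty$ is what matters), and then passes from $L_1$ to $L_1\cap L_\infty$ by Lemma \ref{quasicentral modulus on bounded part}. To salvage your argument you would need a replacement for the false step, i.e.\ a way to dominate $a$ by elements of $\mathcal{F}_1^+(\M)$ whose commutators with $\delta$ are small simultaneously in the uniform norm and in trace-type norms, which is exactly the nontrivial content of the cited proposition.
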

\begin{proof} Recall that for every symmetric function space $E$ on $(0,\infty)$, $L_1\cap L_\infty$ is embedded continuously in $E$ (see Lemma \ref{intermidiate lem}). Thus, there exists a constant $C_{\varPhi}>0$ such that $\|\cdot\|_{\varPhi}\le C_{\varPhi}\|\cdot\|_{L_1\cap L_\infty}$ on $L_1\cap L_\infty$. Hence, by Definition \ref{quasicentral modulus def}, 
$$k_{\varPhi(\M)}(\delta)\leq C_{\varPhi}k_{(L_1\cap L_\infty)(\M)}(\delta).$$
Since $L_1\not\subset L_\infty$, it follows from \cite[Proposition 3.8]{BSZZ2023} that $k_{L_1(\M)}(\delta)=0$. Thus, Lemma \ref{quasicentral modulus on bounded part} immediately yields that $k_{(L_1\cap L_\infty)(\M)}(\delta)=0.$ Hence, $k_{\varPhi(\M)}(\delta)=0.$
\end{proof}

\section{Construction of the approximate identity map}\label{construction section}

Recall the approach used in the proof of \cite[Theorem 1.2]{BSZZ2023}, where $\M$ is an infinite $\sigma$-finite factor and $\alpha\in\M^n$ is a commuting self-adjoint $n$-tuple. Choose a sequence of pairwise orthogonal projections $\{p_k\}_{k\in \mathbb{N}}\subset \mathcal{P}(\M)$ such that $p_k\sim \mathbf{1}$ for each $k\in \mathbb{N}$ and $\sum_{k\in \mathbb{N}}p_k=\mathbf{1}.$ Choose a separating family $\{\psi_k\}_{k\in \mathbb{N}}$ of characters of $C^\ast(\alpha)$ (such a family exists since the spectrum of $C^{\ast}(\alpha)$ is compact and metrizable, hence, separable), and set
\begin{equation}\label{construction in the factor case}
\psi(b)=\sum_{k\in \mathbb{N}}\psi_k(b)p_k,\quad b\in C^\ast(\alpha).
\end{equation}
Clearly, the map $\psi:C^\ast(\alpha)\to\M$ constructed in \eqref{construction in the factor case} is a $\ast$-homomorphism. It is worth noting that the selection of the characters $\{\psi_k\}_{k\in \mathbb{N}}$ above is quite arbitrary, and the only condition is that this family separates $C^\ast(\alpha)$, which ensures that $\psi$ is injective (in other words, $\psi$ is a $\ast$-monomorphism). 

A key step in the proof of \cite[Theorem 1.2]{BSZZ2023} is showing that the $\ast$-monomorphism $\psi:C^\ast(\alpha)\to\M$ constructed in \eqref{construction in the factor case} satisfies $\psi\sim_{\M}{\rm Id}_{C^{\ast}(\alpha)}$ (see Section \ref{intro sec} for this notation). However, if $\M$ is not a factor, the $\ast$-monomorphism $\psi$ constructed in \eqref{construction in the factor case} may not satisfy $\psi\sim_{\M}{\rm Id}_{C^{\ast}(\alpha)}.$

%

Due to the above reason, we need to construct a $\ast$-monomorphism $\psi:C^\ast(\alpha)\to\M$ by a formula which roughly resembles \eqref{construction in the factor case} and such that $\psi\sim_{\M}{\rm Id}_{C^{\ast}(\alpha)}.$ This is precisely the aim of this section. This is done by replacing the separating family of characters $\{\psi_k\}_{k\in \mathbb{N}}$ on $C^\ast(\alpha)$ in \eqref{construction in the factor case} with a separating family of centre-valued $\ast$-homomorphisms $\{\psi_k:C^\ast(\alpha)\to \mathcal{Z}(\M)\}_{k\in \mathbb{N}}$. When $\M$ is a factor, i.e., $\mathcal{Z}(\M)=\mathbb{C}\mathbf{1}$, we retrieve a separating family of characters on $C^\ast(\alpha)$.

Let $\M$ be a properly infinite $\sigma$-finite von Neumann algebra. In order to construct a separating family of centre-valued $\ast$-homomorphisms $\{\psi_k:C^\ast(\alpha)\to \mathcal{Z}(\M)\}_{k\in \mathbb{N}}$, we actually construct $\psi_k$ on a larger $C^\ast$-algebra $C^\ast(G\cup Z)\supset C^\ast(\alpha)$, where $G$ is a family of spectral projections of $\alpha$ built on a dyadic partition of $\mathbb{R}^n$, $Z$ is the family of central supports for the elements in $G$ (see Construction \ref{section2 main construction}), and we employ the following technical assumption:  
$$WZ^\ast(\alpha)\cap \mathcal{P}_f(\M)=\{0\}.$$
(Recall that $WZ^\ast(\alpha)$ is the von Neumann subalgebra in $\M$ generated by $W^\ast(\alpha)$ and $\mathcal{Z}(\M)$.) This allows us to derive below a simple yet useful lemma.

\begin{lemma}\label{properly infinite lemma 1} Let $\M$ be a properly infinite von Neumann algebra. If $\alpha\in\M^n$ is a commuting self-adjoint $n$-tuple such that 
\begin{equation}\label{finite assumption}
WZ^\ast(\alpha)\cap \mathcal{P}_f(\M)=\{0\},
\end{equation} then every non-zero projection in $WZ^\ast(\alpha)$ is properly infinite.
\end{lemma}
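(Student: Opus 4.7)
The plan is to argue by contradiction, reducing the statement to a direct application of the hypothesis \eqref{finite assumption}. Suppose, toward a contradiction, that there exists a non-zero projection $p\in WZ^\ast(\alpha)$ that is not properly infinite. Appealing to the very definition of proper infiniteness (see \cite[Definition V.1.15]{T1}), this furnishes a central projection $z\in\mathcal{P}(\mathcal{Z}(\M))$ such that $zp\ne 0$ and $zp$ fails to be infinite in $\M$. Since every projection in a von Neumann algebra is either finite or infinite, this forces $zp$ to be a non-zero finite projection, i.e.\ $zp\in\mathcal{P}_f(\M)$.

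The decisive observation is now purely structural: $WZ^\ast(\alpha)$ was defined as the von Neumann subalgebra of $\M$ generated by $W^\ast(\alpha)\cup\mathcal{Z}(\M)$, so in particular $\mathcal{Z}(\M)\subset WZ^\ast(\alpha)$. Therefore both $z$ and $p$ lie in $WZ^\ast(\alpha)$, and so does their product $zp$. Thus $zp$ is a non-zero element of $WZ^\ast(\alpha)\cap\mathcal{P}_f(\M)$, directly contradicting the hypothesis \eqref{finite assumption}. Hence no such $p$ exists, and every non-zero projection in $WZ^\ast(\alpha)$ is properly infinite.

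I do not anticipate any real obstacle in executing this argument; it is essentially a one-line definition-chase once the finite/properly-infinite dichotomy of projections in $\M$ is invoked. Notably, the hypothesis that $\M$ itself be properly infinite is not actually required for the deduction and presumably appears in the statement only because it is the standing assumption of the ambient construction of Section \ref{construction section}.
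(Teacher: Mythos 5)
Your proof is correct. The negation of the paper's definition of proper infiniteness yields a central projection $z$ with $zp\ne0$ and $zp$ finite; since $z\in\mathcal{Z}(\M)\subset WZ^\ast(\alpha)$ and $z$ commutes with $p$, the element $zp$ is a non-zero finite projection in $WZ^\ast(\alpha)$, contradicting \eqref{finite assumption}. This is in substance the contrapositive of the paper's argument, but the tools differ slightly: the paper argues directly, invoking the central decomposition of an arbitrary projection into a properly infinite part $zp$ and a finite part $(\mathbf{1}-z)p$ (\cite[Proposition V.1.19]{T1}), and then uses \eqref{finite assumption} to conclude $(\mathbf{1}-z)p=0$, so that $p=zp$ is properly infinite; your version needs no such decomposition result, only the definition of proper infiniteness and the fact that a projection is finite or infinite, so it is marginally more elementary. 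In both arguments the decisive point is identical: a central cut of $p$ stays inside $WZ^\ast(\alpha)$, so the hypothesis forces any finite central cut to vanish. Your closing observation is also accurate: neither your argument nor the paper's uses proper infiniteness of $\M$ itself; that hypothesis is simply the standing assumption of the ambient construction in Section \ref{construction section}.
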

\begin{proof}  Suppose $0\ne p\in WZ^\ast(\alpha)$ is a projection. There exists a central projection $z\in \M$ such that $zp$ is properly infinite and that $(\mathbf{1}-z)p$ is finite (see e.g. \cite[Proposition V.1.19]{T1}). Obviously, $(\mathbf{1}-z)p\in WZ^\ast(\alpha)$. Thus, by \eqref{finite assumption}, $(\mathbf{1}-z)p=0.$ Hence, $p=zp$ is properly infinite.
\end{proof}


\begin{lemma}\label{properly infinite sequence lemma} Let $\M$ be a properly infinite von Neumann algebra. If $\{p_k\}_{k\in \mathbb{N}}$ is a sequence of pairwise orthogonal properly infinite projections in $\M$ such that $\mathbf{1}=\sum_{k\in \mathbb{N}}p_k$, then $$WZ^\ast(\{p_k\}_{k\in \mathbb{N}})\cap \mathcal{P}_f(\M)=\{0\}.$$
\end{lemma}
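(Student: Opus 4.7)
The plan is to analyze the structure of $WZ^\ast(\{p_k\}_{k\in\mathbb{N}})$ very explicitly and reduce a hypothetical finite projection to a direct sum of cut-downs $z_k p_k$, where the proper infiniteness of each $p_k$ forces $z_k p_k=0$.

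First I would observe that the family $\{p_k\}_{k\in\mathbb{N}}$ is mutually commuting (being pairwise orthogonal) and that each $p_k$ commutes with every element of $\mathcal{Z}(\M)$. Hence the generating set $\{p_k\}\cup\mathcal{Z}(\M)$ consists of pairwise commuting self-adjoint elements, so $\mathcal{N}:=WZ^\ast(\{p_k\}_{k\in\mathbb{N}})$ is a commutative von Neumann subalgebra of $\M$. Using $\sum_k p_k=\mathbf{1}$ (in the strong operator topology) together with the fact that $p_k$ is a central projection of $\mathcal{N}$, every $x\in\mathcal{N}$ decomposes as $x=\sum_{k} xp_k$ with $xp_k\in\mathcal{N} p_k$.

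Next I would identify the corner $\mathcal{N} p_k$. Since $p_k p_j=\delta_{kj}p_k$, the algebra $p_k W^\ast(\{p_k\}_{k\in\mathbb{N}})=\mathbb{C}p_k$, and therefore $\mathcal{N} p_k=\mathcal{Z}(\M)p_k$. Any projection $e$ of $\mathcal{N} p_k$ is of the form $e=zp_k$ for some $z\in\mathcal{Z}(\M)$, and since $\mathcal{Z}(\M)p_k$ is commutative, after replacing $z$ by its real part and then by $zc(p_k)$ we may take $z\in\mathcal{P}(\mathcal{Z}(\M))$. Consequently, every projection $q\in\mathcal{N}$ has a unique representation
\[
q=\sum_{k\in\mathbb{N}}z_k p_k,\qquad z_k\in\mathcal{P}(\mathcal{Z}(\M)).
\]

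Finally, suppose $q\in\mathcal{N}\cap\mathcal{P}_f(\M)$ and write $q=\sum_k z_k p_k$ as above. For each $k$, $z_k p_k\le q$, so $z_k p_k$ is finite. On the other hand, $z_k p_k$ is the cut-down of the properly infinite projection $p_k$ by the central projection $z_k$, hence is either zero or infinite. The only possibility is $z_k p_k=0$ for all $k$, so $q=0$. The potential pitfall is justifying the decomposition $q=\sum_k z_kp_k$ rigorously (commutativity of $\mathcal{N}$ and the identification $\mathcal{N} p_k=\mathcal{Z}(\M)p_k$), but once that is in place the conclusion is immediate from the definition of properly infinite projection.
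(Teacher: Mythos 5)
Your proposal is correct and follows essentially the same route as the paper: the paper's proof simply asserts that every projection in $WZ^\ast(\{p_k\}_{k\in\mathbb{N}})$ has the form $\sum_k z_k p_k$ with $z_k\in\mathcal{P}(\mathcal{Z}(\M))$ and then notes each summand is zero or infinite, which is exactly your final step. Your only addition is to justify that structural claim (commutativity of the generated algebra and the identification $\mathcal{N}p_k=\mathcal{Z}(\M)p_k$), which the paper leaves implicit.
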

\begin{proof} Every projection in $WZ^\ast(\{p_k\}_{k\in \mathbb{N}})$ is of the shape $\sum_{k\in \mathbb{N}}p_kf_k$ where $f_k\in \mathcal{P}(\mathcal{Z}(\M))$ for all $k\in \mathbb{N}.$ Since $p_k$ is properly infinite, it follows that $p_kf_k$ is either infinite or zero. Hence, every projection in $WZ^\ast(\{p_k\}_{k\in \mathbb{N}})$ is either infinite or zero.
\end{proof}

We need the following topological lemmas for totally disconnected compact metrizable spaces. The first one is elementary. We could not locate a complete proof of it in the literature and decided to prove it for convenience of the reader.

\begin{lemma}\label{clopen lem} Let $X$ be a totally disconnected compact metrizable space. There exists a countable base of topology in $X$ which consists of clopen sets.
\end{lemma}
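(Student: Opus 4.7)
The plan is to prove the lemma in two steps: first establish that clopen sets form a base for the topology of $X$ (the standard equivalence of totally disconnected and zero-dimensional in the compact Hausdorff setting), and then extract a countable subfamily using the metrizability hypothesis.

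For the first step, I would argue as follows. Fix $x\in X$ and an open neighborhood $U$ of $x$. In a compact Hausdorff space, the connected component of $x$ coincides with its quasi-component, i.e., the intersection of all clopen sets containing $x$; since $X$ is totally disconnected, this intersection equals $\{x\}$. Writing $F=X\setminus U$, we obtain
$$F\subset X\setminus\bigcap\{C:C\text{ clopen},\ x\in C\}=\bigcup\{X\setminus C:C\text{ clopen},\ x\in C\}.$$
Because $F$ is closed in the compact space $X$ it is itself compact, so finitely many sets $X\setminus C_1,\ldots,X\setminus C_m$ cover $F$. Then $V:=C_1\cap\cdots\cap C_m$ is a finite intersection of clopen sets, hence clopen, and satisfies $x\in V\subset U$. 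Thus the family of clopen sets forms a base for the topology of $X$.

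For the second step, since $X$ is compact metrizable it is second countable, and in particular Lindel\"of. Pick a countable base $\{U_n\}_{n\in\mathbb{N}}$ for the topology of $X$. By the first step, for each $n$ the open set $U_n$ is a union of clopen subsets of $U_n$; applying the Lindel\"of property to this open cover of the subspace $U_n$, I can extract a countable subfamily $\{V_{n,k}\}_{k\in\mathbb{N}}$ of clopen sets with $U_n=\bigcup_{k}V_{n,k}$. Setting
$$\mathcal{B}=\{V_{n,k}:n,k\in\mathbb{N}\},$$
I claim that $\mathcal{B}$ is the desired countable base. Indeed, given any open $U\subset X$ and any $x\in U$, there exists $n$ with $x\in U_n\subset U$, and then some $k$ with $x\in V_{n,k}\subset U_n\subset U$.

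There is no serious obstacle here: the only mildly delicate point is the zero-dimensionality argument in the first step, which depends on the equality of connected component and quasi-component in compact Hausdorff spaces. Everything else is routine use of second countability and the Lindel\"of property.
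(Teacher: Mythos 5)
Your proof is correct, and it follows the same two-stage outline as the paper (clopen sets form a base, then extract a countable subfamily), but the mechanisms differ at both stages. For the first stage the paper simply cites a standard reference for the fact that a totally disconnected compact metric space has a base of clopen sets, whereas you reprove that fact via the coincidence of components and quasi-components in compact Hausdorff spaces together with a compactness argument on $F=X\setminus U$; your version is self-contained, and the argument (finitely many complements $X\setminus C_1,\ldots,X\setminus C_m$ covering $F$, then $V=C_1\cap\cdots\cap C_m$) is sound. For the second stage the paper works directly with the metric: for each $m$ it covers $X$ by clopen sets of diameter less than $1/m$, uses compactness of $X$ to pass to finite subfamilies $\{A^m_i\}_{i\in I_m}$, and verifies that the countable union $\Gamma=\cup_{m\ge1}\{A^m_i\}_{i\in I_m}$ is a base by comparing diameters with balls $B(x_0,\tfrac1m)\subset U$. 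You instead invoke second countability and the Lindel\"of property of the open subspaces $U_n$ from a fixed countable base to refine each $U_n$ by countably many clopen sets $V_{n,k}$, and the verification that $\{V_{n,k}\}_{n,k}$ is a base is correct. Both extractions are routine; yours uses neither compactness nor the metric at this stage, so it proves the slightly more general statement that any zero-dimensional second countable space has a countable clopen base, while the paper's argument is more hands-on and keeps explicit metric control of the basic sets.
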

\begin{proof} Let us equip $X$ with a metric $\rho_X.$

As $X$ is metric and totally disconnected, it has a base of topology consisting of clopen sets (see e.g., \cite[Theorem 29.7]{Willard1970}). We show that there exists a countable base which consists of clopen sets.

For every $m\in \mathbb{N},$ $X=\cup_{x\in X} U_x^m$ where $U_x^m$ is the open ball centred at $x\in X$ with diameter $1/m$. Each $U_x^m$ can be written as a union of clopen subsets. Hence, there is a family $\{A_i^m\}_{i\in I_m}$ of clopen subsets of $X$ such that $X=\cup_{i\in I_m}A_i^m$ and that ${\rm diam}(A_i^m)<1/m$ for $i\in I_m$. As $X$ is compact, we may choose a finite sub-cover from $\{A_i^m\}_{i\in I_m}$. For simplicity, let us just say that $I_m$ is finite. 

Set 
\begin{equation}\label{def of Gamma class}
\Gamma=\cup_{m\ge1}\{A^m_i\}_{i\in I_m}.
\end{equation} 

Let $U\subset X$ be open and let $x_0\in U.$ There exists $m\in\mathbb{N}$ such that 
$$B(x_0,\frac1m):=\{x\in X:\ \rho_X(x_0,x)<\frac1m\}\subset U.$$ 
Choose $i\in I_m$ such that $x_0\in A^m_i$, and thus
$$A^m_i\subset B(x_0,\frac1m)\subset U.$$
Hence, $\Gamma$ is a base of topology in $X.$
\end{proof}

The next lemma follows from the fact that a continuous mapping of a compact metrizable space has a Borel inverse mapping \cite[Theorem 6.9.7]{Bogachev}. 

\begin{lemma}\label{app1}
Let $X$ be a totally disconnected compact metrizable space and let $Y$ be a Hausdorff topological space. Let $\pi:X\to Y$ be a continuous surjective map. There exists a family of Borel mappings  $\{\pi_k:Y\to X\}_{k\in \mathbb{N}}$ such that
\begin{enumerate}[\rm (i)]
\item\label{point preserve} $\pi\circ\pi_k={\rm Id}_Y;$
\item\label{point equiv} for every open $A\subset X,$ we have
$$A\subset \cup_{k\in\mathbb{N}}(\pi_k\circ\pi)^{-1}(A);$$
\item\label{functions point ineq} for every $0\le f\in C(X),$ we have
$$\sup_{k\in\mathbb{N}}f\circ\pi_k\circ\pi\geq f.$$
\end{enumerate}
\end{lemma}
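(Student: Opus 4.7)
The plan is to build each $\pi_k$ by combining Borel right inverses of $\pi$ defined over a countable clopen base of $X$. First, I would invoke Lemma \ref{clopen lem} to obtain a countable base $\{A_k\}_{k\in\mathbb{N}}$ of the topology of $X$ consisting of clopen sets. Each $A_k$, being clopen in the compact metric space $X$, is itself compact and metrizable, and its image $\pi(A_k)$ is compact (hence closed, hence Borel) in the Hausdorff space $Y$. Applying \cite[Theorem 6.9.7]{Bogachev} to the continuous surjection $\pi|_{A_k}\colon A_k\to \pi(A_k)$, I get a Borel map $s_k\colon\pi(A_k)\to A_k$ with $\pi\circ s_k=\mathrm{Id}_{\pi(A_k)}$; the same theorem applied to $\pi$ itself produces a Borel right inverse $s_0\colon Y\to X$.

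Next, I would define $\pi_k\colon Y\to X$ by the two-piece formula $\pi_k(y)=s_k(y)$ for $y\in\pi(A_k)$ and $\pi_k(y)=s_0(y)$ for $y\in Y\setminus\pi(A_k)$. Since $\pi(A_k)$ is Borel in $Y$, $\pi_k$ is a Borel map; by construction $\pi\circ\pi_k=\mathrm{Id}_Y$, giving \eqref{point preserve}. The key observation is that if $x\in A_k$ then $\pi(x)\in\pi(A_k)$, so $\pi_k(\pi(x))=s_k(\pi(x))\in A_k$; that is, $A_k\subset(\pi_k\circ\pi)^{-1}(A_k)$.

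For \eqref{point equiv}, any open $A\subset X$ can be written as $A=\bigcup_{k\in K}A_k$ for some $K\subset\mathbb{N}$ because $\{A_k\}$ is a base; the previous observation then yields
$$A=\bigcup_{k\in K}A_k\subset\bigcup_{k\in K}(\pi_k\circ\pi)^{-1}(A_k)\subset\bigcup_{k\in\mathbb{N}}(\pi_k\circ\pi)^{-1}(A).$$
For \eqref{functions point ineq}, given any $x\in X$ and $\varepsilon>0$, the set $\{z\in X:f(z)>f(x)-\varepsilon\}$ is open and contains $x$, so \eqref{point equiv} supplies some $k$ with $f(\pi_k(\pi(x)))>f(x)-\varepsilon$; letting $\varepsilon\to0$ gives $\sup_{k}(f\circ\pi_k\circ\pi)(x)\geq f(x)$.

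The only non-trivial ingredient is the Borel section theorem used to produce the $s_k$, which is precisely the result cited in the remark preceding the statement; the rest is a clean piecing-together of these sections along the clopen base. The main point requiring care is verifying that $\pi(A_k)$ is Borel in $Y$, which is immediate from compactness of $A_k$ and the Hausdorff assumption on $Y$.
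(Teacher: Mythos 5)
Your proposal is correct, and it rests on exactly the same two ingredients as the paper's proof: the countable clopen base from Lemma \ref{clopen lem} and the Borel section theorem \cite[Theorem 6.9.7]{Bogachev} applied to $\pi$ restricted to compact pieces. Where you differ is in how the local sections are assembled into globally defined maps $\pi_k:Y\to X$. The paper never invokes a single global section; instead, for each finite permutation $\sigma$ of $\mathbb{N}$ it glues the local sections $\varpi_{\sigma(i)}$ along the partition $\pi(X_{\sigma(i)})\setminus\bigcup_{j<i}\pi(X_{\sigma(j)})$ of $Y$, using the fact that $\bigcup_i\pi(X_i)=Y$, and obtains property \eqref{point equiv} by choosing $\sigma$ with $\sigma(1)=i$. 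You instead apply the Bogachev theorem once more to $\pi$ itself (legitimate, since $X$ is compact metrizable and $Y=\pi(X)$ is Hausdorff) to get a global Borel section $s_0$, and then define $\pi_k$ by a two-piece formula: $s_k$ on the closed (hence Borel) set $\pi(A_k)$ and $s_0$ elsewhere. This is a genuine simplification of the bookkeeping --- each $\pi_k$ has only two Borel pieces, the index set is directly $\mathbb{N}$ rather than the group of finite permutations, and the key inclusion $A_k\subset(\pi_k\circ\pi)^{-1}(A_k)$ is immediate --- while losing nothing, since the global section exists by the very theorem already being used. Your derivations of \eqref{point equiv} from the base property and of \eqref{functions point ineq} by a pointwise $\varepsilon$-argument on the open level set $\{f>f(x)-\varepsilon\}$ are also correct and essentially equivalent to the paper's argument via indicator functions of the sets $A_\varepsilon=\{f>\varepsilon\}$.
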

\begin{proof} By Lemma \ref{clopen lem}, there exists $\Gamma=\{X_i\}_{i\in\mathbb{N}}$ --- a countable base of topology in $X$ such that each $X_i$ is clopen. Since each $X_i$ is closed and since $X$ is compact, it follows that each $X_i$ is compact. For each $i\in\mathbb{N},$ it follows from \cite[Theorem 6.9.7]{Bogachev} that there exists a Borel mapping $\varpi_i:\pi(X_i)\to X_i$ such that
\begin{equation}\label{bogachev equality} 
\pi\circ \varpi_i ={\rm Id}_{\pi(X_i)}.
\end{equation}

Since $\pi$ is surjective and $\cup_{i\in \mathbb{N}}X_i=X$, it follows that 
\begin{equation}\label{surjective eq}
\cup_{i\in \mathbb{N}}\pi(X_i)=Y.
\end{equation}
Let $\mathfrak{S}$ denote the group of all {\it finite} permutations of $\mathbb{N}.$ For every $\sigma\in\mathfrak{S},$ define the mapping $\pi_{\sigma}:Y\to X$ by the formula
$$\pi_{\sigma}(y)=\varpi_{\sigma(i)}(y),\quad y\in\pi(X_{\sigma(i)})\backslash\cup_{j<i}\pi(X_{\sigma(j)}),\quad i\in\mathbb{N}.$$
We aim to verify the assertions of the lemma for the collection $\{\pi_{\sigma}\}_{\sigma\in\mathfrak{S}}.$

Fix $\sigma\in\mathfrak{S}.$ For every $y\in Y,$ by \eqref{surjective eq}, there exists a unique $i\in\mathbb{N}$ such that
$$y\in\pi(X_{\sigma(i)})\backslash\cup_{j<i}\pi(X_{\sigma(j)}).$$
We have
$$\pi(\pi_{\sigma}(y))=\pi(\varpi_{\sigma(i)}(y))\stackrel{\eqref{bogachev equality}}{=}y.$$
Since $\sigma\in \mathfrak{S}$ is  arbitrary, this yields \eqref{point preserve}.

We now turn to the assertion \eqref{point equiv}. Let $i\in\mathbb{N}.$ Choose (any) $\sigma\in\mathfrak{S}$ such that $\sigma(1)=i.$ We have
$$\pi_{\sigma}(y)=\varpi_{\sigma(1)}(y)=\varpi_i(y),\quad y\in\pi(X_{\sigma(1)})=\pi(X_i).$$
Thus, $\pi_{\sigma}(\pi(X_i))\subset X_i$ or, equivalently, $X_i\subset(\pi_{\sigma}\circ\pi)^{-1}(X_i).$ Hence,
$$X_i\subset\cup_{\sigma\in\mathfrak{S}}(\pi_{\sigma}\circ\pi)^{-1}(X_i).$$

Let $A\subset X$ be an open subset. Since $\Gamma$ is a base of the topology on $X,$ it follows that there exists a subset $I_A\subset\mathbb{N}$ such that
$$A=\cup_{i\in I_A}X_i.$$
For every $i\in I_A,$ it follows from the preceding paragraph that
$$X_i\subset\cup_{\sigma\in\mathfrak{S}}(\pi_{\sigma}\circ\pi)^{-1}(X_i)\subset \cup_{\sigma\in\mathfrak{S}}(\pi_{\sigma}\circ\pi)^{-1}(A).$$
Thus,
$$A\subset\cup_{\sigma\in\mathfrak{S}}(\pi_{\sigma}\circ\pi)^{-1}(A).$$
This proves the assertion \eqref{point equiv}.  

To see \eqref{functions point ineq}, set $A_{\varepsilon}=\{x\in X:\ f(x)>\varepsilon\}$ for every $\varepsilon\ge0.$ For every $\sigma\in \mathfrak{S}$, we have
$$f\circ\pi_{\sigma}\circ\pi\geq \varepsilon\chi_{A_{\varepsilon}}\circ\pi_{\sigma}\circ\pi=\varepsilon\chi_{(\pi_{\sigma}\circ\pi)^{-1}(A_{\varepsilon})}.$$
Thus,
$$\sup_{\sigma\in\mathfrak{S}}f\circ\pi_{\sigma}\circ\pi\geq \varepsilon\sup_{\sigma\in\mathfrak{S}}\chi_{(\pi_{\sigma}\circ\pi)^{-1}(A_{\varepsilon})}=\varepsilon\chi_{\cup_{\sigma\in\mathfrak{S}}(\pi_{\sigma}\circ\pi)^{-1}(A_{\varepsilon})}.$$
Since $A_{\varepsilon}$ is open, it follows from \eqref{point equiv} that $\chi_{\cup_{\sigma\in\mathfrak{S}}(\pi_{\sigma}\circ\pi)^{-1}(A_{\varepsilon})}\ge \chi_{A_{\e}}$. Thus,
$$\sup_{\sigma\in\mathfrak{S}}f\circ\pi_{\sigma}\circ\pi\geq \varepsilon\chi_{A_{\varepsilon}}.$$
Since $\varepsilon\ge0$ is arbitrary, the assertion \eqref{functions point ineq} follows from the equality $f=\sup_{\e\ge0}\e\chi_{A_\e}.$
\end{proof}

Suppose $\nu$ is a Borel measure on $X$. We let $[f]_{\nu}$ denote the equivalence class of a bounded Borel measurable function $f:X\to \mathbb{C}$ with respect to the measure $\nu$. Define
$$L_{\infty}(X,\nu)=\{[f]_{\nu}:\ f:X\to \mathbb{C}\text{ is bounded and Borel measurable}\}.$$

Suppose $\{[f_i]_{\nu}\}_{i\in I}$ is a family of uniformly bounded elements in $L_{\infty}(X,\nu)$, we let $\vee_{i\in I} [f_i]_{\nu}$ denote the least upper bound of $\{[f_i]_{\nu}\}_{i\in I}$ in $L_{\infty}(X,\nu).$

\begin{lemma}\label{app2} Let $X,$ $Y,$ $\pi$ and $\{\pi_k\}_{k\in\mathbb{N}}$ be as in Lemma \ref{app1}. Let $\nu$ be a Borel measure on $X$ such that $\nu(A)>0$ for every non-empty open $A\subset X.$ Let $\nu\circ\pi^{-1}$ denote the measure $B\mapsto \nu(\pi^{-1}(B)),B\in \mathfrak{B}(Y).$
We have
\begin{enumerate}[{\rm (i)}]
\item\label{app2a} the natural embedding $f\mapsto [f]_{\nu}$ (respectively, $g\mapsto[g]_{\nu\circ\pi^{-1}}$) delivers an isometric $\ast$-monomorphism from $C(X)$ to $L_{\infty}(X,\nu)$ (respectively, from $C(Y)$ to $L_{\infty}(Y,\nu\circ\pi^{-1})$);
\item\label{app2b} $g\circ\pi\circ\pi_k=g$ for all $g\in C(Y)$;
\item\label{app2c} if $0\le f\in C(X)$, then
$$\vee_{k\in\mathbb{N}}[f\circ\pi_k\circ\pi]_{\nu}\geq [f]_{\nu}$$
in $L_{\infty}(X,\nu);$
\item\label{app2d} if $0\leq f\in C(X)$ is such that
$$[f\circ \pi_k]_{\nu\circ\pi^{-1}}=0,\quad \forall k\in\mathbb{N}$$
in $L_{\infty}(Y,\nu\circ\pi^{-1}),$ then $f=0.$
\end{enumerate}
\end{lemma}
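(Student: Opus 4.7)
The plan is to reduce each of the four items to elementary bookkeeping, the crucial hypothesis being that $\nu$ charges every non-empty open set. I would start with \eqref{app2a}: the embedding $f\mapsto[f]_{\nu}$ is tautologically a unital $\ast$-homomorphism, so the only content is the isometry. The inequality $\|[f]_{\nu}\|_{\infty}\leq\|f\|_{C(X)}$ is immediate, and for the reverse I would fix $\e>0$ and observe that the open set $\{x\in X:|f(x)|>\|f\|_{C(X)}-\e\}$ is non-empty by continuity, hence of positive $\nu$-measure, yielding $\|[f]_{\nu}\|_{\infty}\geq\|f\|_{C(X)}-\e$. For the $Y$-side I would first verify that $\nu\circ\pi^{-1}$ inherits the same positivity property: if $U\subset Y$ is non-empty and open, then $\pi^{-1}(U)$ is open in $X$ and non-empty by surjectivity of $\pi$, so $(\nu\circ\pi^{-1})(U)>0$; then the same argument applies verbatim on $Y$.

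Part \eqref{app2b} is immediate from Lemma \ref{app1}\eqref{point preserve}: $g\circ\pi\circ\pi_k=g\circ(\pi\circ\pi_k)=g\circ{\rm Id}_Y=g$. For \eqref{app2c}, Lemma \ref{app1}\eqref{functions point ineq} supplies the pointwise inequality $\sup_{k\in\mathbb{N}}f\circ\pi_k\circ\pi\geq f$ on $X$. Since the index set is countable, the pointwise supremum is Borel measurable, uniformly bounded by $\|f\|_{C(X)}$, and its $\nu$-equivalence class coincides with the least upper bound $\vee_{k\in\mathbb{N}}[f\circ\pi_k\circ\pi]_{\nu}$ taken in the order of $L_{\infty}(X,\nu)$; hence the inequality descends to $L_{\infty}(X,\nu)$.

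Finally, \eqref{app2d} combines \eqref{app2a} and \eqref{app2c}. The hypothesis $[f\circ\pi_k]_{\nu\circ\pi^{-1}}=0$ says that the Borel set $N_k=\{y\in Y:(f\circ\pi_k)(y)\neq 0\}$ satisfies $\nu(\pi^{-1}(N_k))=(\nu\circ\pi^{-1})(N_k)=0$; but $\pi^{-1}(N_k)$ is precisely the set on which $f\circ\pi_k\circ\pi$ fails to vanish, so $[f\circ\pi_k\circ\pi]_{\nu}=0$ in $L_{\infty}(X,\nu)$ for every $k$. Applying \eqref{app2c} then forces $[f]_{\nu}\leq\vee_{k\in\mathbb{N}}[f\circ\pi_k\circ\pi]_{\nu}=0$, and the isometry from \eqref{app2a} yields $f=0$ in $C(X)$. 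No step presents a genuine obstacle; the only spot calling for even mild care is \eqref{app2c}, where identifying the essential supremum in $L_{\infty}(X,\nu)$ with the pointwise supremum is legitimate precisely because the family is countable.
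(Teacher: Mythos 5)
Your proof is correct and follows essentially the same route as the paper: parts \eqref{app2b}--\eqref{app2d} are argued identically, with \eqref{app2d} reduced to \eqref{app2a} and \eqref{app2c} by pulling the null sets $N_k$ back along $\pi$, and the countability of the family is what justifies identifying the lattice supremum in $L_\infty(X,\nu)$ with the pointwise supremum, just as in the paper. The only cosmetic difference is in \eqref{app2a}, where you verify the isometry directly from the fact that $\nu$ charges every non-empty open set, whereas the paper uses that same fact only to get injectivity and then quotes the standard result that an injective $\ast$-homomorphism of $C^\ast$-algebras is isometric.
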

\begin{proof} Every $\ast$-monomorphism of a $C^{\ast}$-algebra into another $C^\ast$-algebra is an isometry (see e.g. \cite[Corollary I.5.4]{T1}). If $f\in C(X)$ is such that $[f]_{\nu}=0,$ then $\nu(\{f\neq0\})=0.$ Since $\{f\neq0\}$ is open, it follows from the assumption on $\nu$ that $\{f\neq0\}=\varnothing.$ Hence, $f=0.$ If $g\in C(Y)$ is such that $[g]_{\nu\circ\pi^{-1}}=0,$ then 
$$(\nu\circ\pi^{-1})(\{g\ne0\})=0.$$
Since $\{g\neq0\}$ is open and, therefore, $\pi^{-1}(\{g\neq0\})$ is open, it follows from the assumption on $\nu$ that $\pi^{-1}(\{g\neq0\})=\varnothing.$ Since $\pi$ is surjective, it follows that $\{g\neq0\}=\varnothing.$ Hence, $g=0.$ This proves \eqref{app2a}

The assertion \eqref{app2b} follows immediately from Proposition \ref{app1} \eqref{point preserve}.

For every $0\le f\in C(X)$ by Lemma \ref{app1} \eqref{functions point ineq}, 
$$\vee_{k\in\mathbb{N}}f\circ\pi_k\circ\pi\geq f.$$
Since, for every sequence $\{f_k\}_{k\in\mathbb{N}}$ of Borel measurable functions, we have
$$\vee_{k\in\mathbb{N}}[f_k]_{\nu}=[\vee_{k\in\mathbb{N}}f_k]_{\nu},$$
it follows that
$$\vee_{k\in\mathbb{N}}[f\circ\pi_k\circ\pi]_{\nu}\geq [f]_{\nu}.$$
This completes the proof of \eqref{app2c}.

To see \eqref{app2d}, fix $0\leq f\in C(X)$ such that
$$[f\circ\pi_k]_{\nu\circ\pi^{-1}}=0\quad\text{ for all } k\in\mathbb{N}.$$
Namely, for each $k\in \mathbb{N}$, there exists a ($\nu\circ\pi^{-1}$)-null subset $N_k\subset Y$ such that
$$(f\circ\pi_k)(y)=0,\quad y\in Y\setminus N_k.$$
Then 
$$(f\circ\pi_k\circ\pi)(x)=(f\circ\pi_k)(\pi(x))=0,\quad x\in \pi^{-1}(Y\setminus N_k).$$
Note that $\pi^{-1}(Y\setminus N_k)=X\setminus\pi^{-1}(N_k).$ Since $N_k$ is ($\nu\circ\pi^{-1}$)-null, it follows that $\pi^{-1}(N_k)$ is $\nu$-null. Thus, 
$$[f\circ\pi_k\circ\pi]_{\nu}=0\quad\text{ for all } k\in \mathbb{N}.$$
Applying \eqref{app2c}, we obtain $0\geq [f]_{\nu}$. Since $f\geq0$ in $C(X),$ it follows that also $[f]_\nu\ge 0$ and thus $[f]_{\nu}=0.$ By \eqref{app2a}, $f=0$ in $C(X).$
\end{proof}

Suppose $X$ is a topological space, we let $\mathfrak{B}(X)$ denote the Borel $\sigma$-algebra in $X.$ Let $H$ be a Hilbert space and let $\nu:\mathfrak{B}(X)\to \mathcal{P}(B(H))$ be a spectral measure (see \cite[Definition 12.17]{Rudin1991}). Let $[f]_{\nu}$ denote the equivalence class of a bounded Borel measurable function $f:X\to \mathbb{C}$ with respect to the spectral measure $\nu$. Following \cite{Rudin1991} (see e.g. p.318 there), we define
$$L_{\infty}(X,\nu)=\{[f]_{\nu}:\ f:X\to \mathbb{C}\text{ is bounded and Borel measurable}\}.$$

For a countable commuting family of projections $G\subset \mathcal{P}(\M)$ and a countable family of central projections $Z$ in $\M$, let $\mathcal{A}=C^\ast(G\cup Z)$. Now, based on the standard Gelfand-Naimark equivalence theory and Lemma \ref{app2}, we are prepared to give the following result of the existence of a separating family $\{\psi_k\}_{k\in \mathbb{N}}$ of centre-valued $\ast$-homomorphisms on $\mathcal{A}=C^\ast(G\cup Z)$ such that each $\psi_k$ is an identity map on the $C^\ast$-subalgebra $C^\ast(Z)$.

\begin{proposition}\label{separating family prop} Let $\M$ be a von Neumann algebra. Suppose $G\subset \mathcal{P}(\M)$ is a countable commuting family of projections, and $Z\subset\mathcal{P}(\mathcal{Z}(\M))$ is a countable family of central projections in $\M$ such that $\mathbf{1}\in Z.$ Let $\mathcal{A}=C^\ast(G\cup Z)$ and $\mathcal{B}=C^\ast(Z)$. There exists a family $\{\psi_k\}_{k\in\mathbb{N}}\subset{\rm Hom}(\mathcal{A},\mathcal{Z}(\M))$ such that
\begin{enumerate}[\rm (i)]
\item\label{point identity} $\psi_k|_{\mathcal{B}}={\rm Id}_{\mathcal{B}}$;
\item\label{point faithful} for every $0\ne a\in \mathcal{A}$ there exists $k\in \mathbb{N}$ such that $\psi_k(a)\ne0$;
\item\label{point projection} $p\le \vee_{k\in \mathbb{N}} \psi_k(p)$ for every $p\in G.$
\end{enumerate}
\end{proposition}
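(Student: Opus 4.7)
The plan is to translate the problem into Gelfand--Naimark duality and then build $\psi_k$ from Borel sections of the dual map, using the Borel functional calculus inside the centre $\mathcal{Z}(\M)$ to guarantee that the image of $\psi_k$ lies in $\mathcal{Z}(\M).$ Since $\mathcal{A}=C^{\ast}(G\cup Z)$ is separable and generated by its projections, Lemma~\ref{Gelfand lem} gives that $X:=\mathrm{Spec}(\mathcal{A})$ and $Y:=\mathrm{Spec}(\mathcal{B})$ are totally disconnected compact metrizable spaces. The inclusion $\mathcal{B}\hookrightarrow\mathcal{A}$ corresponds by Lemma~\ref{embedding lem} to a continuous surjection $\pi:X\to Y,$ along which the inclusion reads $g\mapsto g\circ\pi.$ Applying Lemma~\ref{app1} to $\pi$ yields Borel sections $\{\pi_k:Y\to X\}_{k\in\mathbb{N}}$ with $\pi\circ\pi_k=\mathrm{Id}_Y$ and $A\subset\bigcup_{k\in\mathbb{N}}(\pi_k\circ\pi)^{-1}(A)$ for every open $A\subset X.$

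Let $E_{\mathcal{A}}:\mathfrak{B}(X)\to\mathcal{P}(\M)$ and $E_{\mathcal{B}}:\mathfrak{B}(Y)\to\mathcal{P}(\mathcal{Z}(\M))$ denote the spectral measures of the representations $\mathcal{A}\hookrightarrow\M$ and $\mathcal{B}\hookrightarrow\mathcal{Z}(\M),$ together with their extensions to bounded Borel functions. The identity $\chi_B\circ\pi=\chi_{\pi^{-1}(B)}$ combined with $\mathcal{B}\subset\mathcal{A}$ yields the crucial compatibility $E_{\mathcal{B}}(B)=E_{\mathcal{A}}(\pi^{-1}(B))$ for every $B\in\mathfrak{B}(Y).$ For $f\in C(X)\cong\mathcal{A},$ set
$$\psi_k(f):=\int_Y (f\circ\pi_k)\,dE_{\mathcal{B}}\in\mathcal{Z}(\M).$$
Since $f\mapsto f\circ\pi_k$ is a unital $\ast$-homomorphism from $C(X)$ into the algebra of bounded Borel functions on $Y,$ and the Borel functional calculus of $\mathcal{B}\subset\mathcal{Z}(\M)$ is a unital $\ast$-homomorphism into $\mathcal{Z}(\M),$ their composition $\psi_k$ belongs to $\mathrm{Hom}(\mathcal{A},\mathcal{Z}(\M))$ as required.

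The three properties then drop out directly from the corresponding parts of Lemma~\ref{app1}. For (i), an element $a\in\mathcal{B}$ corresponds to $g\in C(Y)$ and hence to $g\circ\pi\in C(X);$ using $\pi\circ\pi_k=\mathrm{Id}_Y$ one gets $\psi_k(a)=\int_Y g\,dE_{\mathcal{B}}=a.$ For (iii), a projection $p\in G$ equals $E_{\mathcal{A}}(A)$ for some clopen $A\subset X,$ and $\psi_k(p)=E_{\mathcal{B}}(\pi_k^{-1}(A))=E_{\mathcal{A}}((\pi_k\circ\pi)^{-1}(A));$ the supremum over $k,$ combined with Lemma~\ref{app1}(ii) and $\sigma$-additivity of $E_{\mathcal{A}},$ gives
$$\bigvee_{k\in\mathbb{N}}\psi_k(p)=E_{\mathcal{A}}\Bigl(\bigcup_{k\in\mathbb{N}}(\pi_k\circ\pi)^{-1}(A)\Bigr)\geq E_{\mathcal{A}}(A)=p.$$
For (ii), if $\psi_k(a)=0$ for every $k,$ replacing $a$ by $a^\ast a$ reduces to $a=f\geq 0$ in $C(X);$ then $f\circ\pi_k=0$ $E_{\mathcal{B}}$-a.e., so $E_{\mathcal{A}}((\pi_k\circ\pi)^{-1}(\{f>0\}))=0$ for every $k,$ and Lemma~\ref{app1}(ii) forces $E_{\mathcal{A}}(\{f>0\})=0;$ since $\{f>0\}$ is open and $\mathcal{A}\hookrightarrow\M$ is faithful, this open set must be empty, i.e.\ $f=0.$

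The principal subtlety of the construction is the requirement that $\psi_k$ take values in the centre $\mathcal{Z}(\M)$ rather than in $\M$ itself; this is precisely what forces the detour through Borel sections of $\pi$ and the Borel functional calculus of $\mathcal{B},$ instead of the naive attempt using characters of $\mathcal{A}$ (which would fail to land in $\mathcal{Z}(\M)$ when $\M$ is not a factor). Ensuring simultaneously that $\psi_k$ fixes $\mathcal{B}$ pointwise, is separating on $\mathcal{A},$ and satisfies the pointwise majorisation on projections in $G$ is exactly what Lemma~\ref{app1} was designed to deliver on the spectrum side.
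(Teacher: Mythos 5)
Your proposal is correct and takes essentially the same route as the paper: Gelfand duality gives the surjection $\pi:X\to Y$, the Borel sections $\pi_k$ of Lemma \ref{app1} produce $\psi_k$ as the Borel functional calculus of $f\circ\pi_k$ against the spectral measure of $\mathcal{B}$, and (i)--(iii) are verified exactly as in the paper, which merely packages your spectral-measure computations into Lemma \ref{app2} and the isomorphisms $\Phi_X,\Phi_Y$. The one step you state tersely, the compatibility $E_{\mathcal{B}}=E_{\mathcal{A}}\circ\pi^{-1}$, requires the uniqueness of the spectral measure representing $C(Y)$ (the paper's appeal to the Riesz representation theorem), but this is routine.
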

\begin{proof} Set $X={\rm Spec}(\mathcal{A})$ and denote by $\zeta_X:\mathcal{A}\to C(X)$ the Gelfand-Naimark $\ast$-isomorphism. Set $Y={\rm Spec}(\mathcal{B})$ and denote by $\zeta_Y:\mathcal{B}\to C(Y)$ the Gelfand-Naimark $\ast$-isomorphism.

Clearly, ${\rm Span}_{\mathbb{Q}}(G\cup Z)$ (span over the field $\mathbb{Q}$) is countable and dense in $\mathcal{A}.$ Thus, $\mathcal{A}$ is separable. Since $\mathcal{A}$ is generated by its projections, by Lemma \ref{Gelfand lem}, $X$ is a totally disconnected compact metrizable space.

There exists a natural embedding $\iota:C(Y)\to C(X)$ defined by the formula $$\iota=\zeta_X\circ\zeta_Y^{-1}.$$
By Lemma \ref{embedding lem}, there exists a continuous surjection $\pi:X\to Y$ satisfying
\begin{equation}\label{pi eq}
\iota(g)=g\circ \pi,\quad g\in C(Y).
\end{equation}

By \cite[Theorem 12.22 (a)]{Rudin1991}, there exist unique spectral measures $$\nu_X:\mathfrak{B}(X)\to \mathcal{P}(W^*(G\cup Z)),\quad \nu_Y:\mathfrak{B}(Y)\to \mathcal{P}(W^\ast(Z)),$$
such that
$$a=\int_X (\zeta_X(a))(x) d\nu_X(x),\quad b=\int_Y (\zeta_Y(b))(y) d\nu_Y(y),\quad a\in\mathcal{A},\quad b\in \mathcal{B}.$$
We have $\nu_X(U)\neq0$ for every non-empty open $U\subset X$, and $\nu_Y(V)\neq0$ for every non-empty open $V\subset Y$ (see \cite[Theorem 12.22 (d)]{Rudin1991}).

Define the maps $\Phi_X:L_{\infty}(X,\nu_X)\to W^*(G\cup Z)$ and $\Phi_Y:L_{\infty}(Y,\nu_Y)\to W^*(Z)$  by setting
$$\Phi_X([f]_{\nu_X})=\int_X f(x)d\nu_X(x),\quad [f]_{\nu_X}\in L_{\infty}(X,\nu_X),$$
$$\Phi_Y([g]_{\nu_Y})=\int_Y g(y)d\nu_Y(y),\quad [g]_{\nu_Y}\in L_{\infty}(Y,\nu_Y).$$
By \cite[Theorem 12.21]{Rudin1991}, $\Phi_X:L_{\infty}(X,\nu_X)\to W^*(G\cup Z)$ and $\Phi_Y:L_{\infty}(Y,\nu_Y)\to W^*(Z)$ are surjective isometric $\ast$-isomorphisms.

From Lemma \ref{app2} \eqref{app2a}, the natural embedding $f\to[f]_{\nu_X}$ delivers an isometric $\ast$-monomorphism from $C(X)$ to $L_{\infty}(X,\nu_X).$ The natural embedding $g\to[g]_{\nu_Y}$ delivers an isometric $\ast$-monomorphism from $C(Y)$ to $L_{\infty}(Y,\nu_Y).$ For convenience, we treat these morphisms as the identity mappings. By construction,
$$\Phi_X|_{C(X)}=\zeta_X^{-1},\quad \Phi_Y|_{C(Y)}=\zeta_Y^{-1}.$$

For convenience, we let $\nu_X\circ\pi^{-1}$ denote the spectral measure 
$$B\mapsto \nu_X(\pi^{-1}(B)),\quad  B\in \mathfrak{B}(Y).$$
For each $g\in C(Y)$,
\begin{multline*}
\int_Y g(y)d(\nu_X\circ\pi^{-1})(y)=\int_Xg(\pi(x))d\nu_X(x)\\
=\zeta_X^{-1}(g\circ\pi)=\zeta_X^{-1}(\iota(g))=\zeta_Y^{-1}(g)=\Phi_Y(g)=\int_Y g(y)d\nu_Y(y).
\end{multline*}
By Riesz representation theorem (see e.g. \cite[Theorem 6.19]{Rudin1987}), 
\begin{equation}\label{uniqueness of spectral measure}
\nu_X\circ\pi^{-1}=\nu_Y.
\end{equation}
Thus,
\begin{equation}\label{simutaneously extension eq}
\Phi_Y([\chi_B]_{\nu_Y})=\nu_Y(B)=\nu_X(\pi^{-1}(B))=\Phi_X([\chi_{\pi^{-1}(B)}]_{\nu_X})=\Phi_X([\chi_B\circ\pi]_{\nu_X})
\end{equation}
for every Borel set $B\subset Y.$

Let $\{\pi_k:Y\to X\}_{k\in\mathbb{N}}$ be the family of Borel mappings given by Lemma \ref{app1}. For $k\in \mathbb{N}$, define a unital $\ast$-homomorphism $\psi_k:\mathcal{A}\to \mathcal{Z}(\M)$ by the formula
$$\psi_k=\Phi_Y\circ\iota_k\circ\zeta_X,$$
where $\iota_{k}:C(X)\to L_{\infty}(Y,\nu_X\circ\pi^{-1})$ is defined by setting $$\iota_k(f)=[f\circ\pi_k]_{\nu_X\circ\pi^{-1}},\quad f\in C(X).$$

For every $k\in \mathbb{N}$ and every $b\in \mathcal{B}$, we have
$$\zeta_X(b)=(\iota\circ\zeta_Y)(b)=g\circ\pi,\quad \text{ where }g=\zeta_Y(b).$$
Thus,
\begin{multline*}
\psi_k(b)=(\Phi_Y\circ\iota_k)(\zeta_X(b))=(\Phi_Y\circ\iota_k)(g\circ\pi)=\Phi_Y(\iota_k(g\circ\pi))\\
=\Phi_Y([g\circ\pi\circ\pi_k]_{\nu_X\circ\pi^{-1}})\stackrel{\text{Lemma }\ref{app2}\eqref{app2b}}{=}\Phi_Y([g]_{\nu_X\circ\pi^{-1}})=\zeta_Y^{-1}(g)=b.
\end{multline*}
This proves the assertion \eqref{point identity}.

If $a\in\mathcal{A}$ is non-zero, then $f:=\zeta_X(a^\ast a)\ne0.$ By Lemma \ref{app2} \eqref{app2d}, there exists $k\in \mathbb{N}$ such that $\iota_k(\zeta_X(a^\ast a))=[f\circ\pi_k]_{\nu_X\circ\pi^{-1}}$ is non-zero in $L_{\infty}(Y,\nu_X\circ\pi^{-1}).$ Therefore,
$$|\psi_k(a)|^2=\psi_k(a^\ast a)=\Phi_Y(\iota_k(\zeta_X(a^\ast a)))\ne0.$$
This proves the assertion \eqref{point faithful}.

We now prove the assertion \eqref{point projection}. For every $p\in G\subset \mathcal{A},$ $f:=\zeta_X(p)\in C(X).$ By definition,
$$\psi_k(p)=\Phi_Y(\iota_k(f))=\Phi_Y([f\circ \pi_k]_{\nu_X\circ\pi^{-1}})\stackrel{\eqref{simutaneously extension eq}}{=}\Phi_X([f\circ\pi_k\circ\pi]_{\nu_X}).$$
By \cite[Corollary 7.1.16]{Kadison1997_2}, every $*$-isomorphism on a von Neumann algebra is normal (i.e., order continuous). In particular, $\Phi_X$ is order continuous. Thus,
$$\vee_{k\in\mathbb{N}}\psi_k(p)=\vee_{k\in\mathbb{N}}\Phi_X([f\circ\pi_k\circ\pi]_{\nu_X})=\Phi_X(\vee_{k\in\mathbb{N}}[f\circ\pi_k\circ\pi]_{\nu_X}).$$
Clearly,
$$\vee_{k\in\mathbb{N}}[f\circ\pi_k\circ\pi]_{\nu_X}=[\vee_{k\in\mathbb{N}}(f\circ\pi_k\circ\pi)]_{\nu_X}.$$
Hence,
$$\vee_{k\in\mathbb{N}}\psi_k(p)=\Phi_X([\vee_{k\in\mathbb{N}}f\circ\pi_k\circ\pi]_{\nu_X}).$$
By Lemma \ref{app1} \eqref{functions point ineq}, we have
$$\sup_{k\in\mathbb{N}}f\circ\pi_k\circ\pi\geq f.$$
Thus,
$$\vee_{k\in\mathbb{N}}\psi_k(p)\geq\Phi_X([f]_{\nu_X})=\zeta_X^{-1}(f)=p.$$
This completes the proof of the assertion \eqref{point projection}. 
\end{proof}

The proof of the next lemma is routine and, therefore, omitted.
\begin{lemma}\label{pointwise approx lem} Let $\M$ be a von Neumann algebra. Let $n\in \mathbb{N}$ and $\alpha\in\M^n$ a commuting self-adjoint $n$-tuple. Suppose $\psi:C^*(\alpha)\to \M$ is a unital $\ast$-homomorphism. If $\{u_m\}_{m\in \mathbb{N}}$ is a sequence of unitaries in $\M$ such that 
$$\lim_{m\to\infty} \|\psi(\alpha(i))-u_m^{-1}\alpha(i)u_m\|_{\M}=0$$
for every $1\le i\le n$, then
$$\lim_{m\to\infty}\|\psi(a)-u_m^{-1}au_m\|_{\M}=0$$
for all $a\in C^*(\alpha)$. Namely, $\psi\sim_{\M}{\rm Id}_{C^{\ast}(\alpha)}.$
\end{lemma}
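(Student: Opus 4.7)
The plan is a standard density argument: show that the set of $a \in C^*(\alpha)$ on which the desired convergence holds is a closed unital $\ast$-subalgebra, then observe that it contains the generators $\alpha(1),\ldots,\alpha(n)$ and hence equals $C^*(\alpha)$.

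First, define
$$S=\{a\in C^*(\alpha):\lim_{m\to\infty}\|\psi(a)-u_m^{-1}au_m\|_{\M}=0\}.$$
By hypothesis $\alpha(i)\in S$ for $1\le i\le n$, and $\mathbf{1}\in S$ since $\psi$ is unital and $u_m^{-1}\mathbf{1}u_m=\mathbf{1}$. Closure of $S$ under linear combinations and under the adjoint (using $\psi(a^*)=\psi(a)^*$ and $(u_m^{-1}au_m)^*=u_m^{-1}a^*u_m$) is immediate.

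Next, I would verify that $S$ is closed under multiplication. For $a,b\in S$, write
$$\psi(ab)-u_m^{-1}(ab)u_m=\bigl(\psi(a)-u_m^{-1}au_m\bigr)\psi(b)+u_m^{-1}au_m\bigl(\psi(b)-u_m^{-1}bu_m\bigr).$$
Since $\|\psi(b)\|_{\M}\le\|b\|$ and $\|u_m^{-1}au_m\|_{\M}=\|a\|$ (as $u_m$ is unitary), the triangle inequality yields
$$\|\psi(ab)-u_m^{-1}(ab)u_m\|_{\M}\le\|b\|\,\|\psi(a)-u_m^{-1}au_m\|_{\M}+\|a\|\,\|\psi(b)-u_m^{-1}bu_m\|_{\M},$$
which tends to $0$, so $ab\in S$.

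Finally, $S$ is closed in norm: if $a_k\to a$ in $C^*(\alpha)$ with $a_k\in S$, then using $\|\psi\|\le 1$ and $\|u_m^{-1}\cdot u_m\|=1$,
$$\|\psi(a)-u_m^{-1}au_m\|_{\M}\le 2\|a-a_k\|+\|\psi(a_k)-u_m^{-1}a_ku_m\|_{\M},$$
so $\limsup_{m\to\infty}\|\psi(a)-u_m^{-1}au_m\|_{\M}\le 2\|a-a_k\|$, and letting $k\to\infty$ gives $a\in S$. Thus $S$ is a closed unital $\ast$-subalgebra of $C^*(\alpha)$ containing $\{\alpha(1),\ldots,\alpha(n)\}$, hence $S=C^*(\alpha)$. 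There is no real obstacle here; the only mild point is keeping track of which factor the perturbation sits on in the multiplicative step.
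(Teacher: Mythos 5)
Your proof is correct: the set $S$ on which the convergence holds is verified to be a closed unital $\ast$-subalgebra containing $\alpha(1),\dots,\alpha(n)$, and the algebraic identity you use for the multiplicative step, together with contractivity of $\psi$ and unitary invariance of the norm, is exactly the routine argument the paper has in mind when it omits the proof of this lemma. Nothing is missing, and the approach coincides with the standard one the authors clearly intend.
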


Now we can construct a separable commutative $C^\ast$-algebra $\mathcal{A}\supset C^\ast(\alpha)$ and a $\ast$-homomorphism $\psi:\mathcal{A}\to\M$ such that $\psi|_{C^\ast(\alpha)}\sim_{\M} {\rm Id}_{C^{\ast}(\alpha)}$.

\begin{construction}\label{section2 main construction} Let $\M$ be a properly infinite $\sigma$-finite von Neumann algebra. Let $\alpha\in\M^n$ be a commuting self-adjoint $n$-tuple. 

For every $m\in \mathbb{Z}_+$, let ${\rm At}_m$ be the collection of all cubes
$$\Big[\frac{k_1}{2^m},\frac{k_1+1}{2^m}\Big)\times\cdots\times\Big[\frac{k_n}{2^m},\frac{k_n+1}{2^m}\Big),\quad (k_1,\ldots,k_n)\in \mathbb{Z}^n.$$
Set
$$G=\cup_{m\in \mathbb{Z}_+}\{e^{\alpha}(U):U\in {\rm At}_m\},\quad Z=\{c(p):p\in G\},\quad \mathcal{A}=C^\ast(G\cup Z).$$

Let the sequence $\{\psi_k\}_{k\in \mathbb{N}}\subset {\rm Hom}(\mathcal{A},\mathcal{Z}(\M))$ be given by Proposition \ref{separating family prop}. Let $\{p_k\}_{k\in \mathbb{N}}$ be a sequence of pairwise orthogonal projections in $\M$ such that $p_k\sim \mathbf{1}$ for each $k\in \mathbb{N}$ and such that $\sum_{k\in\mathbb{N}}p_k=\mathbf{1}.$ Define the $\ast$-homomorphism  $\psi:\mathcal{A}\to\mathcal{M}$ by the formula
$$\psi(a)=\sum_{k\in \mathbb{N}}\psi_k(a)p_k,\quad a\in \mathcal{A},$$
where the series converges in the strong operator topology.
\end{construction}

We now prove that the $\ast$-homomorphism $\psi:\mathcal{A}\to\M$ in Construction \ref{section2 main construction} is faithful.

\begin{lemma}\label{faithful lemma} Let $\M$ be a properly infinite $\sigma$-finite von Neumann algebra. Let $\alpha\in\M^n$ be a commuting self-adjoint $n$-tuple. If $\mathcal{A}$ and $\psi\in {\rm Hom}(\mathcal{A},\M)$ are given by Construction \ref{section2 main construction}, then $\psi$ is faithful.
\end{lemma}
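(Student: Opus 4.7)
The plan is to reduce the faithfulness of $\psi$ to the separating property \eqref{point faithful} of the family $\{\psi_k\}_{k\in\mathbb{N}}$ supplied by Proposition \ref{separating family prop}, using the fact that each $p_k$ has full central support and that the $\psi_k(a)$ land in $\mathcal{Z}(\M).$

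Suppose $a\in\mathcal{A}$ satisfies $\psi(a)=0.$ Applying $\psi$ to $a^{\ast}a$ and using that each $\psi_k$ is a $\ast$-homomorphism into $\mathcal{Z}(\M),$ we obtain
$$0=\psi(a^{\ast}a)=\sum_{k\in\mathbb{N}}\psi_k(a^{\ast}a)p_k=\sum_{k\in\mathbb{N}}|\psi_k(a)|^2 p_k,$$
where the series converges in the strong operator topology. Multiplying on the right by $p_j$ (which is continuous in the SOT on bounded sets) and using that the $\{p_k\}_{k\in\mathbb{N}}$ are pairwise orthogonal and that $|\psi_j(a)|^2\in\mathcal{Z}(\M)$ commutes with $p_j,$ we get
$$|\psi_j(a)|^2 p_j=0,\quad j\in\mathbb{N}.$$

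Next I would show that $|\psi_j(a)|^2=0.$ Since $p_j\sim\mathbf{1}$ in $\M,$ equivalent projections share the same central support, so $c(p_j)=c(\mathbf{1})=\mathbf{1}.$ Let $z_j=|\psi_j(a)|^2\in\mathcal{Z}(\M)_+$ and denote by $q_j\in\mathcal{P}(\mathcal{Z}(\M))$ the support projection of $z_j.$ Then $z_j q_j=z_j$ and $z_j$ is invertible in the reduced algebra $q_j\M.$ The identity $z_j p_j=0$ therefore forces $q_j p_j=0,$ that is, $p_j\le \mathbf{1}-q_j.$ Since $\mathbf{1}-q_j$ is central, the definition of central support gives $\mathbf{1}=c(p_j)\le\mathbf{1}-q_j,$ hence $q_j=0$ and $z_j=0.$ Thus $\psi_j(a)=0$ for every $j\in\mathbb{N}.$

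Finally, by Proposition \ref{separating family prop} \eqref{point faithful} the family $\{\psi_k\}_{k\in\mathbb{N}}$ is separating on $\mathcal{A},$ so $a=0.$ Hence $\psi$ is injective, i.e.\ faithful. The only mildly delicate points in this plan are the SOT-continuity step used to extract $|\psi_k(a)|^2p_k=0$ from the series identity, and the central-support computation that upgrades $z_jp_j=0$ to $z_j=0$; both become clean once one exploits that the $\psi_k(a)$ are central and that $c(p_k)=\mathbf{1}.$
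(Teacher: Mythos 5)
Your argument is essentially the paper's proof run in contrapositive form: it rests on exactly the same ingredients --- the centrality of the $\psi_k(a)$, the fact that $p_k\sim\mathbf{1}$ forces $c(p_k)=\mathbf{1}$, and the separating property of Proposition \ref{separating family prop} \eqref{point faithful} --- only packaged through $a^{\ast}a$ and block projections instead of directly exhibiting $\psi_k(a)p_k\ne0$ for a nonzero $a$. One small repair: the claim that $z_j$ is invertible in the corner $q_j\M q_j$ is false in general (the spectrum of $z_j$ may accumulate at $0$), but the conclusion you need survives without it, since $z_jp_j=0$ with $z_j\ge0$ gives $p_j(H)\subseteq\ker z_j=(\mathbf{1}-q_j)(H)$, i.e.\ $q_jp_j=0$ (equivalently, $p_jz_jp_j=0$ forces $z_j^{1/2}p_j=0$ and hence $q_jp_j=0$). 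With that one-line fix the proof is complete and matches the paper's.
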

\begin{proof} Let the sequences $\{\psi_k\}_{k\in \mathbb{N}}\subset {\rm Hom}(\mathcal{A},\mathcal{Z}(\M))$, $\{p_k\}_{k\in \mathbb{N}}\subset \mathcal{P}(\M)$ be as in Construction \ref{section2 main construction}. If $a\in \mathcal{A}$ is non-zero, then by Proposition \ref{separating family prop} \eqref{point faithful}, there exists $k\in \mathbb{N}$ such that $\psi_k(a)\ne0.$ Then the right support projection $z:=\mathfrak{r}(\psi_k(a))\ne0.$ Since $\psi_k(a)\in \mathcal{Z}(\mathcal{M})$, it follows that $z\in \mathcal{Z}(\mathcal{M}).$ Since $p_k\sim\mathbf{1}$ by Construction \ref{section2 main construction}, it follows that $c(p_k)=\mathbf{1}$ and, therefore,
$$c(zp_k)=zc(p_k)=z\ne 0.$$ 
Thus, $zp_k\ne0$ or equivalently, $\psi_k(a)p_k\ne0.$ Hence, $\psi(a)\ne0.$
\end{proof}

The following lemma plays an important role in the proof of the assertion that $\psi|_{C^\ast(\alpha)}\sim_{\M}{\rm Id}_{C^{\ast}(\alpha)}$, where $\psi:\mathcal{A}\to\M$ is the $\ast$-homomorphism in Construction \ref{section2 main construction}.

\begin{lemma}\label{psip sim p} Let $\M$ be a properly infinite $\sigma$-finite von Neumann algebra. Suppose $\alpha\in\M^n$ is a commuting self-adjoint $n$-tuple such that 
\begin{equation}\label{new assumption}
WZ^\ast(\alpha)\cap \mathcal{P}_f(\M)=\{0\}.
\end{equation}
Let $G$, $Z$, $\mathcal{A}$ and $\psi$ be as in Construction \ref{section2 main construction}. We have
$$\psi(p)\sim p,\quad p\in G.$$
\end{lemma}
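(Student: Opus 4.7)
The plan is to show that $\psi(p)$ and $p$ are both properly infinite projections with the same central support in the $\sigma$-finite von Neumann algebra $\M$, and then invoke \cite[Corollary 6.3.5]{Kadison1997_2} (cited already in Lemma \ref{equivalent projections lem}) to conclude $\psi(p)\sim p$. The case $p=0$ is trivial, so assume $p\neq 0$. Writing $z_k=\psi_k(p)\in\mathcal{P}(\mathcal{Z}(\M))$, one observes that $\psi(p)=\sum_{k\in\mathbb{N}}z_kp_k$ is genuinely a projection, since the $p_k$ are pairwise orthogonal and each $z_k$ is central.

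The first step is to identify the central supports. Because $c(p)\in Z\subset\mathcal{B}$, Proposition \ref{separating family prop}\eqref{point identity} gives $\psi_k(c(p))=c(p)$, so applying $\psi_k$ to the inequality $p\le c(p)$ yields $z_k\le c(p)$ for every $k$. Conversely, Proposition \ref{separating family prop}\eqref{point projection} gives $p\le\vee_{k}z_k$, and since $\vee_{k}z_k$ is central this forces $c(p)\le\vee_{k}z_k$. Using that $p_k\sim\mathbf{1}$ implies $c(p_k)=\mathbf{1}$, and the centrality of $z_k$, one has $c(z_kp_k)=z_k c(p_k)=z_k$, so
$$c(\psi(p))=\bigvee_{k\in\mathbb{N}}c(z_kp_k)=\bigvee_{k\in\mathbb{N}}z_k=c(p).$$

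For the proper infiniteness, on the $p$ side one has $p\in G\subset W^{\ast}(\alpha)\subset WZ^{\ast}(\alpha)$, so Lemma \ref{properly infinite lemma 1} together with hypothesis \eqref{new assumption} makes $p$ properly infinite. On the $\psi(p)$ side, $\M$ is properly infinite, so $\mathbf{1}$ (and hence each $p_k\sim\mathbf{1}$) is properly infinite. Fix a central projection $f\in\M$ with $f\psi(p)\neq 0$. Then $fz_{k_0}p_{k_0}\neq 0$ for some $k_0$, and since $fz_{k_0}$ is central and $p_{k_0}$ is properly infinite, $(fz_{k_0})p_{k_0}$ is infinite; as it sits under $f\psi(p)$, the latter is infinite too. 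Thus $\psi(p)$ is properly infinite.

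With both projections properly infinite and having equal central support $c(p)$, the cited corollary from Kadison--Ringrose gives $\psi(p)\sim p$ in $\M$. The only real content is the two auxiliary claims (equal central support and proper infiniteness of $\psi(p)$); the hard part is keeping the role of the centrality of $z_k$, the properties of $\psi_k$ from Proposition \ref{separating family prop}, and the standing hypothesis \eqref{new assumption} straight — once those are unpacked, the rest is a direct application of standard comparison theory.
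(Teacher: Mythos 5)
Your proof is correct and follows essentially the same route as the paper's: establish $c(\psi(p))=c(p)$ via Proposition \ref{separating family prop}\eqref{point identity} and \eqref{point projection}, show both projections are properly infinite, and invoke \cite[Corollary 6.3.5]{Kadison1997_2}. The only cosmetic difference is that you verify proper infiniteness of $\psi(p)$ by directly unwinding the definition (taking an arbitrary central $f$ with $f\psi(p)\neq0$ and exhibiting an infinite subprojection $fz_{k_0}p_{k_0}$), whereas the paper argues it via the existence of one nonzero summand $\psi_k(p)p_k$ together with Lemma \ref{faithful lemma}; both amount to the same observation.
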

\begin{proof}Let the sequences $\{\psi_k\}_{k\in \mathbb{N}}\subset {\rm Hom}(\mathcal{A},\mathcal{Z}(\M)), \{p_k\}_{k\in \mathbb{N}}\subset \mathcal{P}(\M)$ be as in Construction \ref{section2 main construction}.

Let $0\ne p\in G.$  For every $k\in\mathbb{N},$ $\psi_k(p)\in\mathcal{P}(\mathcal{Z}(\mathcal{M}))$ and $c(p_k)=\mathbf{1}$ (since $p_k\sim {\bf 1}$ according to the Construction \ref{section2 main construction}). Thus,
$$c(\psi_k(p)p_k)=\psi_k(p)\cdot c(p_k)=\psi_k(p).$$
Hence,
$$c(\psi(p))=\vee_{k\in \mathbb{N}}c(\psi_k(p)p_k)=\vee_{k\in \mathbb{N}}\psi_k(p).$$
By Proposition \ref{separating family prop} \eqref{point projection}, $p\le \vee_{k\in \mathbb{N}}\psi_k(p).$ Hence, $c(\psi(p))\geq p$ and, therefore, $c(\psi(p))\geq c(p).$

By construction, $c(p)\in Z.$ Thus,
$$\psi(p)\leq\psi(c(p))=\sum_{k\geq1}\psi_k(c(p))p_k\stackrel{\text{Proposition }\ref{separating family prop} \eqref{point identity}}{=}\sum_{k\geq1}c(p)p_k=c(p).$$ 
Therefore, $c(\psi(p))=c(p).$

Recall that 
$$0\ne p\in G\subset W^\ast(\alpha)\subset WZ^\ast(\alpha).$$
By Lemma \ref{properly infinite lemma 1}, $p$ is properly infinite (this is the only place in this section where we are using the assumption $WZ^\ast(\alpha)\cap \mathcal{P}_f(\M)=\{0\}$). Since $p_k\sim\mathbf{1},$ it follows that $p_k$ is properly infinite. Since $\psi_k(p)$ is a central projection, it follows that $\psi_k(p)p_k$ is either properly infinite or $0.$ If $\psi_k(p)p_k=0$ for every $k\in\mathbb{N},$ then $\psi(p)=0,$ which is impossible by Lemma \ref{faithful lemma}. Hence, there exists $k\in\mathbb{N}$ such that $\psi_k(p)p_k$ is properly infinite. Therefore, $\psi(p)$ is properly infinite.

Since $p$ and $\psi(p)$ are both properly infinite and since $c(\psi(p))=c(p),$ it follows from \cite[Corollary 6.3.5]{Kadison1997_2} that $\psi(p) \sim p.$
\end{proof}

\begin{lemma}\label{finite spectrum lemma} Let $\M$ be a von Neumann algebra. Let $\{p_k\}_{k=1}^l$ be a family of pairwise orthogonal projections such that $\sum_{k=1}^l p_k=\mathbf{1}$. For every unital $\ast$-homomorphism $\psi:C^{\ast}(\{p_k\}_{k=1}^l)\to\M$ satisfying $\psi(p_k)\sim p_k$ for every $1\le k\le l,$ there exists a unitary $w\in\M$ such that
$$w^{-1}p_kw=\psi(p_k),\quad 1\le k\le l.$$
\end{lemma}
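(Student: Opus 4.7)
The plan is to construct $w$ directly as a sum of partial isometries witnessing the equivalences $\psi(p_k)\sim p_k$, where the orientation of the partial isometry is chosen so that $w$ intertwines the two families of projections in the correct direction.

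First I observe that since $\psi$ is a unital $\ast$-homomorphism, applying $\psi$ to $\sum_{k=1}^{l}p_k=\mathbf{1}$ and to $p_jp_k=0$ (for $j\neq k$) shows that $\{\psi(p_k)\}_{k=1}^{l}$ is also a family of pairwise orthogonal projections summing to $\mathbf{1}$. Next, using the hypothesis $\psi(p_k)\sim p_k$ and the definition of $\sim$ recalled in Section~\ref{pre section}, I choose for each $1\le k\le l$ a partial isometry $v_k\in\M$ with
\[
v_k^{\ast}v_k=\psi(p_k),\qquad v_kv_k^{\ast}=p_k.
\]
The orientation here is important: I take the initial projection to be $\psi(p_k)$ and the final projection to be $p_k$, so that $v_k=p_kv_k=v_k\psi(p_k)$.

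I then set $w=\sum_{k=1}^{l}v_k\in\M$ and claim that $w$ is the desired unitary. For $j\neq k$, using $v_j=p_jv_j$ and $v_k=v_k\psi(p_k)$ together with the orthogonality of $\{p_k\}$ and of $\{\psi(p_k)\}$, one gets $v_jv_k^{\ast}=p_jv_jv_k^{\ast}p_k=0$ (since $v_jv_k^{\ast}$ has final projection dominated by $p_j$ and initial projection dominated by $p_k$), and similarly $v_j^{\ast}v_k=\psi(p_j)v_j^{\ast}v_k\psi(p_k)=0$. Therefore
\[
w^{\ast}w=\sum_{k=1}^{l}v_k^{\ast}v_k=\sum_{k=1}^{l}\psi(p_k)=\psi(\mathbf{1})=\mathbf{1},\qquad ww^{\ast}=\sum_{k=1}^{l}v_kv_k^{\ast}=\sum_{k=1}^{l}p_k=\mathbf{1},
\]
so $w$ is unitary.

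Finally, for the intertwining relation, fix $1\le k\le l$. From $v_j=p_jv_j$ one computes $p_kw=\sum_{j}p_kv_j=\sum_{j}p_kp_jv_j=v_k$, and from $v_j=v_j\psi(p_j)$ one computes $w\psi(p_k)=\sum_{j}v_j\psi(p_j)\psi(p_k)=v_k$. Hence $p_kw=w\psi(p_k)$, i.e.\ $w^{-1}p_kw=\psi(p_k)$, as required. There is no real obstacle in this proof; the only subtle point is the choice of orientation of the $v_k$, which must be made so that $w$ sends the range of $\psi(p_k)$ onto the range of $p_k$ (and not the reverse), giving exactly the intertwining formula in the statement.
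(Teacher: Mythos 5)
Your proof is correct and follows exactly the paper's argument: choose partial isometries $v_k$ with $v_k^{\ast}v_k=\psi(p_k)$ and $v_kv_k^{\ast}=p_k$, set $w=\sum_{k=1}^{l}v_k$, and verify unitarity and the intertwining relation (the paper simply declares these verifications ``clear''). Your added details, including the orientation of the $v_k$ and the vanishing of the cross terms, are accurate and fill in precisely what the paper omits.
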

\begin{proof} By assumption, for each $1\le k\le l,$ there exists a partial isometry $w_k\in\M$ such that
$$w_k^{\ast}w_k=\psi(p_k),\quad w_kw_k^{\ast}=p_k.$$
Set $w=\sum_{k=1}^l w_k.$ It is clear that $w$ is a unitary and that
$$w^{-1}p_kw=\psi(p_k),\quad 1\le k\le l.$$
The assertion follows immediately.
\end{proof}

Now, we prove that $\psi|_{C^\ast(\alpha)}\sim_{\M}{\rm Id}_{C^{\ast}(\alpha)},$ where the $\ast$-homomorphism $\psi$ is given by the Construction \ref{section2 main construction}.

\begin{theorem}\label{approximation by inner hom} Let $\M$ be a properly infinite $\sigma$-finite von Neumann algebra. Let $\alpha\in\M^n$ be a commuting self-adjoint $n$-tuple satisfying $$WZ^\ast(\alpha)\cap \mathcal{P}_f(\M)=\{0\}.$$
If $\mathcal{A}$ and $\psi$ are as in Construction \ref{section2 main construction}, then
\begin{enumerate}[\rm (i)]
\item\label{psi point 2} $\psi$ is faithful;
\item\label{psi point 3} $\psi|_{C^*(\alpha)}\sim_{\M}{\rm Id}_{C^{\ast}(\alpha)}$.
\end{enumerate}
\end{theorem}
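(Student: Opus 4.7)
The first assertion \eqref{psi point 2} is exactly the content of Lemma \ref{faithful lemma}, so only the second assertion \eqref{psi point 3} requires attention. By Lemma \ref{pointwise approx lem} it is enough to exhibit a sequence of unitaries $\{u_m\}_{m\in\mathbb{N}}\subset\M$ such that
\[
\|\psi(\alpha(i))-u_m^{-1}\alpha(i)u_m\|_{\M}\longrightarrow 0,\qquad 1\le i\le n.
\]
The plan is to discretise $\alpha$ against the dyadic spectral partition built into Construction \ref{section2 main construction}, and then to simultaneously conjugate the resulting simple approximants via Lemmas \ref{psip sim p} and \ref{finite spectrum lemma}.

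Fix $m\in\mathbb{Z}_+$. Since each $\alpha(i)\in\M$ is bounded, only finitely many cubes $U\in{\rm At}_m$ meet the joint spectrum of $\alpha$; enumerate those with $e^{\alpha}(U)\neq 0$ as $\{U_k^{(m)}\}_{k=1}^{l_m}$ and set $p_k^{(m)}=e^{\alpha}(U_k^{(m)})\in G$. Then $\{p_k^{(m)}\}_{k=1}^{l_m}$ is a finite family of pairwise orthogonal projections with sum $\mathbf{1}$. Picking a sample point $(\lambda_{k,1}^{(m)},\ldots,\lambda_{k,n}^{(m)})\in U_k^{(m)}$ for each $k$, the Borel functional calculus for the commuting tuple $\alpha$ gives
\[
\Big\|\alpha(i)-\sum_{k=1}^{l_m}\lambda_{k,i}^{(m)}p_k^{(m)}\Big\|_{\M}\le 2^{-m},\qquad 1\le i\le n,
\]
because on the range of $p_k^{(m)}$ the spectrum of $\alpha(i)$ is contained in an interval of length at most $2^{-m}$. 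Since $\psi$ is a $\ast$-homomorphism (hence norm-contractive), applying $\psi$ to this inequality yields the parallel estimate
\[
\Big\|\psi(\alpha(i))-\sum_{k=1}^{l_m}\lambda_{k,i}^{(m)}\psi(p_k^{(m)})\Big\|_{\M}\le 2^{-m}.
\]

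By Lemma \ref{psip sim p} we have $\psi(p_k^{(m)})\sim p_k^{(m)}$ for every $k$, so Lemma \ref{finite spectrum lemma}, applied to the finite family $\{p_k^{(m)}\}_{k=1}^{l_m}$ and to the restriction of $\psi$ to $C^{\ast}(\{p_k^{(m)}\}_{k=1}^{l_m})$, furnishes a unitary $u_m\in\M$ with $u_m^{-1}p_k^{(m)}u_m=\psi(p_k^{(m)})$ for all $1\le k\le l_m$. Consequently
\[
u_m^{-1}\Big(\sum_{k=1}^{l_m}\lambda_{k,i}^{(m)}p_k^{(m)}\Big)u_m=\sum_{k=1}^{l_m}\lambda_{k,i}^{(m)}\psi(p_k^{(m)}),
\]
and the triangle inequality combined with the two $2^{-m}$-estimates gives $\|\psi(\alpha(i))-u_m^{-1}\alpha(i)u_m\|_{\M}\le 2\cdot 2^{-m}\to 0$ for each $1\le i\le n$, as required.

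The real work has already been done in Lemmas \ref{psip sim p} and \ref{finite spectrum lemma}; the only conceptual point left is that the \emph{same} unitary $u_m$ must simultaneously implement the equivalences $p_k^{(m)}\sim\psi(p_k^{(m)})$ for all $k$ at level $m$, and this is precisely what Lemma \ref{finite spectrum lemma} provides for a finite family of equivalent projections summing to $\mathbf{1}$. I do not foresee a genuinely new obstacle, which is the payoff for having packaged the delicate centre-valued construction of $\psi$ into the earlier lemmas.
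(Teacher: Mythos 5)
Your proposal is correct and follows essentially the same route as the paper: the paper likewise discretises $\alpha$ against the dyadic spectral projections $G_m$ (using cube centres rather than arbitrary sample points), obtains $\psi(p)\sim p$ from Lemma \ref{psip sim p}, produces the unitary $u_m$ via Lemma \ref{finite spectrum lemma}, and concludes with the triangle inequality and Lemma \ref{pointwise approx lem}. The only difference is cosmetic (a factor $2^{-m-1}$ versus $2^{-m}$ in the approximation step).
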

\begin{proof} The assertion \eqref{psi point 2} has been proved in Lemma \ref{faithful lemma}.

Let $\{{\rm At}_m\}_{m\in \mathbb{N}}$, $G$ and $Z$ be as in Construction \ref{section2 main construction}. For $m\in \mathbb{N}$, set
$$G_m=\{e^{\alpha}(U):U\in {\rm At}_m\},\quad Z_m=\{c(p):p\in G_m\},\quad \mathcal{A}_m=C^\ast(G_m\cup Z_m).$$
Since $G=\cup_{m\in \mathbb{N}}G_m$, $Z=\cup_{m\in \mathbb{N}}Z_m$ and $\mathcal{A}=C^\ast(G\cup Z)$ by construction, it follows that 
$$\mathcal{A}=\overline{\cup_{m=1}^\infty\mathcal{A}_m}^{\|\cdot\|_{\M}}.$$

For $m\in \mathbb{N}$, $\mathcal{A}_m\subset\mathcal{A}$ and, hence, $\psi:\mathcal{A}_m\to\mathcal{M}$ is a unital $\ast$-homomorphism. By Lemma \ref{psip sim p}, we have that $\psi(p)\sim p$ for every $p\in G_m.$ By Lemma \ref{finite spectrum lemma}, there exists a unitary element $u_m\in\mathcal{M}$ such that 
\begin{equation}\label{psi preserves equivalence}
\psi(p)=u_m^{-1}pu_m,\quad p\in G_m.
\end{equation}
For every $U\in {\rm At_m}$, let $c_U\in U$ be the centre of the cube $U$. We write $c_U=(c_U(i))_{i=1}^n$ where $c_U(i)\in \mathbb{R}$ for $1\le i\le n$. Set
$$\beta(i)=\sum_{U\in {\rm At}_m}c_U(i)e^{\alpha}(U),\quad 1\le i\le n.$$
We have that $\beta\in ({\rm Span}(G_m))^n$ and
$$\|\alpha(i)-\beta(i)\|_{\M}\le 2^{-m-1},\quad 1\le i\le n.$$ 
By \eqref{psi preserves equivalence}, $u_m^{-1}\beta(i) u_m=\psi(\beta(i))$ for every $1\le i\le n.$ 
We have
\begin{multline*}
\psi(\alpha(i))-u_m^{-1}\alpha(i)u_m=\\
=\psi(\alpha(i)-\beta(i))+\big(\psi(\beta(i))-u_m^{-1}\beta(i)u_m\big)+u_m^{-1}(\beta(i)-\alpha(i))u_m\\
=\psi(\alpha(i)-\beta(i))+u_m^{-1}(\beta(i)-\alpha(i))u_m.
\end{multline*}
Thus, for every $1\le i\le n,$
$$\|\psi(\alpha(i))-u_m^{-1}\alpha(i)u_m\|_{\M}\le 2\|\alpha(i)-\beta(i)\|_{\M}\leq 2^{-m}.$$
The assertion \eqref{psi point 3} follows now from Lemma \ref{pointwise approx lem}.
\end{proof}

\begin{remark} Let $\psi$ be defined as in Theorem \ref{approximation by inner hom}. It is worth mentioning that, if the Hilbert space $H$ is separable, then $\psi(p)\sim p$ for every projection $p\in C^\ast(\alpha)$. Indeed, by Theorem \ref{approximation by inner hom}, $\psi|_{C^\ast(\alpha)}\sim_{\M}{\rm Id}_{C^{\ast}(\alpha)}$. Then from \cite[Theorem 3]{Ding2005},  $\mathfrak{l}(\psi(a))\sim \mathfrak{l}(a)$ for any $a\in C^\ast(\alpha)$. In particular, $\psi(p)\sim p$ for every projection $p\in C^\ast(\alpha)$. This should be compared with Lemma \ref{psip sim p}.
\end{remark}

\section{Technical result}\label{technical section}

The following theorem is the main result of this section. It can be regarded as an analogue of \cite[Lemma 1.2]{V1976} (taking $\pi_r={\rm Id}_{C^{\ast}(\alpha)}$ for each $r\in \mathbb{N}$). This theorem is a key ingredient in the proof of Theorem \ref{key thm}. 

\begin{theorem}\label{isometries commutator vanishing thm} Let $\M$ be a properly infinite $\sigma$-finite von Neumann algebra with a faithful normal semifinite trace $\tau$. Let $\alpha\in\M^n$ be a commuting self-adjoint $n$-tuple such that
\begin{equation}\label{technical cond}
WZ^*(\alpha)\cap \mathcal{P}_f(\M)=\{0\}.
\end{equation}
There exists a sequence of isometries $\{v_j\}_{j\geq0}\subset\M$ such that
\begin{enumerate}[\rm (i)]
\item\label{4.1.1} $v_{j_1}^{\ast}v_{j_2}=\delta_{j_1,j_2}\mathbf{1},\quad j_1,j_2\geq0;$
\item\label{4.1.2} $[v_j,\alpha(i)]\in \mathcal{K}(\M,\tau)$ for $j\ge0,$ $1\le i\le n$;
\item\label{4.1.3} $\|[v_j,\alpha]\|_{\M}\to0$ as $j\to\infty.$
\end{enumerate}
\end{theorem}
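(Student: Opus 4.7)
The plan is an inductive construction that uses the dyadic spectral scaffolding of $\alpha$ together with the fact that, by hypothesis \eqref{technical cond} and Lemma \ref{properly infinite lemma 1}, every nonzero spectral projection of $\alpha$ is properly infinite. For each $m\ge0$, partition $\mathbb{R}^n$ into the dyadic cubes $\{U_i^m\}_{i\in I_m}$ of side $2^{-m}$; since $\alpha$ is bounded, one may take $I_m$ finite. Set $e_i^m=e^\alpha(U_i^m)$, and let $\lambda_{i,k}^m$ be the $k$-th coordinate of the centre of $U_i^m$. Then $\sum_i e_i^m=\mathbf{1}$, $\|\alpha(k)e_i^m-\lambda_{i,k}^m e_i^m\|_{\M}\le \sqrt{n}\,2^{-m-1}$, and every nonzero $e_i^m$ is properly infinite (it lies in $W^*(\alpha)\subset WZ^*(\alpha)$).

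I then build $v_j$ by induction on $j$. Choose $m_j$ so that $\sqrt{n}\,2^{-m_j}\le 2^{-j}$. At stage $j$, let $F_{j-1}=\sum_{k<j}v_kv_k^*$. I pick, for every nonzero $e_i^{m_j}$, a subprojection $g_{i,j}\le e_i^{m_j}$ with $g_{i,j}\sim e_i^{m_j}$ and $g_{i,j}\perp F_{j-1}$, and set $v_j=\sum_{i\in I_{m_j}}v_{i,j}$ where $v_{i,j}$ is a partial isometry in $\M$ with $v_{i,j}^*v_{i,j}=e_i^{m_j}$ and $v_{i,j}v_{i,j}^*=g_{i,j}$. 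To ensure such $g_{i,j}$ exists, I maintain the invariant that $e_i^m\wedge(\mathbf{1}-F_{j-1})\sim e_i^m$ for every $m$ and every nonzero $e_i^m$; at each step this is preserved by choosing $g_{i,j}$ to be one ``half'' of the properly infinite projection $e_i^{m_j}\wedge(\mathbf{1}-F_{j-1})$, using the standard halving property of properly infinite projections in a $\sigma$-finite algebra (the verification at other levels $m\ne m_j$ uses Kaplansky's formula together with the hierarchical structure of dyadic cubes and is the principal bookkeeping). Then $v_j^*v_j=\sum_ie_i^{m_j}=\mathbf{1}$ and the orthogonality of $\{g_{i,j}\}$ across $j$ gives $v_{j_1}^*v_{j_2}=\delta_{j_1j_2}\mathbf{1}$, establishing \eqref{4.1.1}. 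Since $v_{i,j}\in e_i^{m_j}\M e_i^{m_j}$ and $e_i^{m_j}$ commutes with $\alpha$, one computes
$$[v_{i,j},\alpha(k)]=v_{i,j}\bigl((\alpha(k)-\lambda_{i,k}^{m_j})e_i^{m_j}\bigr)-\bigl((\alpha(k)-\lambda_{i,k}^{m_j})e_i^{m_j}\bigr)v_{i,j},$$
whose $\M$-norm is at most $2\sqrt{n}\,2^{-m_j-1}$; as the summands lie in pairwise orthogonal corners $e_i^{m_j}\M e_i^{m_j}$, we get $\|[v_j,\alpha(k)]\|_{\M}\le 2^{-j}\to 0$, which is \eqref{4.1.3}.

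The hard part that I expect to be the main obstacle is property \eqref{4.1.2}. The naive construction above yields $[v_j,\alpha(k)]$ supported in the properly infinite projection $\sum_i e_i^{m_j}=\mathbf{1}$, which is not $\tau$-finite, so compactness in $\mathcal{K}(\M,\tau)$ is not automatic. The plan to handle this is to refine each $v_{i,j}$ as a norm-limit of partial isometries $v_{i,j}^{(\ell)}$ whose commutators with $\alpha$ have $\tau$-finite left support. Such approximants are produced inside each corner $e_i^{m_j}\M e_i^{m_j}$ by applying Lemma \ref{replacement for 5.2} to the restricted trace to obtain $\tau$-finite projections $p_\ell\uparrow e_i^{m_j}$ with $\|[p_\ell,\alpha]\|_{\M}\to0$, truncating the partial isometry against $p_\ell$, and re-completing to a partial isometry using the proper infiniteness of the residual projection. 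Passing to the norm-limit then places $[v_j,\alpha(k)]$ in $\overline{\mathcal{F}(\M,\tau)}^{\|\cdot\|_{\M}}=\mathcal{K}(\M,\tau)$ while preserving both the norm bound from \eqref{4.1.3} and the range orthogonality from \eqref{4.1.1}.
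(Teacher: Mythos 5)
Your construction of $v_j$ at a single dyadic scale, and the norm estimate giving \eqref{4.1.3}, are plausible (modulo the bookkeeping caveat below), but the treatment of \eqref{4.1.2} — which you yourself identify as the hard part — does not work as proposed. If $p_\ell\uparrow e_i^{m_j}$ are $\tau$-finite projections, the truncations $v_{i,j}p_\ell$ converge to $v_{i,j}$ only in the strong operator topology, never in the uniform norm: since $v_{i,j}^\ast v_{i,j}=e_i^{m_j}$ is properly infinite, $\|v_{i,j}-v_{i,j}p_\ell\|_{\M}=\|e_i^{m_j}-p_\ell\|_{\M}=1$ for every $\ell$. So "passing to the norm-limit" of truncated-and-recompleted partial isometries is not available. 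Worse, the "re-completion" step must supply a partial isometry $u_\ell$ with right support $e_i^{m_j}-p_\ell$ (an infinite projection), and for $[v_{i,j}^{(\ell)},\alpha(k)]$ to lie in $\mathcal{F}(\M,\tau)$ or $\mathcal{K}(\M,\tau)$ you would need $[u_\ell,\alpha(k)]$ itself to be compact — which is precisely the kind of object the theorem is trying to produce, so the argument is circular. The single-scale isometry $v_j$ you build has a commutator that is small in norm but spread over the non-$\tau$-finite projection $\mathbf{1}$, and nothing in the construction forces it into $\mathcal{K}(\M,\tau)$. The paper's mechanism for \eqref{4.1.2} is genuinely different: using Lemma \ref{replacement for 5.2} it telescopes $\mathbf{1}$ into $\tau$-finite annuli $p_{m,j}=e_m-e_{m-1}$ with $\|[p_{m,j},\alpha]\|_{\M}\lesssim 2^{-m}$, lifts each to a $\tau$-finitely supported partial isometry $w_{m,j}$ with commutator bound only $\varepsilon_{m,j}$ worse (Lemmas \ref{left orthogonal pre lemma} and \ref{induction lemma}, which rest on the diagonal-approximation trick of Lemmas \ref{infinite projections lemma} and \ref{diagonal pass to tensor lem}, applied to $\alpha\cup\alpha^2$), and sets $v_j=\sum_{m\ge j}w_{m,j}$; then $[v_j,\alpha(i)]$ is an absolutely norm-convergent series of elements of $\mathcal{F}(\M,\tau)$, hence lies in $\mathcal{K}(\M,\tau)$ automatically. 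Some multiscale summation of this kind seems unavoidable; your proposal is missing it.

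A secondary gap is the maintenance of your invariant $e_{i'}^{m'}\wedge(\mathbf{1}-F_j)\sim e_{i'}^{m'}$ at levels finer than $m_j$. An arbitrary "half" $g_{i,j}$ of $e_i^{m_j}\wedge(\mathbf{1}-F_{j-1})$ carries no compatibility with the finer spectral projections $e_{i'}^{m'}\le e_i^{m_j}$; a careless choice can dominate $e_{i'}^{m'}\wedge(\mathbf{1}-F_{j-1})$ entirely and kill the invariant (and hence the induction) at a later stage whose scale is $m'$. Making the halving compatible with all finer dyadic levels simultaneously is exactly the nontrivial part, and "Kaplansky's formula plus the hierarchical structure" is not an argument. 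Note also that your $F_{j-1}$ is an infinite projection, so Lemma \ref{equivalent projections lem} does not apply; the paper sidesteps both difficulties by demanding exact intertwining only of a countably-valued diagonal approximant of $\alpha$ (Lemma \ref{diagonal pass to tensor lem}), and by keeping the accumulated range projection $\tau$-finite at every step of the induction (Lemma \ref{induction lemma}).
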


We need some preparations to prove Theorem \ref{isometries commutator vanishing thm}.

\begin{lemma}\label{infinite projections lemma} Suppose we are in the setting of Theorem \ref{isometries commutator vanishing thm}. For arbitrary $\varepsilon>0,$ there exists a diagonal $n$-tuple $\beta\in\M^n$ such that $\|\beta-\alpha\|_{\M}\leq\varepsilon$ and 
$$WZ^{\ast}(\beta)\cap\mathcal{P}_f(\M)=0.$$
\end{lemma}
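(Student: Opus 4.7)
The plan is to take $\beta$ to be a coordinate-wise scalar simple function of the spectral resolution of $\alpha$ adapted to a dyadic partition fine enough to approximate $\alpha$ to within $\varepsilon$. Concretely, fix $m\in\mathbb{N}$ with $2^{-m-1}\le\varepsilon$, let $\mathrm{At}_m$ be the dyadic partition of $\mathbb{R}^n$ into half-open cubes of side $2^{-m}$ as in Construction \ref{section2 main construction}, and for each $U\in\mathrm{At}_m$ with $e^{\alpha}(U)\ne 0$ denote the centre of $U$ by $c_U=(c_U(i))_{i=1}^n$. Since $\alpha(i)\in\M$ is bounded, only finitely many of the projections $e^{\alpha}(U)$ are non-zero, so
\[
\beta(i):=\sum_{U\in\mathrm{At}_m}c_U(i)\,e^{\alpha}(U),\qquad 1\le i\le n,
\]
is a well-defined finite real linear combination of pairwise orthogonal projections summing to $\mathbf{1}$.

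The easy verifications are that each $\beta(i)$ is a self-adjoint diagonal operator, that $\beta$ is a commuting $n$-tuple (all the $\beta(i)$ lie in the commutative von Neumann algebra generated by the spectral projections of $\alpha$), and that the functional-calculus estimate
\[
\|\alpha(i)-\beta(i)\|_{\M}\le\sup_{U\in\mathrm{At}_m}\sup_{x\in U}|x_i-c_U(i)|\le 2^{-m-1}\le\varepsilon
\]
holds for every $1\le i\le n$; hence $\|\beta-\alpha\|_{\M}\le\varepsilon$.

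The only point that requires a line of argument is the non-existence of finite projections in $WZ^{\ast}(\beta)$. The key observation is that by construction each $\beta(i)$ belongs to $W^{\ast}(\alpha)$, so
\[
W^{\ast}(\beta)\subset W^{\ast}(\alpha),\qquad WZ^{\ast}(\beta)\subset WZ^{\ast}(\alpha).
\]
By Lemma \ref{properly infinite lemma 1}, the hypothesis $WZ^{\ast}(\alpha)\cap\mathcal{P}_f(\M)=\{0\}$ forces every non-zero projection in $WZ^{\ast}(\alpha)$ to be properly infinite, and in particular infinite; the same therefore holds for every non-zero projection in the subalgebra $WZ^{\ast}(\beta)$, giving $WZ^{\ast}(\beta)\cap\mathcal{P}_f(\M)=\{0\}$ as required.

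There is essentially no obstacle here: all the work has been packaged into Lemma \ref{properly infinite lemma 1} and the elementary inclusion $WZ^{\ast}(\beta)\subset WZ^{\ast}(\alpha)$ coming from the fact that $\beta$ is built from spectral projections of $\alpha$. The lemma is really a ``stability'' statement saying that the technical hypothesis \eqref{technical cond} passes to any dyadic simple-function perturbation of $\alpha$ constructed from its own spectral resolution.
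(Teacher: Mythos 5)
Your proof is correct, and while it is the same in spirit as the paper's (approximate $\alpha$ by a simple Borel function of itself, then observe that the finite-projection hypothesis is inherited), the closing argument you give is genuinely cleaner. The paper sets $\beta(i)=f_\varepsilon(\alpha(i))$ with $f_\varepsilon(x)=\varepsilon\lfloor\varepsilon^{-1}x\rfloor$, then passes to the eigenprojections $\{p_k\}$ of $\beta$, invokes Lemma \ref{properly infinite lemma 1} to see each $p_k$ is properly infinite, and finally applies Lemma \ref{properly infinite sequence lemma} to the family $\{p_k\}$. You short-circuit all of that: since each $\beta(i)$ is a finite linear combination of spectral projections of $\alpha$, one has $W^\ast(\beta)\subset W^\ast(\alpha)$ and hence $WZ^\ast(\beta)\subset WZ^\ast(\alpha)$, so $WZ^\ast(\beta)\cap\mathcal{P}_f(\M)\subset WZ^\ast(\alpha)\cap\mathcal{P}_f(\M)=\{0\}$ directly. (In fact, once you note the inclusion you do not need to invoke Lemma \ref{properly infinite lemma 1} at all; that detour is harmless but superfluous.) The only other difference is cosmetic: you use the joint dyadic partition ${\rm At}_m$ of $\mathbb{R}^n$ whereas the paper applies a one-variable floor function coordinate-wise; both produce a diagonal $n$-tuple in $W^\ast(\alpha)$ within $\varepsilon$ of $\alpha$, and both are correct.
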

\begin{proof} Fix $\e>0.$ Let $f_\e(x)=\e\lfloor \e^{-1}x\rfloor$, $x\in \mathbb{R}$. Here, $\lfloor x\rfloor$ denotes the integer part for $x\in \mathbb{R}$. Note that
\begin{equation}\label{diagonalization function}
|f_\e(x)-x|=|\e(\lfloor \e^{-1}x\rfloor-\e^{-1}x)|\le\e.
\end{equation}
Set
$$\beta(i)=f_\e(\alpha(i)),\quad 1\leq i\leq n.$$
It is clear that $\beta$ is a commuting $n$-tuple. Since the range of the function $f_\e$ is countable, it follows that $\beta\in\M^n$ is a diagonal $n$-tuple. From \eqref{diagonalization function}, $\|\beta(i)-\alpha(i)\|_{\M}\le \e$ for $1\le i\le n$. Thus, $\|\beta-\alpha\|_{\M}\le \e.$ 

By Lemma \ref{properly infinite lemma 1}, every non-zero projection in $WZ^\ast(\alpha)$ is properly infinite.  Let $\{p_k\}_{k\in\mathbb{N}}$ be the sequence of all eigenprojections of $\beta$ and note that they are also the spectral projections of $\alpha.$ By Lemma \ref{properly infinite sequence lemma}, 
$$WZ^\ast(\{p_k\}_{k\in \mathbb{N}})\cap \mathcal{P}_f(\M)=\{0\}.$$
Clearly, $WZ^\ast(\beta)=WZ^\ast(\{p_k\}_{k\in \mathbb{N}})$. 
Hence, $WZ^\ast(\beta)\cap \mathcal{P}_f(\M)=\{0\}$.
\end{proof}

\begin{lemma}\label{diagonal pass to tensor lem} Let $\M$ be a properly infinite $\sigma$-finite von Neumann algebra with a faithful normal semifinite trace $\tau$. Let $\beta\in\M^n$ be a diagonal $n$-tuple such that $$WZ^\ast(\beta)\cap \mathcal{P}_f(\M)=0.$$
For a given $\tau$-finite projection $r\in\M,$ there exists an isometry $v\in\M$ such that
$$vv^{\ast}\leq \mathbf{1}-r,\quad \beta=v^{\ast}\beta v.$$
\end{lemma}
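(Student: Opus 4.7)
The plan is to exploit diagonality of $\beta$ to write $\beta(i)=\sum_k \lambda_{i,k} p_k$, where $\{p_k\}_{k\in\mathbb{N}}$ are the (pairwise orthogonal) joint eigenprojections of the commuting diagonal tuple $\beta$ with $\sum_k p_k=\mathbf{1}$, and then construct $v$ piece by piece on each $p_k\M p_k$. By the hypothesis $WZ^*(\beta)\cap\mathcal{P}_f(\M)=\{0\}$ together with Lemma \ref{properly infinite lemma 1}, each non-zero $p_k\in W^*(\beta)\subseteq WZ^*(\beta)$ is properly infinite. The strategy is to choose, for each $k$, a sub-projection $q_k\leq p_k$ with $q_k\leq\mathbf{1}-r$ and $q_k\sim p_k$, select a partial isometry $v_k\in\M$ implementing this equivalence (so $v_k^*v_k=p_k$, $v_kv_k^*=q_k$, and $v_k\in p_k\M p_k$ since $q_k\leq p_k$), and set $v:=\sum_k v_k$ as a strong sum.

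For each $k$, I take $q_k=p_k\wedge(\mathbf{1}-r)$. The Kaplansky parallelogram law gives
\begin{equation*}
p_k-q_k=p_k-p_k\wedge(\mathbf{1}-r)\sim p_k\vee(\mathbf{1}-r)-(\mathbf{1}-r)\leq r,
\end{equation*}
and since $r$ is $\tau$-finite, so is $p_k-q_k$. Proper infiniteness of $p_k$ combined with Lemma \ref{equivalent projections lem} then yields $q_k=p_k-(p_k-q_k)\sim p_k$, permitting the choice of $v_k$. As $q_k\leq p_k$ and the $p_k$'s are pairwise orthogonal, the $q_k$'s are pairwise orthogonal too, and each satisfies $q_k\leq\mathbf{1}-r$.

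Since $v_k\in p_k\M p_k$ with $\|v_k\|\leq 1$ and the $p_k$'s are pairwise orthogonal with sum $\mathbf{1}$, the series $\sum_k v_k$ converges in the strong operator topology to an element $v\in\M$. A direct computation (using $v_l=v_lp_l=q_lv_l$ and the orthogonality of the $p_k$'s and of the $q_k$'s) gives $v^*v=\sum_k v_k^*v_k=\sum_k p_k=\mathbf{1}$ and $vv^*=\sum_k v_kv_k^*=\sum_k q_k\leq\mathbf{1}-r$. Moreover, $vp_l=v_l=p_lv$ for every $l$, so $v$ commutes with every $p_k$; consequently
\begin{equation*}
v^*\beta(i)v=\sum_k \lambda_{i,k}\, v^*p_kv=\sum_k\lambda_{i,k}\, v^*vp_k=\sum_k \lambda_{i,k}\, p_k=\beta(i),\quad 1\leq i\leq n,
\end{equation*}
which is the desired identity $\beta=v^*\beta v$.

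The crux of the argument is the equivalence $q_k\sim p_k$: we need a sub-projection of $p_k$ equivalent to $p_k$ that avoids $r$. The key observation is that the choice $q_k=p_k\wedge(\mathbf{1}-r)$ forces $p_k-q_k$, via Kaplansky, to be equivalent to a sub-projection of the $\tau$-finite $r$, and the proper infiniteness furnished by the hypothesis $WZ^*(\beta)\cap\mathcal{P}_f(\M)=\{0\}$ allows Lemma \ref{equivalent projections lem} to absorb this $\tau$-finite loss. Everything else amounts to routine bookkeeping with orthogonal direct sums of partial isometries.
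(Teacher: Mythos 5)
Your proof is correct and follows essentially the same route as the paper: take the joint eigenprojections $p_k$ of $\beta$ (properly infinite by Lemma \ref{properly infinite lemma 1}), find under each $p_k$ a subprojection orthogonal to $r$ and equivalent to $p_k$ via Lemma \ref{equivalent projections lem}, and sum the resulting partial isometries strongly. The only difference is cosmetic: the paper takes $q_k=p_k-\mathfrak{r}(rp_k)$ and bounds $\tau(\mathfrak{r}(rp_k))\le\tau(r)$ directly, whereas you take $q_k=p_k\wedge(\mathbf{1}-r)$ and use the Kaplansky parallelogram law; these are the same projection, so the two arguments coincide in substance.
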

\begin{proof} Let $(p_l)_{l\in\mathbb{N}}$ be the family of all eigenprojections of $\beta.$ From Lemma \ref{properly infinite lemma 1}, each $p_l$ is properly infinite. Note that $\tau(\mathfrak{r}(rp_l))\le \tau(r)<\infty$ and thus $\mathfrak{r}(rp_l)$ is finite. Hence, for $l\in \mathbb{N}$, $p_l-\mathfrak{r}(rp_l)$ is properly infinite. By Lemma \ref{equivalent projections lem}, $p_l-\mathfrak{r}(rp_l)\sim p_l$. Thus, there exists a partial isometry $v_l\in\M$ such that 
$$v_l^\ast v_l=p_l,\quad v_lv_l^\ast=p_l-\mathfrak{r}(rp_l).$$

We have
$$v_{l_1}^{\ast}v_{l_2}=\delta_{l_1,l_2}p_{l_1},\quad v_{l_1}v_{l_2}^{\ast}=\delta_{l_1,l_2}(p_{l_1}-\mathfrak{r}(rp_{l_1})),\quad  v_{l_1}^{\ast}p_lv_{l_2}=\delta_{l_1,l}\delta_{l_2,l}p_l,\quad l_1,l_2,l\in \mathbb{N}.$$
Note that for $l\in \mathbb{N},$
$$r\cdot(p_l-\mathfrak{r}(rp_l))=rp_l-r\cdot \mathfrak{r}(rp_l)=rp_l-r\cdot p_l\mathfrak{r}(rp_l)=rp_l-rp_l\cdot \mathfrak{r}(rp_l)=0.$$
Thus, $v_lv_l^\ast\le \mathbf{1}-r$ for every $l\in \mathbb{N}.$

Set $v=\sum_{l\in \mathbb{N}} v_l.$ It is immediate from the above paragraph that 
$$v^{\ast}v=\mathbf{1},\quad vv^{\ast}=\sum_{l\in\mathbb{N}}v_lv_l^{\ast}\leq\mathbf{1}-r,\quad v^{\ast}p_kv=p_k,\quad k\in\mathbb{N}.$$
Thus, $v^{\ast}\beta v=\beta$ and this completes the proof.
\end{proof}

\begin{lemma}\label{left orthogonal pre lemma} Suppose we are in the setting of Theorem \ref{isometries commutator vanishing thm}. Let $p,r\in \mathcal{P}(\M),$ and let $r$ be $\tau$-finite. For every $\varepsilon>0,$ there exists a partial isometry $w\in\M$ such that $ww^{\ast}\leq \mathbf{1}-r,$ $w^{\ast}w=p$ and 
$$\|[w,\alpha]\|_{\M}\leq \varepsilon+\|[p,\alpha]\|_{\M}.$$
\end{lemma}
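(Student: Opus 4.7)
\emph{Plan.} The construction of $w$ proceeds by first replacing $\alpha$ by a diagonal approximation, then producing an isometry that truly commutes with this approximation, and finally twisting by $p$.

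First I would invoke Lemma~\ref{infinite projections lemma} with error parameter $\varepsilon/4$ to obtain a diagonal $n$-tuple $\beta\in\M^n$ with $\|\alpha-\beta\|_{\M}\le\varepsilon/4$ and $WZ^{\ast}(\beta)\cap\mathcal{P}_f(\M)=\{0\}$. Applying Lemma~\ref{diagonal pass to tensor lem} to $\beta$ and the given $\tau$-finite projection $r$ then yields an isometry $v\in\M$ with $vv^{\ast}\le\mathbf{1}-r$ and $\beta=v^{\ast}\beta v$. Crucially, I would inspect the construction inside the proof of Lemma~\ref{diagonal pass to tensor lem} --- where $v=\sum_l v_l$ with $v_l^{\ast}v_l=p_l$ and $v_lv_l^{\ast}\le p_l$ for the eigenprojections $p_l$ of $\beta$ --- and read off that $p_l v_l = v_l = v_l p_l$ and $p_k v_l = v_l p_k = 0$ for $k\neq l$. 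Hence $p_k v = v p_k = v_k$ for every $k$, so $v$ commutes with every eigenprojection of $\beta$ and, in particular, with each $\beta(i)$.

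Next I would set $w:=vp$. The identities $w^{\ast}w = pv^{\ast}vp = p$ and $ww^{\ast} = vpv^{\ast} \le vv^{\ast} \le \mathbf{1}-r$ are immediate. For the commutator estimate, I would split
\[
[w,\alpha(i)] = [vp,\beta(i)] + [vp,\alpha(i)-\beta(i)], \quad 1\le i\le n.
\]
Using $[v,\beta(i)]=0$, the first summand collapses to $v[p,\beta(i)]$, whose $\M$-norm is bounded by $\|[p,\beta(i)]\|_{\M} \le \|[p,\alpha(i)]\|_{\M} + 2\|\alpha(i)-\beta(i)\|_{\M}$; the second is bounded by $2\|\alpha(i)-\beta(i)\|_{\M}$. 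Summing these and taking the maximum over $i$ yields $\|[w,\alpha]\|_{\M}\le \|[p,\alpha]\|_{\M} + 4\|\alpha-\beta\|_{\M}\le\|[p,\alpha]\|_{\M}+\varepsilon$, which is the desired bound.

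The only substantive point is the commutation $v\beta=\beta v$: the bare conclusion $\beta=v^{\ast}\beta v$ of Lemma~\ref{diagonal pass to tensor lem} is strictly weaker than the commutation (one checks $\beta v - v\beta = (\mathbf{1}-vv^{\ast})\beta v$, which need not vanish when $vv^{\ast}\neq\mathbf{1}$), so the commutation must be harvested from the explicit form of $v$ supplied by the proof rather than from the statement itself. Everything else is routine triangle-inequality bookkeeping, and the additional freedom offered by \eqref{technical cond} is absorbed at Step~1 via Lemma~\ref{infinite projections lemma}.
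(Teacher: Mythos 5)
Your argument is correct, but it follows a genuinely different route from the paper. The paper applies Lemma \ref{infinite projections lemma} not to $\alpha$ but to the $2n$-tuple $\alpha\cup\alpha^2$ (with error $\varepsilon^2/6$, after normalizing $\|\alpha\|_{\M}\le1$), takes $w=vp$, and then bounds the commutator via the purely algebraic identity
$$|[w,b]|^2=b\,(pbp-w^{\ast}bw)+(pbp-w^{\ast}bw)\,b-(pb^2p-w^{\ast}b^2w)-[p,b]^2,\qquad b\in\alpha,$$
which only needs the conjugation relation $\beta=v^{\ast}\beta v$ furnished by the \emph{statement} of Lemma \ref{diagonal pass to tensor lem}; the price is the squaring trick and the $\varepsilon^2$ bookkeeping, the gain is that no information beyond the lemma statements is used. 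You instead harvest the stronger fact $v\beta=\beta v$ from the explicit construction inside the proof of Lemma \ref{diagonal pass to tensor lem} (your verification is correct: $v_lv_l^{\ast}=p_l-\mathfrak{r}(rp_l)\le p_l$ and $v_l^{\ast}v_l=p_l$ give $p_kv=vp_k=v_k$, whence $v$ commutes with each $\beta(i)$ by SOT-continuity of left and right multiplication), which lets you run a plain linear triangle-inequality estimate $\|[w,\alpha]\|_{\M}\le\|[p,\alpha]\|_{\M}+4\|\alpha-\beta\|_{\M}$ without adjoining squares. Both are valid; your version is more direct, but it is less modular since it rests on internals of another proof rather than its statement --- the clean fix would be to record (or restate Lemma \ref{diagonal pass to tensor lem} with) the strengthened conclusion $v\beta=\beta v$, $vv^{\ast}\le\mathbf{1}-r$, which the same construction delivers, and you rightly flag that the bare identity $\beta=v^{\ast}\beta v$ would not suffice for your collapse $[vp,\beta(i)]=v[p,\beta(i)]$.
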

\begin{proof} We may assume without loss of generality that $\|\alpha\|_{\M}\leq1.$

Applying Lemma \ref{infinite projections lemma} to the $2n$-tuple $\alpha\cup\alpha^2$ where $\alpha^2=(\alpha(1)^2,\ldots,\alpha(n)^2)$, we find a diagonal $2n$-tuple $\beta\in\M^{2n}$ such that
$$\|\alpha\cup\alpha^2-\beta\|_{\M}\leq\frac{\varepsilon^2}{6},\quad WZ^{\ast}(\beta)\cap\mathcal{P}_f(\M)=\{0\}.$$
Applying Lemma \ref{diagonal pass to tensor lem} to the $2n$-tuple $\beta,$ one can find an isometry $v\in\M$ such that $vv^{\ast}\leq \mathbf{1}-r,$ and
$$\beta=v^{\ast}\beta v.$$
We have
\begin{multline*}
\|\alpha\cup \alpha^2-v^{\ast}(\alpha\cup\alpha^2)v\|_{\M}\leq \|\alpha\cup\alpha^2-\beta\|_{\M}+\|v^{\ast}\beta v-v^{\ast}(\alpha\cup\alpha^2)v\|_{\M}\\
\leq 2\|\alpha\cup\alpha^2-\beta\|_{\M}\leq\frac{\varepsilon^2}{3}.
\end{multline*}

We now set $w=vp.$ Clearly, $w$ is a partial isometry such that $w^{\ast}w=p.$ Obviously, $ww^{\ast}\le (\mathbf{1}-r)$, and for $b\in\alpha\cup \alpha^2$ we have
$$pbp-w^{\ast}bw=p(b-v^{\ast}bv)p.$$
Hence,
$$\|pbp-w^{\ast}bw\|_{\M}\leq \frac{\varepsilon^2}{3},\quad b\in\alpha\cup\alpha^2.$$

Note that
\begin{multline*}
|[w,b]|^2=(bw^{\ast}-w^{\ast}b)(wb-bw)\\
=bpb-bw^{\ast}bw-w^{\ast}bw b+w^{\ast}b^2w\\
=bpb+b\cdot (pbp-w^{\ast}bw)-bpbp\\
+(pbp-w^{\ast}bw)\cdot b-pbpb-(pb^2p-w^{\ast}b^2w)+pb^2p\\
=b\cdot (pbp-w^{\ast}bw)+(pbp-w^{\ast}bw)\cdot b\\
-(pb^2p-w^{\ast}b^2w)+(bpb-bpbp-pbpb+pb^2p)\\
=b\cdot (pbp-w^{\ast}bw)+(pbp-w^{\ast}bw)\cdot b-(pb^2p-w^{\ast}b^2w)-[p,b]^2.
\end{multline*}
By the triangle inequality, we have, for every $b\in\alpha,$
\begin{multline*}
\|[w,b]\|_{\M}^2\leq 2\|b\|_{\M}\|pbp-w^{\ast}bw\|_{\M}+\|pb^2p-w^{\ast}b^2w\|_{\M}+\|[p,b]\|_{\M}^2\\
\leq 2\cdot 1\cdot\frac{\varepsilon^2}{3}+\frac{\varepsilon^2}{3}+\|[p,b]\|_{\M}^2\leq (\varepsilon+\|[p,b]\|_{\M})^2.
\end{multline*}
Thus, 
$$\|[w,\alpha(i)]\|_{\M}\le \e+\|[p,\alpha(i)]\|_{\M},\quad 1\le i\le n.$$ 
Hence, $\|[w,\alpha]\|_{\M}\le \e+\|[p,\alpha]\|_{\M}$.
\end{proof}

\begin{lemma}\label{induction lemma} Suppose we are in the setting of Theorem \ref{isometries commutator vanishing thm}. Given a sequence of $\tau$-finite projections $\{p_{k}\}_{k\geq0}\subset\M$, and a sequence $\{\e_k\}_{k\ge0}$ of positive real numbers, there exists a sequence of partial isometries $\{w_k\}_{k\geq0}\subset\M$ such that
$$w_l^{\ast}w_k=\delta_{l,k}p_k,\quad \|[w_k,\alpha]\|_{\M}\leq \e_k+\|[p_k,\alpha]\|_{\M},\quad l,k\geq 0.$$
\end{lemma}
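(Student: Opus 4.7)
The plan is a straightforward induction on $k$, using Lemma \ref{left orthogonal pre lemma} as the one-step construction. At each stage I need to make sure that the newly produced partial isometry has its range orthogonal to the ranges of all previously constructed ones, and the mechanism for doing that is already built into Lemma \ref{left orthogonal pre lemma} through the constraint $ww^{\ast}\le \mathbf{1}-r$.

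More precisely, I would proceed as follows. Set $r_0=0$ and use Lemma \ref{left orthogonal pre lemma} with $p=p_0,$ $r=r_0=0,$ $\varepsilon=\varepsilon_0$ to obtain a partial isometry $w_0\in\M$ with $w_0^{\ast}w_0=p_0$ and $\|[w_0,\alpha]\|_{\M}\le \varepsilon_0+\|[p_0,\alpha]\|_{\M}.$ Assume now that $w_0,\dots,w_{k-1}$ have been constructed with $w_l^{\ast}w_m=\delta_{l,m}p_m$ and the required commutator estimate for $l,m<k.$ Set
$$r_k=\sum_{l=0}^{k-1}w_lw_l^{\ast}.$$
Since $w_lw_l^{\ast}\sim w_l^{\ast}w_l=p_l$ is $\tau$-finite, the projection $r_k$ is a finite sum of $\tau$-finite projections, hence itself $\tau$-finite. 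Apply Lemma \ref{left orthogonal pre lemma} to $p=p_k,$ $r=r_k,$ $\varepsilon=\varepsilon_k$ to obtain a partial isometry $w_k\in\M$ with
$$w_kw_k^{\ast}\le \mathbf{1}-r_k,\qquad w_k^{\ast}w_k=p_k,\qquad \|[w_k,\alpha]\|_{\M}\le \varepsilon_k+\|[p_k,\alpha]\|_{\M}.$$

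It remains to check the orthogonality relations $w_l^{\ast}w_k=0$ for $l<k.$ From $w_kw_k^{\ast}\le \mathbf{1}-r_k$ and $w_lw_l^{\ast}\le r_k$ we get $w_kw_k^{\ast}\cdot w_lw_l^{\ast}=0,$ which is equivalent to $w_lw_l^{\ast}w_kw_k^{\ast}=0$ and therefore $w_l^{\ast}w_k=w_l^{\ast}(w_lw_l^{\ast}w_k)=0$ after noting that $w_l^{\ast}w_lw_l^{\ast}=w_l^{\ast}$ so $w_l^{\ast}w_k=w_l^{\ast}w_lw_l^{\ast}w_k$. Taking adjoints gives $w_k^{\ast}w_l=0.$ Combined with $w_k^{\ast}w_k=p_k,$ this closes the induction step.

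There is no serious obstacle here: the only content of the argument lies in Lemma \ref{left orthogonal pre lemma}, and the induction simply iterates it with the growing ``forbidden region'' $r_k$ absorbing the previously used ranges. The fact that each $r_k$ stays $\tau$-finite (which is needed to invoke Lemma \ref{left orthogonal pre lemma}) is automatic from the $\tau$-finiteness of the $p_l$ together with $w_lw_l^{\ast}\sim p_l.$
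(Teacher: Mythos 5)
Your proof is correct and follows essentially the same route as the paper: an induction whose step invokes Lemma \ref{left orthogonal pre lemma} with the ``forbidden'' projection given by the previously used ranges, the only cosmetic difference being that you take $r_k=\sum_{l<k}w_lw_l^{\ast}$ while the paper uses $q_k=\bigvee_{l<k}\mathfrak{l}(w_l)$, and these coincide under the induction hypothesis that the $\mathfrak{l}(w_l)$ are pairwise orthogonal.
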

\begin{proof} We construct inductively a sequence of partial isometries $\{w_k\}_{k\geq0}\subset\M$ such that
$$w_l^{\ast}w_k=\delta_{l,k}p_k,\quad \|[w_k,\alpha]\|_{\M}\leq \e_k+\|[p_k,\alpha]\|_{\M},\quad 0\leq l\leq k.$$
When such a sequence is already constructed, the equality $w_l^{\ast}w_k=\delta_{l,k}p_k$ for $0\leq k\leq l$ follows by taking adjoints.

By Lemma \ref{left orthogonal pre lemma} (applied with $p=p_0,$ and $r=0$), we find a partial isometry $w_0\in\M$ such that 
$$w_0^{\ast}w_0=p_0,\quad \|[w_0,b]\|_{\M}\leq \e_0+\|[p_0,b]\|_{\M},\quad b\in\alpha.$$
Note that $p_0$ is $\tau$-finite, so $w_0$ is $\tau$-finitely supported. This yields the base of the induction.

For $k\ge1,$ suppose that the sequence $\{w_l\}_{l=0}^{k-1}$ of partial isometries is already constructed. Set
$$q_k=\bigvee_{0\leq l<k}\mathfrak{l}(w_l).$$
We have
$$\tau(q_k)\le \sum_{0\le l<k}\tau(\mathfrak{l}(w_l))=\sum_{0\leq l<k}\tau(\mathfrak{r}(w_l))=\sum_{0\leq l<k}\tau(p_l)<\infty.$$
By Lemma \ref{left orthogonal pre lemma} (applied with $p=p_k$ and $r=q_k$), we can find a partial isometry $w_k\in \M$ such that 
$$w_k^{\ast}w_k=p_k,\quad w_kw_k^{\ast}\leq \mathbf{1}-q_k,\quad \|[w_k,b]\|_{\M}\leq \e_{k}+\|[p_k,b]\|_{\M},\quad b\in\alpha.$$

Since $q_kw_k=0,$ it follows that 
$$\mathfrak{l}(w_l)\cdot w_k=0,\quad l<k,\quad b\in\alpha.$$
Thus,
$$w_l^{\ast}w_k=w_l^{\ast}\cdot \mathfrak{r}(w_l^{\ast})\cdot w_k=w_l^{\ast}\cdot \mathfrak{l}(w_l)\cdot w_k=0,\quad l<k.$$
Thus,
$$w_l^{\ast}w_k=\delta_{l,k}p_k,\quad \|[w_k,\alpha]\|_{\M}\leq \e_k+\|[p_k,\alpha]\|_{\M},\quad 0\leq l\leq k.$$
This yields the step of the induction and, hence, completes the proof.
\end{proof}


\begin{proof}[Proof of Theorem \ref{isometries commutator vanishing thm}] By Lemma \ref{replacement for 5.2}, there exists an increasing sequence $\{e_j\}_{j\in\mathbb{Z}_+}\subset\mathcal{P}_f(\mathcal{M},\tau)$ such that $e_j\uparrow \mathbf{1}$ and
\begin{equation}\label{loml eq0}
\|[e_j,\alpha]\|_{\mathcal{M}}\leq 2^{-j},\quad j\in\mathbb{Z}_+.
\end{equation}
For $j\ge0,$ set 
$$p_{j,j}=e_j,\quad p_{m,j}=e_{m}-e_{m-1},\quad m>j.$$

Let $e_{-1}=0$. Note that
$$\|[p_{m,j},\alpha]\|_{\M}\leq \|[e_m,\alpha]\|_{\M}+\|[e_{m-1},\alpha]\|_{\M}\le 2^{-m}+2^{-(m-1)}=3\cdot 2^{-m},\quad m\geq j.$$

Applying Lemma \ref{induction lemma} to the sequence $\{p_{m,j}\}_{m\geq j\geq0}\subset\mathcal{M}$ and to the sequence $\{\e_{m,j}\}_{m\ge j\ge 0}$ where $\e_{m,j}=2^{-m}$, we obtain a sequence $\{w_{m,j}\}_{m\geq j\geq0}$ of partial isometries such that 
\begin{enumerate}[{\rm a)}]
\item $w_{m,j}^{\ast}w_{m,j}=p_{m,j}$ for all $m\geq j\geq0;$
\item $w_{m_1,j_1}^{\ast}w_{m_2,j_2}=0$ for all $(m_1,j_1)\neq(m_2,j_2);$
\item for all $m\geq j\geq0$ we have
$$\|[w_{m,j},\alpha]\|_{\M}\le \e_{m,j}+\|[p_{m,j},\alpha]\|_{\M}\le 2^{-m}+3\cdot 2^{-m}=2^{-(m-2)}.$$
\end{enumerate} 

Set
$$v_j=\sum_{m\geq j}w_{m,j},\quad j\in\mathbb{Z}_+,$$
where the series converges in the strong operator topology. We have 
$$v_{j_1}^{\ast}v_{j_2}=\sum_{\substack{m_1\geq j_1\\ m_2\geq j_2}}w_{m_1,j_1}^{\ast}w_{m_2,j_2}=\delta_{j_1,j_2}\sum_{m\geq j_1}p_{m,j_1}=\delta_{j_1,j_2}\mathbf{1}.$$
Hence, the assertion \eqref{4.1.1} is proved.

For $j\in \mathbb{Z}_+$ and $1\le i\le n$, 
\begin{equation}\label{commutant with isometry eq}
[v_j,\alpha(i)]=\sum_{m\ge j}[w_{m,j},\alpha(i)].
\end{equation}
Since $w_{m,j}\in \mathcal{F}(\M,\tau)$, it follows that $[w_{m,j},\alpha(i)]\in \mathcal{F}(\M,\tau)$ for $m\ge j$. We now show that the series in the right-hand side of \eqref{commutant with isometry eq} also converges absolutely in the uniform norm $\|\cdot\|_{\M}$ and, therefore, $[v_j,\alpha(i)]\in \mathcal{K}(\M,\tau)$. By the triangle inequality, we have
$$\|[v_j,\alpha(i)]\|_{\M}\leq\sum_{m\geq j}\|[w_{m,j},\alpha(i)]\|_{\M}\leq\sum_{m\geq j}2^{-(m-2)}=2^{-(j-3)},\quad 1\le i\le n.$$
This proves the assertions \eqref{4.1.2} and \eqref{4.1.3} of the theorem. 
\end{proof}

\section{Proof of Theorem \ref{key thm}}\label{V homomorphism section}

\subsection{Strategy of the proof}

In this section we always assume $\M$ to be a properly infinite $\sigma$-finite von Neumann algebra with a faithful normal semifinite trace $\tau$, and let $\varPhi=(E_1,\cdots,E_n)$ be an $n$-tuple of symmetric function spaces on $(0,\infty).$

We prove Theorem \ref{key thm} in several steps. The first step is to establish the existence of a smooth partition of the identity with good properties. The proof of the lemma below can be found in \cite[Theorem 4.3]{BSZZ2023}.

\begin{lemma}\label{smooth partition lem} Let $\alpha\in\mathcal{M}^n$ be a commuting self-adjoint $n$-tuple. Suppose $\varPhi=(E_1,\ldots,E_n)$ is an $n$-tuple of symmetric function spaces on $(0,\infty)$. Let $\varPhi(\M)=(E_1(\M),\ldots,E_n(\M))$. If $k_{\varPhi(\M)}(\alpha)=0,$ then for every $\varepsilon>0,$ there is a sequence $\{e_m\}_{m\ge 1}\subset\mathcal{F}_1^+(\M)$ such that 
$$\sum_{m\geq1}e_m^2 = \mathbf{1},\quad\sum_{m\geq1}\|[\alpha,e_m]\|_{\varPhi(\M)}\leq\varepsilon,$$
where the first series converges in the strong operator topology.
\end{lemma}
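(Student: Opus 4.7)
The plan is to follow the standard Voiculescu-type construction adapted to the symmetric-space setting: first build an increasing filtration in $\mathcal{F}_1^+(\M)$ exhausting $\mathbf{1}$ whose commutators with $\alpha$ decay rapidly in $\varPhi(\M)$-norm, then extract the square partition by taking square roots of its successive increments. For the filtration, use the $\sigma$-finiteness of $\M$ together with the semifiniteness of $\tau$ to choose $\tau$-finite projections $q_n\uparrow\mathbf{1}$; each $q_n$ lies in $\mathcal{F}_1^+(\M)$. Fix a rapidly decreasing positive sequence $\{\delta_n\}$ with $\sum_n\delta_n^{1/2}$ as small as needed (to be absorbed into $\e$ at the end). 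At step $n$, apply the hypothesis $k_{\varPhi(\M)}(\alpha)=0$ to the admissible element $a:=r_{n-1}\vee q_n\in\mathcal{F}_1^+(\M)$ to obtain $r_n\in\mathcal{F}_1^+(\M)$ with $r_n\geq r_{n-1}\vee q_n$ and $\|[r_n,\alpha]\|_{\varPhi(\M)}\leq\delta_n$. Setting $r_0:=0$, the sequence is increasing and $r_n\geq q_n\uparrow\mathbf{1}$ forces $r_n\uparrow\mathbf{1}$ strongly.

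Next define $B_m:=r_m-r_{m-1}\in\mathcal{F}_1^+(\M)$ and $e_m:=B_m^{1/2}$ for $m\geq1$. Since $\mathfrak{l}(B_m^{1/2})=\mathfrak{l}(B_m)$ is $\tau$-finite, each $e_m$ lies in $\mathcal{F}_1^+(\M)$, and
$$\sum_{m\geq1}e_m^2=\sum_{m\geq1}B_m=\lim_{N\to\infty}r_N=\mathbf{1}$$
in the strong operator topology, delivering the first identity. The triangle inequality gives $\|[B_m,\alpha]\|_{\varPhi(\M)}\leq\delta_m+\delta_{m-1}$.

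The main obstacle is to deduce summable control of $\|[e_m,\alpha]\|_{\varPhi(\M)}$ from the control of $\|[B_m,\alpha]\|_{\varPhi(\M)}$: since $t\mapsto\sqrt{t}$ is not Lipschitz at $0$, a direct Birman--Solomyak transfer is unavailable. My approach is to exploit the integral representation
$$B_m^{1/2}=\frac{2}{\pi}\int_0^\infty B_m(\lambda^2+B_m)^{-1}\,d\lambda,$$
which, after computing $[(\lambda^2+B_m)^{-1},\alpha(j)]=-(\lambda^2+B_m)^{-1}[B_m,\alpha(j)](\lambda^2+B_m)^{-1}$, yields
$$[B_m^{1/2},\alpha(j)]=\frac{2}{\pi}\int_0^\infty\lambda^2(\lambda^2+B_m)^{-1}[B_m,\alpha(j)](\lambda^2+B_m)^{-1}\,d\lambda.$$
I would then split the integral at an optimal threshold, bounding the large-$\lambda$ part by $\|(\lambda^2+B_m)^{-1}\|_{\M}\leq\lambda^{-2}$ together with $\|XYZ\|_{\varPhi(\M)}\leq\|X\|_{\M}\|Y\|_{\varPhi(\M)}\|Z\|_{\M}$, and handling the small-$\lambda$ part by passing $\|[B_m,\alpha(j)]\|_{\M}$ out and keeping the $\varPhi(\M)$-norm on the remaining factors (using the cancellation $B_m p=0$ on the kernel projection $p:=\chi_{\{0\}}(B_m)$, which kills the most singular contribution $\lambda^{-2}p[B_m,\alpha(j)]p$ in the integrand). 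The target estimate is
$$\|[B_m^{1/2},\alpha(j)]\|_{\varPhi(\M)}\leq C\|[B_m,\alpha(j)]\|_{\varPhi(\M)}^{1/2}\|[B_m,\alpha(j)]\|_{\M}^{1/2}.$$
Since $\|B_m\|_{\M}\leq1$ forces $\|[B_m,\alpha(j)]\|_{\M}\leq2\|\alpha(j)\|_{\M}$ uniformly in $m$, summation reduces to $C'\sum_m\delta_m^{1/2}$, which is at most $\e$ by the initial choice of $\{\delta_n\}$. The bulk of the technical work lies precisely in justifying this square-root commutator inequality for a general symmetric norm $\varPhi(\M)$ rather than a classical Schatten or $L_p$ setting.
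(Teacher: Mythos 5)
Your first two steps are essentially sound, apart from a small fix: $r_{n-1}\vee q_n$ is not well-defined for a non-projection positive $r_{n-1}$; you should use $\mathfrak{l}(r_{n-1})\vee q_n$ (which dominates both $r_{n-1}$ and $q_n$ since $r_{n-1}\le\mathfrak{l}(r_{n-1})$). The genuine gap is in the third step: the square-root commutator estimate you target is false.

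Take $B=\mathrm{diag}(t,0)$ and $X=\left(\begin{smallmatrix}0&1\\1&0\end{smallmatrix}\right)$ in a $2\times 2$ corner of $\M$. Then $\mu([B,X])$ has both nonzero values equal to $t$ while $\mu([B^{1/2},X])$ has both equal to $\sqrt{t}$, so $\|[B^{1/2},X]\|_{E(\M)}\sim\sqrt{t}$ while $\|[B,X]\|_{E(\M)}^{1/2}\|[B,X]\|_\M^{1/2}\sim t$; the ratio blows up as $t\to0$, so no constant $C$ works. The natural repair, replacing $\|[B,X]\|_\M^{1/2}$ by $\|X\|_\M^{1/2}$ (which is what the small-$\lambda$ piece of your integral actually produces, since the rewriting $\lambda^2R_\lambda[B,X]R_\lambda=\lambda^2[X,R_\lambda]$ puts the uniform norm of $X$ rather than of $[B,X]$ on the singular part), fails too: with $E=L_1$, $B=t\,\mathbf{1}_N\oplus 0_N$, and $X$ the block swap in $M_{2N}$, the left side is $2N\sqrt{t}$ and the right side is $C\sqrt{2Nt}$, so the ratio grows like $\sqrt{N}$. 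The cancellation $B_mp=0$ does not remove the singularity at $\lambda=0$: with $q=\mathfrak{l}(B_m)$, $p=\mathbf{1}-q$, $Y=[B_m,\alpha(j)]$, indeed $pYp=0$, but the cross pieces contribute $qR_\lambda Yp$ and $pYR_\lambda q$, still of size $\lambda^{-2}\|Y\|_{E_j(\M)}$. What the integral representation genuinely gives, after optimizing the cutoff, is
$$\|[B^{1/2},X]\|_{E(\M)}\lesssim \bigl\|\chi_{[0,\tau(\mathfrak{l}(B))]}\bigr\|_E^{1/2}\,\|X\|_\M^{1/2}\,\|[B,X]\|_{E(\M)}^{1/2},$$
and the factor $\|\chi_{[0,\tau(\mathfrak{l}(r_m))]}\|_{E_j}$ necessarily grows without bound as $r_m\uparrow\mathbf{1}$. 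To absorb it one must coordinate the decay of $\delta_m$ with the growth of $\tau(\mathfrak{l}(r_m))$, but the latter is only determined after $r_m$ (hence after $\delta_m$) is chosen. Your sketch does not address this, and, as written, the key estimate underlying the summation is simply not true.
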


The second step for proving Theorem \ref{key thm} is the following assertion. Its proof is based on Lemma \ref{smooth partition lem} and Theorem \ref{isometries commutator vanishing thm}, and is given in the next subsection.

\begin{theorem}\label{pre thm 2.4}
Let $\alpha\in\M^n$ be a commuting self-adjoint $n$-tuple satisfying
$$WZ^*(\alpha)\cap \mathcal{P}_f(\M)=\{0\}.$$
Assume that $\psi\sim_{\M}{\rm Id}_{C^{\ast}(\alpha)}$ and that $k_{\varPhi(\M)}(\psi(\alpha))=0.$ There exists a sequence of isometries $\{v_l\}_{l\geq1}\subset\M$ such that
$$v_{l_1}^{\ast}v_{l_2}=\delta_{l_1,l_2}\mathbf{1},\quad l_1,l_2\in\mathbb{N},$$
$$v_l\psi(\alpha)-\alpha v_l\in \varPhi^0(\M),\quad \|v_l\psi(\alpha)-\alpha v_l\|_{\varPhi(\M)}\leq 2^{-l},\quad l\in\mathbb{N}.$$
\end{theorem}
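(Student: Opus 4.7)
The plan is to adapt Voiculescu's template for approximate intertwining \cite[Lemma 1.2]{V1976} to the semifinite setting. Three ingredients will be combined for each fixed $l\ge1$: (a) a fine smooth partition of unity $\{e_m^{(l)}\}_{m\ge1}\subset\mathcal{F}_1^+(\M)$ for $\psi(\alpha)$, produced by Lemma \ref{smooth partition lem} from $k_{\varPhi(\M)}(\psi(\alpha))=0$, satisfying $\sum_m(e_m^{(l)})^2=\mathbf{1}$ and $\sum_m\|[\psi(\alpha),e_m^{(l)}]\|_{\varPhi(\M)}\le 2^{-l}/3$; (b) a sequence of isometries $\{v_j\}_{j\ge0}$ with pairwise orthogonal ranges and $\|[v_j,\alpha]\|_{\M}\to 0$, produced by Theorem \ref{isometries commutator vanishing thm} applied to $\alpha$; and (c) inner unitaries $\{u_m^{(l)}\}$ coming from $\psi\sim_{\M}{\rm Id}_{C^*(\alpha)}$ with $\|u_m^{(l)}\psi(\alpha(i))-\alpha(i)u_m^{(l)}\|_{\M}$ arbitrarily small. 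To secure orthogonality across different values of $l$, I fix in advance a partition $\mathbb{Z}_+=\bigsqcup_{l\ge1}J_l$ into disjoint infinite subsets and constrain $j_m^{(l)}\in J_l$.

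Concretely, set $q_m^{(l)}:=\mathfrak{l}(e_m^{(l)})$ (which is $\tau$-finite) and $M_m^{(l)}:=\max_{1\le i\le n}\|q_m^{(l)}\|_{E_i(\M)}$ (finite by Lemma \ref{intermidiate lem}). Pick an injection $m\mapsto j_m^{(l)}\in J_l$ with $\|[v_{j_m^{(l)}},\alpha(i)]\|_{\M}\le 2^{-l-m-2}/(1+M_m^{(l)})$, and a unitary $u_m^{(l)}\in\M$ with $\|u_m^{(l)}\psi(\alpha(i))-\alpha(i)u_m^{(l)}\|_{\M}\le 2^{-l-m-2}/(1+M_m^{(l)})$, for all $1\le i\le n$. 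Define $v_l:=\sum_{m\ge1}v_{j_m^{(l)}}u_m^{(l)}e_m^{(l)}$, convergent in the strong operator topology because $\sum_m(e_m^{(l)})^2=\mathbf{1}$. Orthogonality is then immediate: $v_l^*v_l=\sum_m(e_m^{(l)})^2=\mathbf{1}$ by property (i) of Theorem \ref{isometries commutator vanishing thm}, and $v_{l_1}^*v_{l_2}=0$ for $l_1\ne l_2$ since $J_{l_1}\cap J_{l_2}=\varnothing$. The key estimate comes from the decomposition
\begin{align*}
v_l\psi(\alpha(i))-\alpha(i)v_l&=\sum_m v_{j_m^{(l)}}u_m^{(l)}[e_m^{(l)},\psi(\alpha(i))]\\
&\quad+\sum_m v_{j_m^{(l)}}\bigl(u_m^{(l)}\psi(\alpha(i))-\alpha(i)u_m^{(l)}\bigr)e_m^{(l)}\\
&\quad+\sum_m[v_{j_m^{(l)}},\alpha(i)]u_m^{(l)}e_m^{(l)}.
\end{align*}
Applying $\|xy\|_{E_i(\M)}\le\|x\|_{\M}\|y\|_{E_i(\M)}$ together with $\|e_m^{(l)}\|_{E_i(\M)}\le M_m^{(l)}$, the first partial sum is bounded in $E_i(\M)$ by $2^{-l}/3$ via the partition estimate, while each of the remaining two is dominated by $\sum_m 2^{-l-m-2}\le 2^{-l}/3$. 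Each summand is $\tau$-finitely supported (carried by $\mathfrak{l}(e_m^{(l)})$ from the right), so the $E_i(\M)$-convergent limit lies in $E_i^0(\M)$; taking the maximum over $i$ yields $\|v_l\psi(\alpha)-\alpha v_l\|_{\varPhi(\M)}\le 2^{-l}$ and $v_l\psi(\alpha)-\alpha v_l\in\varPhi^0(\M)$.

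The main obstacle is reconciling three heterogeneous error scales within one $\varPhi(\M)$-bound: the partition error is controlled natively in the symmetric-space norm, whereas both the near-intertwining error and the asymptotic commutation $[v_{j_m^{(l)}},\alpha]$ are only controlled in the uniform norm. The bridge is the $\tau$-finiteness of $\mathfrak{l}(e_m^{(l)})$, which converts $\M$-norm smallness into $E_i(\M)$-norm smallness at the multiplicative cost $M_m^{(l)}$; this cost is absorbed by demanding sufficiently strong $\M$-norm smallness on $u_m^{(l)}$ and $v_{j_m^{(l)}}$ at each level $m$, which is feasible because $\psi\sim_{\M}{\rm Id}_{C^*(\alpha)}$ supplies inner approximants of arbitrary closeness and Theorem \ref{isometries commutator vanishing thm} supplies an inexhaustible reserve of near-central orthogonal isometries. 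The remaining combinatorial issue---preserving orthogonality as $l$ varies---is handled a priori by the disjoint partition $\mathbb{Z}_+=\bigsqcup_lJ_l$.
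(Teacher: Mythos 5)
Your proposal is correct and follows essentially the same route as the paper: the same three ingredients (the smooth quasi-central partition of unity from Lemma \ref{smooth partition lem}, inner approximants from $\psi\sim_{\M}{\rm Id}_{C^{\ast}(\alpha)}$, and orthogonal near-central isometries from Theorem \ref{isometries commutator vanishing thm}), combined via the same three-term telescoping decomposition and the same $\tau$-finiteness bridge to pass from uniform-norm to symmetric-norm smallness. The only cosmetic differences are your explicit partition $\mathbb{Z}_+=\bigsqcup_l J_l$ (the paper silently reindexes the isometries doubly) and your use of $M_m^{(l)}=\max_i\|\mathfrak{l}(e_m^{(l)})\|_{E_i(\M)}$ as the normalizing constant, whereas the paper normalizes $\|\cdot\|_{E_j}\le\|\cdot\|_{L_1\cap L_\infty}$ once at the outset and uses $1+\tau(e_{m,l})$.
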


The third step for proving Theorem \ref{key thm} is the following assertion. It is based on Theorem \ref{pre thm 2.4}. We denote by $\{{\bf E}_{ij}\}_{i,j=1}^2$ the matrix units in $M_2(\mathbb{C})$ (note that $\tr({\bf E}_{11})=1$).

\begin{theorem}\label{thm 2.4} Let $\alpha\in\M^n$ be a commuting self-adjoint $n$-tuple. Suppose 
$$WZ^{\ast}(\alpha)\cap \mathcal{P}_f(\M)=0.$$
Assume that $\psi\sim_{\M}{\rm Id}_{C^{\ast}(\alpha)}.$ Suppose
$$k_{\varPhi(\M)}(\alpha)=k_{\varPhi(\M)}(\psi(\alpha))=0.$$
For every $\e>0$, there exists an isometry $w\in\M\otimes M_2(\mathbb{C})$ such that $ww^{\ast}=\mathbf{1}\otimes {\bf E}_{11},$
$$w(\alpha\otimes {\bf E}_{11}+\psi(\alpha)\otimes {\bf E}_{22})-(\alpha\otimes {\bf E}_{11})w\in \varPhi^0(\M\otimes M_2(\mathbb{C})),$$
$$\|w(\alpha\otimes {\bf E}_{11}+\psi(\alpha)\otimes {\bf E}_{22})-(\alpha\otimes {\bf E}_{11})w\|_{\varPhi(\M\otimes M_2(\mathbb{C}))}\leq\varepsilon,$$
where $\varPhi(\M\otimes M_2(\mathbb{C}))=(E_1(\M\otimes M_2(\mathbb{C})),\ldots,E_n(\M\otimes M_2(\mathbb{C}))).$
\end{theorem}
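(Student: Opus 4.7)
The plan is to parametrize $w$ via its block-matrix form and then invoke Theorem~\ref{pre thm 2.4} in the amplified algebra $\M\otimes M_2(\mathbb{C})$, which is again properly infinite and $\sigma$-finite. The constraint $ww^*=\mathbf{1}\otimes{\bf E}_{11}$ forces $w$ to have the form $w=\bigl(\begin{smallmatrix}a&b\\0&0\end{smallmatrix}\bigr)$ for some $a,b\in\M$, and the isometry condition then translates into: $a,b\in\M$ are isometries with $a^*b=0$ and $aa^*+bb^*=\mathbf{1}$. A direct block computation yields
\[
w(\alpha\otimes{\bf E}_{11}+\psi(\alpha)\otimes{\bf E}_{22})-(\alpha\otimes{\bf E}_{11})w=\begin{pmatrix}[a,\alpha]&b\psi(\alpha)-\alpha b\\0&0\end{pmatrix},
\]
so the problem reduces to producing such $a,b$ with $\|[a,\alpha(j)]\|_{E_j(\M)}\le\e/2$ and $\|b\psi(\alpha(j))-\alpha(j)b\|_{E_j(\M)}\le\e/2$, both quantities in $\varPhi^0(\M)$.

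To build $a$ and $b$ simultaneously I apply Theorem~\ref{pre thm 2.4} inside $\M\otimes M_2$ to $\tilde\alpha:=\alpha\otimes\mathbf{1}_{M_2}$ and the $*$-monomorphism $\tilde\Psi:C^*(\tilde\alpha)\to\M\otimes M_2$ defined by $\tilde\Psi(c\otimes\mathbf{1}):=c\otimes{\bf E}_{11}+\psi(c)\otimes{\bf E}_{22}$. Its three hypotheses transfer cleanly. (i) Projections in $WZ^*(\tilde\alpha)=WZ^*(\alpha)\otimes\mathbb{C}\mathbf{1}_{M_2}$ have the form $p\otimes\mathbf{1}_{M_2}$ and are $(\M\otimes M_2)$-finite iff $p\in\mathcal{P}_f(\M)$, so the first hypothesis holds. (ii) From unitaries $\{u_m\}\subset\M$ implementing $\psi\sim_{\M}{\rm Id}$, the unitaries $U_m:=(\mathbf{1}\otimes{\bf E}_{11})+(u_m\otimes{\bf E}_{22})\in\M\otimes M_2$ satisfy $U_m^{-1}(c\otimes\mathbf{1})U_m=c\otimes{\bf E}_{11}+u_m^{-1}cu_m\otimes{\bf E}_{22}\to\tilde\Psi(c\otimes\mathbf{1})$, so $\tilde\Psi\sim_{\M\otimes M_2}{\rm Id}$. (iii) Since $\tilde\Psi(\tilde\alpha)$ commutes with $\mathbf{1}\otimes{\bf E}_{ii}$, any $a\in\mathcal{F}_1^+(\M\otimes M_2)$ is dominated by a block-diagonal $\tau$-finite projection $q_1+q_2$ via Lemma~\ref{left support lemma}, with $q_i$ in the ${\bf E}_{ii}$-corner, and $k_{\varPhi(\M)}(\alpha)=k_{\varPhi(\M)}(\psi(\alpha))=0$ in each corner delivers $r_i\geq q_i$ with small commutator, summing to give $k_{\varPhi(\M\otimes M_2)}(\tilde\Psi(\tilde\alpha))=0$. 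Theorem~\ref{pre thm 2.4} then produces orthogonal isometries $\{V_l\}\subset\M\otimes M_2$ with $V_l\tilde\Psi(\tilde\alpha)-\tilde\alpha V_l\in\varPhi^0(\M\otimes M_2)$ of norm at most $2^{-l}$.

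Fixing $l$ with $2^{-l}\le\e/2$ and setting $V:=V_l$, $P:=VV^*$, the plan is to compose $V$ with a partial isometry $t\in\M\otimes M_2$ satisfying $t^*t=P$, $tt^*=\mathbf{1}\otimes{\bf E}_{11}$. Such $t$ exists: since $\mathbf{1}\otimes{\bf E}_{11}$ is properly infinite with central support $\mathbf{1}$ in the properly infinite $\sigma$-finite algebra $\M\otimes M_2$, we have $\mathbf{1}\otimes{\bf E}_{11}\sim\mathbf{1}\sim P$. Setting $w:=tV$ then gives $ww^*=\mathbf{1}\otimes{\bf E}_{11}$ and $w^*w=\mathbf{1}$, and using $(\alpha\otimes{\bf E}_{11})t=\tilde\alpha t$ (from $t=(\mathbf{1}\otimes{\bf E}_{11})t$ together with the commutativity of $\tilde\alpha$ with $\mathbf{1}\otimes{\bf E}_{11}$) one computes
\[
w\tilde\Psi(\tilde\alpha)-(\alpha\otimes{\bf E}_{11})w=t\bigl(V\tilde\Psi(\tilde\alpha)-\tilde\alpha V\bigr)+[t,\tilde\alpha]V.
\]
The first summand is of $\varPhi$-norm at most $2^{-l}\leq\e/2$ and lies in $\varPhi^0$. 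The hard part will be to ensure that the second summand $[t,\tilde\alpha]V$ is also in $\varPhi^0$ and of small $\varPhi$-norm; this is the main obstacle, since $t$ is not provided with any a priori control of its commutator with $\tilde\alpha$. I plan to resolve it by constructing $t$ from a dyadic spectral refinement of $\tilde\alpha$ combined with the halving isometries in $\M$ given by proper infiniteness (in the spirit of Lemma~\ref{smooth partition lem} and Section~\ref{construction section}), so that each local piece of $t$ approximately commutes with the corresponding spectral compression of $\tilde\alpha$. Alternatively, one can revisit the constructive proof of Theorem~\ref{pre thm 2.4} (itself built on the partial isometries of Theorem~\ref{isometries commutator vanishing thm}) and refine it inside $\M\otimes M_2$ so that $VV^*=\mathbf{1}\otimes{\bf E}_{11}$ holds already, obviating the need for $t$.
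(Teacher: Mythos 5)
Your reduction to the block form $w=a\otimes{\bf E}_{11}+b\otimes{\bf E}_{12}$ with $a^*a=b^*b=\mathbf{1}$, $a^*b=0$, $aa^*+bb^*=\mathbf{1}$, and defect $[a,\alpha]\otimes{\bf E}_{11}+(b\psi(\alpha)-\alpha b)\otimes{\bf E}_{12}$ is correct, and your transfer of the hypotheses of Theorem~\ref{pre thm 2.4} to the amplified pair $(\tilde\alpha,\tilde\Psi)$ in $\M\otimes M_2(\mathbb{C})$ is plausible (and in fact unnecessary: the paper applies Theorem~\ref{pre thm 2.4} directly in $\M$). But the proof has a genuine gap exactly where you flag ``the main obstacle'': producing the component $a$ (equivalently, your partial isometry $t$ with $t^*t=P$, $tt^*=\mathbf{1}\otimes{\bf E}_{11}$) with $[t,\tilde\alpha]\in\varPhi^0$ and small $\varPhi$-norm. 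This is the actual content of the theorem, and neither of your two suggested fixes is carried out or likely to work as stated. A generic $t$ obtained from $\mathbf{1}\otimes{\bf E}_{11}\sim P$ has no reason to have $[t,\tilde\alpha]$ in $\varPhi^0(\M\otimes M_2(\mathbb{C}))$ at all; and a ``dyadic spectral refinement plus halving isometries'' construction only controls $\|[t,\tilde\alpha]\|_{\M}$ (as in Lemma~\ref{5.2 pre lem}/Theorem~\ref{isometries commutator vanishing thm}), which gives neither membership in $\varPhi^0$ nor smallness in $\|\cdot\|_{\varPhi}$ for a partial isometry between two infinite projections (think $E_j=L_1$). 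Your alternative plan (b) --- rerunning the proof of Theorem~\ref{pre thm 2.4} so that $V V^*=\mathbf{1}\otimes{\bf E}_{11}$ --- is wishful: the ranges of the isometries $v_l=\sum_m w_{m,l}u_{m,l}e_{m,l}$ built there are not prescribable, and fixing up the range is precisely what the present theorem must do.

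The missing idea, which is how the paper proceeds, is to use the \emph{whole} orthogonal sequence $\{v_l\}_{l\ge1}$ from Theorem~\ref{pre thm 2.4} rather than a single isometry: set $q_k=\sum_{l\ge k}v_lv_l^*$ and $t_k=\sum_{l\ge k}v_{l+1}v_l^*$, so that $x_k=\mathbf{1}-q_k+t_k$ is an isometry with $x_kx_k^*=\mathbf{1}-v_kv_k^*$, and take $a=x_k$, $b=v_k$ (Lemma~\ref{xk properties}). The point is that $q_k$ and $t_k$ are built from the $v_l$'s, so their commutators with $\alpha$ can be rewritten entirely in terms of the intertwining defects, e.g. $[v_{l+1}v_l^*,\alpha]=v_{l+1}(\alpha v_l-v_l\psi(\alpha))^*-(\alpha v_{l+1}-v_{l+1}\psi(\alpha))v_l^*$; summing the geometrically decaying bounds gives $[q_k,\alpha],[t_k,\alpha]\in\varPhi^0(\M)$ with $\varPhi$-norm at most $2^{2-k}$ (Lemma~\ref{24 main lemma}). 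This ``shift/absorption'' trick is exactly the commutator control your $t$ lacks, and it is the reason Theorem~\ref{pre thm 2.4} is stated for an infinite sequence of isometries with summable defects rather than for a single one. Without this (or an equivalent device), your argument does not close.
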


Having Theorem \ref{thm 2.4} at hand, we prove Theorem \ref{key thm} in Subsection \ref{final subsection}.

\subsection{Proof of Theorem \ref{pre thm 2.4}}

\begin{proof}[Proof of Theorem \ref{pre thm 2.4}] Since $L_1\cap L_\infty$ is embedded continuously in $E_j$ for each $1\le j\le n$ (see Lemma \ref{intermidiate lem}), we may assume without of generality that 
$$\|x\|_{E_j}\le \|x\|_{L_1\cap L_\infty},\quad x\in L_1\cap L_\infty,$$
for each $1\le j\le n.$ 

From Lemma \ref{smooth partition lem}, for every $l\in \mathbb{N},$ there is a sequence $\{e_{m,l}\}_{m,l\geq 1}\subset \mathcal{F}_1^+(\M)$ such that $\sum_{m\geq1}e_{m,l}^2 = \mathbf{1}$ and
\begin{equation}\label{decom1} \sum_{m\geq1}\|[e_{m,l},\psi(\alpha)]\|_{\varPhi(\M)}\le 2^{-(l+1)}.
\end{equation}

As $\psi\sim_{\M}{\rm Id}_{C^{\ast}(\alpha)}$, we can find a sequence of unitaries $\{u_{m,l}\}_{m,l\geq1}\subset\M$ such that
\begin{equation}\label{decom2}
\|u_{m,l}\psi(\alpha)-\alpha u_{m,l}\|_{\M}\leq\frac1{2^{m+l+2}(1+\tau(e_{m,l}))},\quad m,l\in\mathbb{N}.
\end{equation}

As $\M$ is properly infinite, it follows from Theorem \ref{isometries commutator vanishing thm} that there exists a sequence of isometries $\{w_{m,l}\}_{m,l\geq1}\subset \M$ such that
$$w_{m_1,l_1}^\ast w_{m_2,l_2}=\delta_{m_1,m_2}\delta_{l_1,l_2}\mathbf{1},\quad m_1,m_2,l_1,l_2\in \mathbb{N},$$
\begin{equation}\label{decom3}
\|[w_{m,l},\alpha]\|_{\M}\leq\frac1{2^{m+l+2}(1+\tau(e_{m,l}))},\quad m,l\in\mathbb{N}.
\end{equation}

Set
$$v_l =\sum_{m\geq1} w_{m,l}u_{m,l}e_{m,l},\quad l\in\mathbb{N},$$
where the series converges in the strong operator topology. For $l_1,l_2\in \mathbb{N}$, we have
\begin{multline*}
v_{l_1}^{\ast}v_{l_2}=\sum_{m_1,m_2\geq 1}e_{m_1,l_1}u_{m_1,l_1}^{\ast}w_{m_1,l_1}^{\ast}w_{m_2,l_2}u_{m_2,l_2}e_{m_2,l_2}\\
=\sum_{m_1,m_2\geq 1}e_{m_1,l_1}u_{m_1,l_1}^{\ast}\cdot \delta_{m_1,m_2}\delta_{l_1,l_2}\mathbf{1}\cdot u_{m_2,l_2}e_{m_2,l_2}\\
=\delta_{l_1,l_2}\cdot \sum_{m\geq 1}e_{m,l_1}u_{m,l_1}^{\ast}\cdot u_{m,l_1}e_{m,l_1}=\delta_{l_1,l_2}\cdot \sum_{m\geq 1}e_{m,l_1}\cdot e_{m,l_1}=\delta_{l_1,l_2}\mathbf{1}.
\end{multline*}
In particular, each $v_l$ is an isometry.

Now for $1\leq j\leq n,$ we have
\begin{multline}\label{three summations eq}
v_l\psi(\alpha(j))-\alpha(j)v_l=\sum_{m\geq1}w_{m,l}u_{m,l}[e_{m,l},\psi(\alpha(j))]\\
+\sum_{m\geq1}w_{m,l}(u_{m,l}\psi(\alpha(j))-\alpha(j)u_{m,l})e_{m,l}+\sum_{m\geq1}[w_{m,l},\alpha(j)]u_{m,l}e_{m,l}.
\end{multline}

Since for $m,l\ge1$, $e_{m,l}\in \mathcal{F}(\M,\tau)$, it follows that every summand in the three summations in the right-hand side of \eqref{three summations eq} belongs to $\mathcal{F}(\M,\tau)$. We now show that the three series on the right-hand side of \eqref{three summations eq} also converge absolutely in $E_j(\M)$ and, therefore, the left-hand side of \eqref{three summations eq} belongs to $E_j^0(\M).$ By the triangle inequality, we have
\begin{multline*}
\|v_l\psi(\alpha(j))-\alpha(j)v_l\|_{E_j(\M)}\le\sum_{m\geq1}\|w_{m,l}u_{m,l}\|_{\M}\|[e_{m,l},\psi(\alpha(j))]\|_{E_j(\M)}\\
+\sum_{m\geq1}\|w_{m,l}\|_{\M}\|u_{m,l}\psi(\alpha(j))-\alpha(j)u_{m,l}\|_{\M}\|e_{m,l}\|_{E_j(\M)}\\
+\sum_{m\geq1}\|[w_{m,l},\alpha(j)]\|_{\M}\|u_{m,l}\|_{\M}\|e_{m,l}\|_{E_j(\M)}\\
\leq\sum_{m\geq1}\|[e_{m,l},\psi(\alpha(j))]\|_{E_j(\M)}+\sum_{m\geq1}\|u_{m,l}\psi(\alpha(j))-\alpha(j)u_{m,l}\|_{\M}\|e_{m,l}\|_{(L_1\cap L_{\infty})(\M)}\\
+\sum_{m\geq1}\|[w_{m,l},\alpha(j)]\|_{\M}\|e_{m,l}\|_{(L_1\cap L_{\infty})(\M)}
\end{multline*}
From \eqref{decom1}, \eqref{decom2} and \eqref{decom3}, we can conclude that
$$\|v_l\psi(\alpha(j))-\alpha(j)v_l\|_{E_j(\M)}\le 2^{-l}.$$
This completes the proof.
\end{proof}

\subsection{Proof of Theorem \ref{thm 2.4}}

\begin{lemma}\label{xk properties} Let $(v_l)_{l\geq1}\subset\M$ be isometries such that $v_{l_1}^{\ast}v_{l_2}=\delta_{l_1,l_2}\mathbf{1}$ for $l_1,l_2\ge1$:
\begin{enumerate}[{\rm (i)}]
\item\label{xpa} The series $\sum_{l\geq 1}v_lv_l^{\ast}$ and $\sum_{l\geq 1}v_{l+1}v_l^{\ast}$ converge in the strong operator topology.
\item\label{xpb} For $k\ge1,$ let $q_k=\sum_{l\ge k} v_lv_l^\ast$ and $t_k=\sum_{l\ge k}v_{l+1}v_l^\ast$. The operator
$$w_k=(\mathbf{1}-q_k+t_k)\otimes {\bf E}_{11}+v_k\otimes {\bf E}_{12}$$
is an isometry in $\M\otimes M_2(\mathbb{C})$ such that $w_kw_k^{\ast}=\mathbf{1}\otimes {\bf E}_{11}.$
\end{enumerate}
\end{lemma}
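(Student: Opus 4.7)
The plan is to verify both assertions by direct computation, exploiting the Cuntz-type relations $v_{l_1}^{\ast}v_{l_2}=\delta_{l_1,l_2}\mathbf{1}$.

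For \eqref{xpa}, first observe that the range projections $p_l:=v_lv_l^{\ast}$ are pairwise orthogonal: indeed, $p_{l_1}p_{l_2}=v_{l_1}(v_{l_1}^{\ast}v_{l_2})v_{l_2}^{\ast}=\delta_{l_1,l_2}p_{l_1}$. Thus the partial sums of $\sum_{l\ge 1}v_lv_l^{\ast}$ form an increasing sequence of projections dominated by $\mathbf{1}$, and so converge in the strong operator topology. For the second series, I would fix $\xi\in H$ and note that the vectors $v_{l+1}v_l^{\ast}\xi$ lie in the pairwise orthogonal subspaces $p_{l+1}H$, with $\|v_{l+1}v_l^{\ast}\xi\|^2=\|v_l^{\ast}\xi\|^2$. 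Summing and using the previous paragraph gives
$$\sum_{l\ge 1}\|v_{l+1}v_l^{\ast}\xi\|^2=\sum_{l\ge 1}\langle p_l\xi,\xi\rangle=\Big\langle\Big(\sum_{l\ge 1}p_l\Big)\xi,\xi\Big\rangle\le\|\xi\|^2,$$
so the partial sums form a Cauchy sequence in $H$ and the series converges strongly.

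For \eqref{xpb}, the proof reduces to a careful but routine calculation based on the following algebraic identities, each of which follows directly from the relations $v_{l_1}^{\ast}v_{l_2}=\delta_{l_1,l_2}\mathbf{1}$ together with the definitions of $q_k$ and $t_k$:
\begin{enumerate}[\rm (a)]
\item $q_kv_k=v_k$ and $t_k^{\ast}v_k=0$;
\item $q_kt_k=t_k=t_kq_k$, and hence $(\mathbf{1}-q_k)t_k=0$ and $t_k^{\ast}(\mathbf{1}-q_k)=0$;
\item $t_k^{\ast}t_k=q_k$ and $t_kt_k^{\ast}=q_{k+1}$ (by telescoping the double sums and using $v_l^{\ast}v_m=\delta_{l,m}\mathbf{1}$);
\item $q_k=v_kv_k^{\ast}+q_{k+1}$.
\end{enumerate}

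Expanding $w_k^{\ast}w_k$ according to the matrix unit multiplication table, the ${\bf E}_{12}$ and ${\bf E}_{21}$ cross-terms reduce to $(\mathbf{1}-q_k+t_k^{\ast})v_k$ and its adjoint, which vanish by (a). The ${\bf E}_{11}$ entry equals $(\mathbf{1}-q_k)^2+(\mathbf{1}-q_k)t_k+t_k^{\ast}(\mathbf{1}-q_k)+t_k^{\ast}t_k$, which collapses via (b) and (c) to $(\mathbf{1}-q_k)+q_k=\mathbf{1}$. The ${\bf E}_{22}$ entry is $v_k^{\ast}v_k=\mathbf{1}$. Altogether $w_k^{\ast}w_k=\mathbf{1}\otimes({\bf E}_{11}+{\bf E}_{22})$, which is the identity of $\M\otimes M_2(\mathbb{C})$, so $w_k$ is an isometry. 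A parallel computation of $w_kw_k^{\ast}$ leaves only an ${\bf E}_{11}$-component equal to $(\mathbf{1}-q_k+t_k)(\mathbf{1}-q_k+t_k^{\ast})+v_kv_k^{\ast}$, which by (b), (c) and (d) equals $(\mathbf{1}-q_k)+q_{k+1}+v_kv_k^{\ast}=\mathbf{1}$, yielding $w_kw_k^{\ast}=\mathbf{1}\otimes{\bf E}_{11}$.

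The only potential difficulty is bookkeeping while expanding the products; no conceptual obstacle arises, since identities (a)--(d) are immediate consequences of the orthonormal isometry relations and the definitions of $q_k$ and $t_k$.
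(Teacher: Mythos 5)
Your proof is correct and follows essentially the same route as the paper: orthogonality of the range projections $v_lv_l^{\ast}$ (and the vector-wise estimate for $\sum_l v_{l+1}v_l^{\ast}$) for part (i), and the identities $t_k^{\ast}t_k=q_k$, $t_kt_k^{\ast}=q_{k+1}$, $q_kt_k=t_k=t_kq_k$, $t_k^{\ast}v_k=0$ feeding a direct matrix-unit computation of $w_k^{\ast}w_k$ and $w_kw_k^{\ast}$ for part (ii). The only cosmetic difference is that the paper derives the annihilation relations via the support projections of $t_k$ and $x_k=\mathbf{1}-q_k+t_k$, whereas you verify them directly from the series, which changes nothing of substance.
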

\begin{proof} For $l\ge1$, denote for brevity $p_l=v_lv_l^{\ast}$ and $u_l=v_{l+1}v_l^{\ast}.$ Clearly, $p_l$ is a projection and $u_l$ is a partial isometry such that
$$u_l^{\ast}u_l=v_lv_{l+1}^\ast\cdot v_{l+1}v_l^\ast=v_lv_l^\ast=p_l,\quad  u_lu_l^{\ast}=v_{l+1}v_l^\ast\cdot v_lv_{l+1}^\ast=v_{l+1}v_{l+1}^\ast=p_{l+1}.$$
Also, for $l_1,l_2\ge1$,
$$p_{l_1}p_{l_2}=v_{l_1}v_{l_1}^{\ast}\cdot v_{l_2}v_{l_2}^{\ast}=v_{l_1}\cdot \delta_{l_1,l_2}\mathbf{1}\cdot v_{l_2}^{\ast}=\delta_{l_1,l_2}v_{l_1}v_{l_1}^{\ast}=\delta_{l_1,l_2}p_{l_1}.$$
Hence, the series $\sum_{l\geq 1}p_l$ converges in the strong operator topology to a projection. Similarly, the series $\sum_{l\geq 1}u_l$ converges in the strong operator topology to a partial isometry. This proves the assertion \eqref{xpa}.

It is immediate that $q_k$ is a projection, and that $t_k$ is a partial isometry such that $t_k^{\ast}t_k=q_k$ and $t_kt_k^{\ast}=q_{k+1}.$ Thus, $\mathfrak{r}(t_k)=q_k$ and $\mathfrak{l}(t_k)=q_{k+1}.$ Hence, $(1-q_{k+1})t_k=0$ and $(1-q_k)t_k^{\ast}=0.$ Since $q_{k+1}\leq q_k,$ it follows that also $(1-q_k)t_k=0.$

For $k\ge1$, set $x_k=\mathbf{1}-q_k+t_k.$ Using the equalities from the paragraph above, we write
$$x_k^{\ast}x_k=(\mathbf{1}-q_k+t_k)^{\ast}(\mathbf{1}-q_k+t_k)=(\mathbf{1}-q_k)+t_k^{\ast}t_k=\mathbf{1},$$
$$x_kx_k^{\ast}=(\mathbf{1}-q_k+t_k)(\mathbf{1}-q_k+t_k)^{\ast}=(\mathbf{1}-q_k)+t_kt_k^{\ast}={\bf 1}-p_k.$$
Hence, $\mathfrak{l}(x_k)=\mathbf{1}-\mathfrak{r}(v_k^\ast)$ and thus $v_k^\ast x_k=0.$ Therefore,
\begin{multline*}
w_k^{\ast}w_k=(x_k^{\ast}\otimes {\bf E}_{11}+v_k^{\ast}\otimes {\bf E}_{21})(x_k\otimes {\bf E}_{11}+v_k\otimes {\bf E}_{12})\\
=x_k^{\ast}x_k\otimes {\bf E}_{11}+x_k^{\ast}v_k\otimes {\bf E}_{12}+v_k^{\ast}x_k\otimes {\bf E}_{21}+v_k^{\ast}v_k\otimes {\bf E}_{22}\\
=\mathbf{1}\otimes {\bf E}_{11}+0\otimes {\bf E}_{12}+0\otimes {\bf E}_{21}+\mathbf{1}\otimes {\bf E}_{22}=\mathbf{1}\otimes \mathbf{1}_{M_2(\mathbb{C})},
\end{multline*}
\begin{multline*}
w_kw_k^{\ast}=(x_k\otimes {\bf E}_{11}+v_k\otimes {\bf E}_{12})(x_k^{\ast}\otimes {\bf E}_{11}+v_k^{\ast}\otimes {\bf E}_{21})\\
=x_kx_k^{\ast}\otimes {\bf E}_{11}+v_kv_k^{\ast}\otimes {\bf E}_{11}=\mathbf{1}\otimes {\bf E}_{11},
\end{multline*}
where $\mathbf{1}_{M_2(\mathbb{C})}$ is the identity element in $M_2(\mathbb{C})$. This proves the assertion \eqref{xpb}.
\end{proof}

\begin{lemma}\label{24 main lemma} Let $(v_l)_{l\geq1}$ be the isometries given by Theorem \ref{pre thm 2.4}. Set
$$q_k=\sum_{l\ge k}v_lv_l^\ast,\quad t_k=\sum_{l\ge k}v_{l+1}v_l^\ast,\quad k\in\mathbb{N},$$
where the series converge in the strong operator topology. We have 
\begin{enumerate}[\rm (i)]
\item $[q_k,\alpha]\in \varPhi^0(\M)$ and $\|[q_k,\alpha]\|_{\varPhi(\M)}\le 2^{2-k};$
\item $[t_k,\alpha]\in\varPhi^0(\M)$ and $\|[t_k,\alpha]\|_{\varPhi(\M)}\le 2^{2-k}.$
\end{enumerate}
\end{lemma}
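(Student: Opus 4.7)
The plan is to compute each commutator $[v_lv_l^*,\alpha(j)]$ and $[v_{l+1}v_l^*,\alpha(j)]$ explicitly in terms of the ``small'' correction operators $c_{l,j}:=v_l\psi(\alpha(j))-\alpha(j)v_l\in E_j^0(\M)$ supplied by Theorem~\ref{pre thm 2.4}, which satisfy $\|c_{l,j}\|_{E_j(\M)}\le 2^{-l}$. Taking adjoints yields $v_l^*\alpha(j)=\psi(\alpha(j))v_l^*-c_{l,j}^*$, and a direct substitution gives
\[
[v_lv_l^*,\alpha(j)]=c_{l,j}v_l^*-v_lc_{l,j}^*,\qquad [v_{l+1}v_l^*,\alpha(j)]=c_{l+1,j}v_l^*-v_{l+1}c_{l,j}^*.
\]
Since $v_l,v_{l+1}$ are contractions and $E_j^0(\M)$ is a two-sided ideal in $\M$, each term lies in $E_j^0(\M)$ with $E_j(\M)$-norm at most $2\cdot 2^{-l}=2^{1-l}$.

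Summing in $l\ge k$, both series $\sum_{l\ge k}[v_lv_l^*,\alpha(j)]$ and $\sum_{l\ge k}[v_{l+1}v_l^*,\alpha(j)]$ converge absolutely in $E_j(\M)$ with norm bounded by $\sum_{l\ge k}2^{1-l}=2^{2-k}$, yielding limits $S_j^q,S_j^t\in E_j^0(\M)$ of norm at most $2^{2-k}$. Maximising over $1\le j\le n$ will give the claimed bound $2^{2-k}$ in $\|\cdot\|_{\varPhi(\M)}$ once these limits are identified with the desired commutators.

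The main obstacle is precisely this identification: for $q_k^N:=\sum_{l=k}^Nv_lv_l^*$ the partial sums $[q_k^N,\alpha(j)]$ converge to $S_j^q$ in $E_j(\M)$-norm but, via $q_k^N\to q_k$ in SOT and boundedness of $\alpha(j)$, they converge to $[q_k,\alpha(j)]$ only in SOT; these two modes of convergence are not directly comparable. The bridge I propose is to fix a $\tau$-finite projection $p\in\M$ and right-multiply by $p$: using $E_j(\M)\hookrightarrow(L_1+L_\infty)(\M)$ together with the fact that $(L_1+L_\infty)(\M)\cdot p\subset L_1(\M)$ continuously (since for $x=y+z$ with $y\in L_1,\ z\in\M$ one has $\|xp\|_{L_1}\le\|y\|_{L_1}+\|z\|_\M\tau(p)$), right multiplication by $p$ is a bounded map $E_j(\M)\to L_1(\M)$. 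Hence $([q_k^N,\alpha(j)]-S_j^q)p\to 0$ in $L_1(\M)$, and combined with the uniform $\M$-bound $\|[q_k^N,\alpha(j)]\|_\M\le 2\|\alpha(j)\|_\M$, this gives ultraweak convergence in $\M$ (by density of $L_1\cap L_\infty$ in $L_1$ in the predual). Pairing with the normal vector functionals $\omega_{\xi,\eta}\in\M_*$ and subtracting from the SOT convergence $(S_N^q-[q_k,\alpha(j)])p\xi\to 0$ in $H$, we obtain $\langle(S_j^q-[q_k,\alpha(j)])p\xi,\eta\rangle=0$ for every $\xi,\eta\in H$, so $S_j^qp=[q_k,\alpha(j)]p$. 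Since $\tau$-finite projections have SOT-supremum $\mathbf{1}$ by semifiniteness and $\sigma$-finiteness of $\M$, this forces $S_j^q=[q_k,\alpha(j)]\in E_j^0(\M)$ with $\|[q_k,\alpha(j)]\|_{E_j(\M)}\le 2^{2-k}$. The assertion for $t_k$ is proved identically, using the second commutator formula above.
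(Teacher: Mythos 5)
Your proposal is correct and follows essentially the same route as the paper: the same commutator identities expressing $[v_lv_l^\ast,\alpha(j)]$ and $[v_{l+1}v_l^\ast,\alpha(j)]$ through $v_l\psi(\alpha(j))-\alpha(j)v_l$, followed by absolute convergence in $E_j(\M)$ and the geometric-series bound $2^{2-k}$. The extra work you do to identify the $E_j(\M)$-norm limit of the partial sums with the SOT limit $[q_k,\alpha(j)]$ (multiplying by $\tau$-finite projections and testing against normal functionals) is a sound filling-in of a step the paper leaves implicit, apart from trivial slips (the notation $S_N^q$, and the unstated but easily verified fact that the limit $S_j^qp$ lies in $\M$ so that the ultraweak pairing makes sense).
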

\begin{proof}
For $l\ge1,$
$$[v_{l}v_l^{\ast},\alpha]=v_{l}\cdot (\alpha v_l)^{\ast}-\alpha v_{l}\cdot v_l^{\ast}=v_{l}\cdot (\alpha v_l-v_l\psi(\alpha))^{\ast}-(\alpha v_{l}-v_{l}\psi(\alpha))v_l^{\ast}.$$
Thus,
$$[q_k,\alpha]=\sum_{l\geq k}[v_{l}v_l^{\ast},\alpha]=\sum_{l\geq k}v_{l}\cdot (\alpha v_l-v_l\psi(\alpha))^{\ast}-\sum_{l\geq k}(\alpha v_{l}-v_{l}\psi(\alpha))\cdot v_l^{\ast}.$$
By Theorem \ref{pre thm 2.4},
$$\alpha v_l-v_l\psi(\alpha)\in \varPhi^0(\M),\quad \|\alpha v_l-v_l\psi(\alpha)\|_{\varPhi(\M)}\le 2^{-l},\quad l\in\mathbb{N}.$$
By the triangle inequality, we have
$$\|[q_k,\alpha]\|_{\varPhi(\M)}
\leq 2\sum_{l\geq k}\|\alpha v_l-v_l\psi(\alpha)\|_{\varPhi(\M)}\le 2\cdot\sum_{l\ge k}2^{-l}= 2^{2-k}.$$
This proves the first assertion.

We now prove the second assertion. For $l\ge1$,
$$[v_{l+1}v_l^{\ast},\alpha]=v_{l+1}\cdot (\alpha v_l)^{\ast}-\alpha v_{l+1}\cdot v_l^{\ast}=v_{l+1}\cdot (\alpha v_l-v_l\psi(\alpha))^{\ast}-(\alpha v_{l+1}-v_{l+1}\psi(\alpha))v_l^{\ast}.$$
Thus,
$$[t_k,\alpha]=\sum_{l\geq k}[v_{l+1}v_l^{\ast},\alpha]=\sum_{l\geq k}v_{l+1}\cdot (\alpha v_l-v_l\psi(\alpha))^{\ast}-\sum_{l\geq k}(\alpha v_{l+1}-v_{l+1}\psi(\alpha ))\cdot v_l^{\ast}.$$
By the triangle inequality and Theorem \ref{pre thm 2.4},
\begin{multline*} \|[t_k,\alpha]\|_{\varPhi(\M)}\\
\leq \sum_{l\geq k}\|\alpha v_l-v_l\psi(\alpha)\|_{\varPhi(\M)}+\sum_{l\geq k}\|\alpha v_{l+1}-v_{l+1}\psi(\alpha)\|_{\varPhi(\M)}\le \sum_{l\ge k}2^{1-l}=2^{2-k}.
\end{multline*}
This proves the second assertion.
\end{proof}

\begin{proof}[Proof of Theorem \ref{thm 2.4}] Let $(v_l)_{l\geq1}$ be the isometries given by Theorem \ref{pre thm 2.4}. For $k\ge1$, let $q_k=\sum_{l\ge k}v_lv_l^\ast$ and $t_k=\sum_{l\ge k} v_{l+1}v_l^\ast,$ where the series converge in the strong operator topology by Lemma \ref{xk properties} \eqref{xpa}. Let $w_k\in\M\otimes M_2(\mathbb{C})$ be an isometry as in Lemma \ref{xk properties}. We have
\begin{multline*}
(\alpha\otimes {\bf E}_{11})w_k-w_k(\alpha\otimes {\bf E}_{11}+\psi(\alpha)\otimes {\bf E}_{22})\\
=(\alpha\otimes {\bf E}_{11})((\mathbf{1}-q_k+t_k)\otimes {\bf E}_{11}+v_k\otimes {\bf E}_{12})\\
-((\mathbf{1}-q_k+t_k)\otimes {\bf E}_{11}+v_k\otimes {\bf E}_{12})(\alpha\otimes {\bf E}_{11}+\psi(\alpha)\otimes {\bf E}_{22})\\
=\alpha(\mathbf{1}-q_k+t_k)\otimes {\bf E}_{11}+\alpha v_k\otimes {\bf E}_{12}\\
-(\mathbf{1}-q_k+t_k)\alpha\otimes {\bf E}_{11}-v_k\psi(\alpha)\otimes {\bf E}_{12}\\
=[q_k,\alpha]\otimes {\bf E}_{11}-[t_k,\alpha]\otimes {\bf E}_{11}+(\alpha v_k-v_k\psi(\alpha))\otimes {\bf E}_{12}.
\end{multline*}
By the triangle inequality, we have
\begin{multline*}
\|(\alpha\otimes {\bf E}_{11})w_k-w_k(\alpha\otimes {\bf E}_{11}+\psi(\alpha)\otimes {\bf E}_{22})\|_{\varPhi(\M\otimes M_2(\mathbb{C}))}\\
\leq\|[q_k,\alpha]\|_{\varPhi(\M)}+\|[t_k,\alpha]\|_{\varPhi(\M)}+\|\alpha v_k-v_k\psi(\alpha)\|_{\varPhi(\M)}.
\end{multline*}
Since $k\in\mathbb{N}$ is arbitrary, the assertion follows from Lemma \ref{24 main lemma} and Theorem \ref{pre thm 2.4}.
\end{proof}

\subsection{Proof of Theorem \ref{key thm}}\label{final subsection}

\begin{proof}[Proof of Theorem \ref{key thm}] Fix $\e>0.$ By Theorem \ref{thm 2.4}, there exists an isometry $w_1\in\M\otimes M_2(\mathbb{C})$ such that $w_1w_1^{\ast}=\mathbf{1}\otimes {\bf E}_{11},$
$$w_1(\alpha\otimes {\bf E}_{11}+\psi(\alpha)\otimes {\bf E}_{22})-(\alpha\otimes {\bf E}_{11})w_1\in \varPhi^0(\M\otimes M_2(\mathbb{C})),$$
\begin{equation}\label{main estimate 1}
\|w_1(\alpha\otimes {\bf E}_{11}+\psi(\alpha)\otimes {\bf E}_{22})-(\alpha\otimes {\bf E}_{11})w_1\|_{\varPhi(\M\otimes M_2(\mathbb{C}))}\leq \varepsilon.
\end{equation}

Since $\psi$ is injective and $\psi\sim_{\M}{\rm Id}_{C^{\ast}(\alpha)}$ by the assumption, it follows that $\psi^{-1}\sim_{\M} {\rm Id}_{C^{\ast}(\psi(\alpha))}$ on $C^\ast(\psi(\alpha))$. By assumption, $WZ^\ast(\psi(\alpha))\cap \mathcal{P}_f(\M)=\{0\}$. From Theorem \ref{thm 2.4} (applied to the $n$-tuple $\psi(\alpha)$ and the $\ast$-homomorphism $\psi^{-1}$), there exists an isometry $w_2\in\M \otimes M_2(\mathbb{C})$ such that $w_2w_2^{\ast}=\mathbf{1}\otimes {\bf E}_{11},$
$$w_2(\psi(\alpha)\otimes {\bf E}_{11}+\alpha\otimes {\bf E}_{22})-(\psi(\alpha)\otimes {\bf E}_{11})w_2\in \varPhi^0(\M\otimes M_2(\mathbb{C})),$$
$$\|w_2(\psi(\alpha)\otimes {\bf E}_{11}+\alpha\otimes {\bf E}_{22})-(\psi(\alpha)\otimes {\bf E}_{11})w_2\|_{\varPhi(\M\otimes M_2(\mathbb{C}))}\leq \varepsilon.$$
Taking adjoint, we obtain that
\begin{equation}\label{main estimate 2}
\|(\psi(\alpha)\otimes {\bf E}_{11}+\alpha\otimes {\bf E}_{22})w_2^\ast-w_2^\ast(\psi(\alpha)\otimes {\bf E}_{11})\|_{\varPhi(\M\otimes M_2(\mathbb{C}))}\leq \varepsilon.
\end{equation}
	
Set $y=w_1Uw_2^{\ast},$ where $U=\mathbf{1}\otimes ({\bf E}_{12}+{\bf E}_{21}).$ It is immediate that $y^{\ast}y=yy^{\ast}=\mathbf{1}\otimes {\bf E}_{11}.$ Hence, there exists a unitary element $u\in\mathcal{M}$ such that $y=u\otimes {\bf E}_{11}.$ We prove the assertion for exactly this $u.$
	
We have
\begin{multline*}
y(\psi(\alpha)\otimes {\bf E}_{11})-(\alpha\otimes {\bf E}_{11})y=w_1Uw_2^\ast(\psi(\alpha)\otimes {\bf E}_{11})-(\alpha\otimes {\bf E}_{11})w_1Uw_2^\ast\\
=w_1U(w_2^\ast(\psi(\alpha)\otimes {\bf E}_{11})-(\psi(\alpha)\otimes {\bf E}_{11}+\alpha\otimes {\bf E}_{22})w_2^\ast)\\
+w_1U(\psi(\alpha)\otimes {\bf E}_{11}+\alpha\otimes {\bf E}_{22})w_2^\ast-(\alpha\otimes {\bf E}_{11})w_1Uw_2^\ast\\
=w_1U(w_2^\ast(\psi(\alpha)\otimes {\bf E}_{11})-(\psi(\alpha)\otimes {\bf E}_{11}+\alpha\otimes {\bf E}_{22})w_2^\ast)\\
+(w_1(\alpha\otimes {\bf E}_{11}+\psi(\alpha)\otimes {\bf E}_{22})-(\alpha\otimes {\bf E}_{11})w_1)Uw_2^\ast,
\end{multline*}
where the last equality holds because
$$U(\psi(\alpha)\otimes {\bf E}_{11}+\alpha\otimes {\bf E}_{22})=(\alpha\otimes {\bf E}_{11}+\psi(\alpha)\otimes {\bf E}_{22})U.$$ 
By \eqref{main estimate 1}, \eqref{main estimate 2} and by the triangle inequality, we have 
$$y(\psi(\alpha)\otimes {\bf E}_{11})-(\alpha\otimes {\bf E}_{11})y\in \varPhi^0(\M\otimes M_2(\mathbb{C})),$$
\begin{multline*}
\|y(\psi(\alpha)\otimes {\bf E}_{11})-(\alpha\otimes {\bf E}_{11})y\|_{\varPhi(\M\otimes M_2(\mathbb{C}))}\\
\leq \|w_2^\ast(\psi(\alpha)\otimes {\bf E}_{11})-(\psi(\alpha)\otimes {\bf E}_{11}+\alpha\otimes {\bf E}_{22})w_2^\ast\|_{\varPhi(\M\otimes M_2(\mathbb{C}))}\\
+\|w_1(\alpha\otimes {\bf E}_{11}+\psi(\alpha)\otimes {\bf E}_{22})-(\alpha\otimes {\bf E}_{11})w_1\|_{\varPhi(\M\otimes M_2(\mathbb{C}))}\le2\varepsilon.
\end{multline*}
Clearly, 
$$y(\psi(\alpha)\otimes {\bf E}_{11})-(\alpha\otimes {\bf E}_{11})y=(u\psi(\alpha)-\alpha u)\otimes {\bf E}_{11}.$$
Thus, 
$$u\psi(\alpha)-\alpha u\in\varPhi^0(\M),$$
$$\|u\psi(\alpha)-\alpha u\|_{\varPhi(\M)}=\|(u\psi(\alpha)-\alpha u)\otimes {\bf E}_{11}\|_{\varPhi(\M\otimes M_2(\mathbb{C}))}\le 2\e.$$
Since $u\in\M$ is unitary, the assertion follows.
\end{proof}

\section{Proof of Theorem \ref{main theorem}}\label{final section}

We prove Theorem \ref{main theorem} in this section. We first focus on the hard part of Theorem \ref{main theorem}, namely, the implication \eqref{mta}$\to$\eqref{mtb}. Theorem \ref{key thm} and the construction in Section \ref{construction section} are two important ingredients in this process.

The case when $\M$ is finite is simple, and the proof is given below.

\begin{lemma}\label{approximation in finite case} Let $\M$ be a finite $\sigma$-finite von Neumann algebra with a faithful normal semifinite trace $\tau$ and let $n\in \mathbb{N}$. Let $\varPhi=(E_1,\ldots,E_n)$ be an $n$-tuple of symmetric function spaces on $(0,\infty).$ Suppose $\alpha\in\M^n$ is a commuting self-adjoint $n$-tuple. For every $\varepsilon>0,$ there exists a diagonal $n$-tuple $\delta\in\M^n$ such that $\alpha-\delta\in \varPhi^0(\M)$ and $\|\alpha-\delta\|_{\varPhi(\M)}\le\e.$
\end{lemma}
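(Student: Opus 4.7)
The plan is to decompose $\M$ as a countable direct sum of $\tau$-finite central summands and then perform a joint-spectral approximation on each summand, where the problem reduces to a routine operator-norm discretisation. Since $\M$ is finite, ${\bf 1}$ is a finite projection, and Lemma \ref{finite and tau finite}~\eqref{ftfb} applied with $f={\bf 1}$ yields a central partition of the identity $\{z_i\}_{i\in I}\subset\mathcal{P}(\mathcal{Z}(\M))$ with $\tau(z_i)<\infty$ for every $i\in I$. The $\sigma$-finiteness of $\M$ forces $I$ to be at most countable, so we index $I=\mathbb{N}$. Because each $z_i$ is central it commutes with $\alpha$, and the compression $z_i\alpha\in(z_i\M)^n$ is a commuting self-adjoint $n$-tuple on the $\tau$-finite corner $z_i\M$.

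On each corner, fix $\eta_i>0$ to be chosen later and partition the cube $[-\|\alpha\|_{\M},\|\alpha\|_{\M}]^n$ (which contains the joint spectrum of $\alpha$) into finitely many Borel sets $U_{i,1},\ldots,U_{i,N_i}$ of diameter at most $\eta_i$, picking representatives $\lambda_{i,k}=(\lambda_{i,k}(1),\ldots,\lambda_{i,k}(n))\in U_{i,k}$. Set
$$\delta_i(j)=\sum_{k=1}^{N_i}\lambda_{i,k}(j)\cdot z_ie^{\alpha}(U_{i,k}),\qquad 1\le j\le n.$$
The projections $\{z_ie^{\alpha}(U_{i,k})\}_{k}$ are pairwise orthogonal and sum to $z_i$, so $\delta_i$ is a diagonal $n$-tuple in $z_i\M$, and the joint spectral theorem gives $\|z_i\alpha(j)-\delta_i(j)\|_{\M}\le\eta_i$. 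Since $z_i\alpha(j)-\delta_i(j)$ is supported under $z_i$ and $\tau(z_i)<\infty$, the difference lies in $\mathcal{F}(\M,\tau)$, and $\mu(z_i\alpha(j)-\delta_i(j))\le\eta_i\chi_{[0,\tau(z_i))}$, whence
$$\|z_i\alpha(j)-\delta_i(j)\|_{E_j(\M)}\le\eta_i\|\chi_{[0,\tau(z_i))}\|_{E_j}.$$
The quantity $\|\chi_{[0,\tau(z_i))}\|_{E_j}$ is finite because $\chi_{[0,\tau(z_i))}\in L_1\cap L_\infty\subset E_j$ by Lemma \ref{intermidiate lem}.

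Now choose $\eta_i>0$ so small that $\eta_i\max_{1\le j\le n}\|\chi_{[0,\tau(z_i))}\|_{E_j}\le 2^{-i}\varepsilon$, and define $\delta(j)=\sum_{i\in\mathbb{N}}\delta_i(j)$. The series converges in the strong operator topology because the supports $z_i$ are pairwise orthogonal and $\sup_i\|\delta_i(j)\|_{\M}\le\|\alpha(j)\|_{\M}+\sup_i\eta_i<\infty$. The full family $\{z_ie^{\alpha}(U_{i,k})\}_{i,k}$ is pairwise orthogonal (different $i$'s are killed by orthogonality of the $z_i$, different $k$'s by orthogonality of the spectral projections) and sums to ${\bf 1}$, so $\delta$ is an honest diagonal $n$-tuple in $\M$. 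The residual satisfies
$$\alpha(j)-\delta(j)=\sum_{i\in\mathbb{N}}\bigl(z_i\alpha(j)-\delta_i(j)\bigr),$$
with the series converging absolutely in $E_j(\M)$ and
$$\|\alpha(j)-\delta(j)\|_{E_j(\M)}\le\sum_{i\in\mathbb{N}}\eta_i\|\chi_{[0,\tau(z_i))}\|_{E_j}\le\sum_{i\in\mathbb{N}}2^{-i}\varepsilon=\varepsilon.$$
Each partial sum lies in $\mathcal{F}(\M,\tau)\subset E_j^0(\M)$, and $E_j^0(\M)$ is closed in $E_j(\M)$, so $\alpha(j)-\delta(j)\in E_j^0(\M)$ for every $j$; taking the maximum in $j$ delivers the required estimate.

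There is no substantial obstacle here: the argument is a bookkeeping reduction to the $\tau$-finite case, where operator-norm spectral approximation automatically upgrades to $E_j$-norm control via the continuous embedding $(L_1\cap L_\infty)(\M)\hookrightarrow E_j(\M)$ from Lemma \ref{intermidiate lem}. The only points requiring care are the existence of the $\tau$-finite central partition (handled by Lemma \ref{finite and tau finite}) and the fact that the aggregated operator $\delta$ remains diagonal, both of which are clean.
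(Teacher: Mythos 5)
Your proof is correct and follows essentially the same approach as the paper: both decompose $\mathbf{1}$ into a countable $\tau$-finite central partition via Lemma \ref{finite and tau finite}~\eqref{ftfb}, approximate each corner $z_i\alpha$ in the uniform norm by a diagonal $n$-tuple, and sum, upgrading the operator-norm bound to an $E_j(\M)$-bound via the embedding $L_1\cap L_\infty\subset E_j$. The only differences are cosmetic — you make the spectral discretisation explicit via a cube partition where the paper simply invokes the spectral theorem, and you bound $\|z_i\alpha(j)-\delta_i(j)\|_{E_j(\M)}$ directly via $\mu(\cdot)\le\eta_i\chi_{[0,\tau(z_i))}$ whereas the paper routes through $\|\cdot\|_{(L_1\cap L_\infty)(\M)}\le(1+\tau(z_i))\|\cdot\|_{\M}$ after normalising $\|\cdot\|_{E_j}\le\|\cdot\|_{L_1\cap L_\infty}$.
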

\begin{proof} Recall that $L_1\cap L_\infty$ is embedded continuously in any symmetric function space $E$ on $(0,\infty)$ (see Lemma \ref{intermidiate lem}). Without loss of generality, we may assume that $\|\cdot\|_{E_j}\leq\|\cdot\|_{L_1\cap L_{\infty}}$ for every $1\leq j\leq n.$

By Lemma \ref{finite and tau finite} (ii), there exists a central partition of the identity $\{z_i\}_{i\in I}$ in $\M$ such that $z_i$ is $\tau$-finite for each $i\in I.$ Since $\M$ is $\sigma$-finite, it follows that $I$ is countable, and thus we may write $I=\mathbb{N}.$

 By the spectral theorem, every commuting self-adjoint $n$-tuple in a von Neumann
algebra can be approximated in the uniform norm by diagonal $n$-tuples. For each $i\in \mathbb{N}$, choose a diagonal $n$-tuple $\delta_i\in (\M z_i)^n$ such that
\begin{equation}\label{estimate 1}
\|\alpha z_i-\delta_i\|_{\M}\le \frac{\e}{2^i(1+\tau(z_i))},\quad \|\delta_i\|_{\M}\le \|\alpha\|_{\M}.
\end{equation}

Let $\delta\in \M^n$ be defined by the formula
$$\delta(j)=\sum_{i=1}^\infty \delta_i(j),\quad 1\le j\le n.$$
Here, the series converges in the strong operator topology since $\{z_i\}_{i\in \mathbb{N}}$ are pairwise orthogonal and $\{\delta_i(j)\}_{i\in \mathbb{N}}$ are uniformly bounded for each $1\le j\le n$. 

For $1\le j\le n,$
$$\alpha(j)-\delta(j)=\sum_{i=1}^\infty (\alpha(j)z_i-\delta_i(j))$$
where the series converges absolutely in the uniform norm $\|\cdot\|_{\M}$ due to \eqref{estimate 1}.

For every $i\in \mathbb{N}$ and $1\le j\le n,$ we have 
\begin{multline*}
\|\alpha (j)z_i-\delta_i(j)\|_{(L_1\cap L_\infty)(\M)}\leq \|\alpha(j)z_i-\delta_i(j)\|_{L_1(\M)}+\|\alpha(j)z_i-\delta_i(j)\|_{\M}\\
\le \|\alpha(j) z_i-\delta_i(j)\|_{\M}\cdot (1+\tau(z_i))\leq\frac{\varepsilon}{2^i}.
\end{multline*}
Thus, we have
$$\|\alpha-\delta\|_{\varPhi(\M)}\le \sum_{i=1}^\infty\|\alpha z_i-\delta_i\|_{\varPhi(\M)}\leq \sum_{i=1}^\infty\|\alpha z_i-\delta_i\|_{(L_1\cap L_{\infty})(\M)}\leq\varepsilon.$$
For $i\ge1$, we have $\alpha z_i-\delta_i\in (\mathcal{F}(\M,\tau))^n$ since $\tau(z_i)<\infty$. Thus, $\alpha-\delta\in \varPhi^0(\M)$.
\end{proof}

In the remaining part, we deal with the case when $\M$ is properly infinite and semifinite. We need a $\ast$-monomorphism $\psi:C^\ast(\alpha)\to\M$ from Construction \ref{section2 main construction}. The following lemma shows that for such $\psi$, the $n$-tuple $\psi(\alpha)$ can be approximated by diagonal $n$-tuples in the sense of the $\|\cdot\|_{\varPhi(\M)}$-norm, where $\varPhi$ is any given $n$-tuple of symmetric function spaces on $(0,\infty)$.

\begin{lemma}\label{psialpha approximation lemma} Suppose we are in the setting of Construction \ref{section2 main construction}. Assume that $\M$ is equipped with a faithful normal semifinite trace $\tau$. Let $\varPhi=(E_1,\ldots,E_n)$ be an $n$-tuple of symmetric function spaces on $(0,\infty)$. For every $\varepsilon>0,$ there exists a diagonal $n$-tuple $\delta\in\mathcal{M}^n$ such that $\psi(\alpha)-\delta\in \varPhi^0(\M)$ and $\|\psi(\alpha)-\delta\|_{\varPhi(\mathcal{M})}\leq\varepsilon.$
\end{lemma}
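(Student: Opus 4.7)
The proof hinges on a block decomposition. The crucial observation, which is precisely the point of the construction in Section \ref{construction section}, is that $\psi_k(\alpha(i)) \in \mathcal{Z}(\M)$ for every $i$ and $k$. Consequently, for any projection $q \le p_k$ one has $\psi(\alpha(i)) q = \psi_k(\alpha(i)) q = q \psi_k(\alpha(i)) = q \psi(\alpha(i))$, so every subprojection of some $p_k$ commutes with the entire $n$-tuple $\psi(\alpha)$. This is exactly what allows us to refine the partition $\{p_k\}$ into $\tau$-finite pieces without losing the commutation needed for a block-diagonal decomposition.

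First I will use the fact that each reduced algebra $p_k \M p_k$ is $\sigma$-finite and that $\tau$ restricted to $p_k \M p_k$ is semifinite to produce, via a standard Zorn's lemma argument, a countable family $\{p_{k,s}\}_{s \in \mathbb{N}}$ of pairwise orthogonal $\tau$-finite projections in $p_k \M p_k$ with $\sum_s p_{k,s} = p_k$. Combining over $k$ gives a pairwise orthogonal family $\{p_{k,s}\}_{k,s \in \mathbb{N}} \subset \mathcal{P}_f(\M,\tau)$ with $\sum p_{k,s} = \mathbf{1}$, each of which commutes with $\psi(\alpha)$ by the key observation. Next, since each $p_{k,s}$ is $\tau$-finite, the reduced algebra $p_{k,s} \M p_{k,s}$ is a finite von Neumann algebra and $(\psi(\alpha(i)) p_{k,s})_{i=1}^n$ is a commuting self-adjoint $n$-tuple there. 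The standard spectral theorem then produces a diagonal $n$-tuple $\delta_{k,s} \in (p_{k,s} \M p_{k,s})^n$ with
$$\|\psi(\alpha) p_{k,s} - \delta_{k,s}\|_{\M} \le \frac{\varepsilon}{2^{k+s}(1 + \tau(p_{k,s}))}.$$
Setting $\delta(i) = \sum_{k,s} \delta_{k,s}(i)$ (strong operator convergence) yields a diagonal $n$-tuple in $\M^n$, since the spectral projections of $\delta_{k,s}(i)$ across $(k,s)$ form a pairwise orthogonal family summing to $\mathbf{1}$, and the different components commute because they are jointly diagonalized on each block.

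For the norm estimate, I reduce to $\|\cdot\|_{L_1 \cap L_{\infty}}$ via Lemma \ref{intermidiate lem}, as in Lemma \ref{approximation in finite case}. On each block the ``$(1+\tau(\cdot))$-trick'' gives $\|\psi(\alpha(i)) p_{k,s} - \delta_{k,s}(i)\|_{(L_1 \cap L_{\infty})(\M)} \le (1+\tau(p_{k,s}))\|\psi(\alpha) p_{k,s} - \delta_{k,s}\|_{\M} \le \varepsilon/2^{k+s}$, so the triangle inequality yields $\|\psi(\alpha) - \delta\|_{\varPhi(\M)} \le \varepsilon$. Finally, each finite partial sum $\sum_{(k,s)\in F}(\psi(\alpha) p_{k,s} - \delta_{k,s})$ is $\tau$-finitely supported, and these partial sums converge to $\psi(\alpha)-\delta$ in each $E_j(\M)$-norm by the tail estimate, so $\psi(\alpha)-\delta \in \overline{\mathcal{F}(\M,\tau)}^{E_j(\M)} = E_j^0(\M)$ for every $j$, placing the difference in $\varPhi^0(\M)$. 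The main obstacle to recognise is that, unlike the finite case of Lemma \ref{approximation in finite case} which uses a \emph{central} $\tau$-finite partition (impossible when $\M$ is properly infinite), here we must use a non-central partition and crucially rely on the centrality of the range of each $\psi_k$ to recover the commutation that underlies the block decomposition.
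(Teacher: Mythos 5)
Your proof is correct and follows essentially the same route as the paper: refine $\{p_k\}$ into a $\tau$-finite partition, approximate $\psi(\alpha)$ block-by-block in the uniform norm with block-dependent accuracy, and sum the errors in $(L_1\cap L_\infty)(\M)\hookrightarrow E_j(\M)$, obtaining membership in $\varPhi^0(\M)$ from the $E_j(\M)$-convergence of the $\tau$-finitely supported partial sums. The only cosmetic difference is that the paper builds the block diagonals by applying an explicit step function $f_{k,m}$ directly to the central operators $\psi_k(\alpha(j))$ (normalizing $\tau(q_{k,m})=1$), whereas you pass to the corner $p_{k,s}\M p_{k,s}$ and use the generic spectral-theorem diagonal approximation together with the $(1+\tau(p_{k,s}))$-trick -- both hinge on the same centrality of $\psi_k(\alpha(j))$, which you correctly identify as the reason each subprojection of $p_k$ commutes with $\psi(\alpha)$.
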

\begin{proof} Without loss of generality, assume that $\|\chi_{(0,1)}\|_{E_j}=1$ for every $1\leq j\leq n.$

Let $\{p_k\}_{k\in \mathbb{N}}$ be the sequence of projections as in Construction \ref{section2 main construction}. For every $p_k,$ since $\M$ is $\sigma$-finite and semifinite, there is a sequence of non-zero pairwise orthogonal $\tau$-finite projections $\{q_{k,m}\}_{m\in \mathbb{N}}$ in $\M$ such that 
$$p_k = \sum_{m\in \mathbb{N}} q_{k,m}.$$
Without loss of generality, $\tau(q_{k,m})=1$ for every $k,m\ge1.$ Thus, $\|q_{k,m}\|_{E_j(\M)}=1$ for every $k,m\geq1$ and every $1\leq j\leq n.$ 

Fix $\e>0$ and fix $l\in\mathbb{N}$ such that $2^{-l}\le\varepsilon$. For $k,m\ge1$, let
$$f_{k,m}(x)=2^{-k-m-l}\lfloor 2^{k+m+l}x\rfloor,\quad x\in \mathbb{R}.$$
Here, $\lfloor x\rfloor$ denotes the integer part for $x\in \mathbb{R}$. 
Clearly, 
\begin{equation}\label{step function}
|f_{k,m}(x)-x|\le 2^{-k-m-l},\quad x\in \mathbb{R}.
\end{equation}
Set
$$\delta(j)=\sum_{k,m\in\mathbb{N}}f_{k,m}(\psi_k(\alpha(j))) q_{k,m},\quad 1\le j\le n.$$

For $k,m\ge1$, since the range of the function $f_{k,m}$ is countable, the operator $f_{k,m}(\psi_k(\alpha(j)))$ is diagonal for each $1\le j\le n.$ Since $\psi_k(\alpha(j))\in \mathcal{Z}(\M)$, it follows that 
$$f_{k,m}(\psi_k(\alpha(j)))\in \mathcal{Z}(\M),\quad 1\le j\le n.$$ 
Thus, $\delta$ is a commuting $n$-tuple. Note that $\{q_{k,m}\}_{k,m\in \mathbb{N}}$ are pairwise orthogonal. Hence, for every $1\le j\le n,$  $$\{f_{k,m}(\psi_k(\alpha(j)))q_{k,m}\}_{k,m\in \mathbb{N}}$$
is a sequence of pairwise orthogonal diagonal operators. Thus, $\delta(j)$ is diagonal for each $1\le j\le n$ and, finally, $\delta$ is a commuting diagonal $n$-tuple.

For $1\le j\le n,$
$$\psi(\alpha(j))\overset{{\rm Construction}\ \ref{section2 main construction}}{=}\sum_{k\in\mathbb{N}}\psi_k(\alpha(j))p_k=\sum_{k,m\in\mathbb{N}}\psi_k(\alpha(j))q_{k,m}.$$
Thus,
\begin{equation}\label{psi minus delta}
\psi(\alpha(j))-\delta(j)=\sum_{k,m\in\mathbb{N}} (\psi_k(\alpha(j))-f_{k,m}(\psi_k(\alpha(j)))) q_{k,m}.
\end{equation}
For $k,m\in \mathbb{N}$, $(\psi_k(\alpha(j))-f_{k,m}(\psi_k(\alpha(j)))) q_{k,m}\in \mathcal{F}(\M,\tau)$ since $\tau(q_{k,m})<\infty$. We now show that the series on the right-hand side of \eqref{psi minus delta} converges absolutely in $E_j(\M)$ and thus $\psi(\alpha(j))-\delta(j)\in E_j^0(\M)$. By the triangle inequality,
\begin{multline*}
\|\psi(\alpha(j))-\delta(j)\|_{E_j(\mathcal{M})}\le\sum_{k,m\in \mathbb{N}}\|(\psi_k(\alpha(j))-f_{k,m}(\psi_k(\alpha(j))))q_{k,m}\|_{E_j(\M)}\\
\le \sum_{k,m\in \mathbb{N}} \|\psi_k(\alpha(j))-f_{k,m}(\psi_k(\alpha(j)))\|_{\M}\|q_{k,m}\|_{E_j(\M)}\\
\overset{\eqref{step function}}{\le}\sum_{k,m\in\mathbb{N}}2^{-k-m-l}\|q_{k,m}\|_{E_j(\mathcal{M})}= \sum_{k,m\in\mathbb{N}}2^{-k-m-l}=2^{-l}\le\e.
\end{multline*}
Thus, $\psi(\alpha)-\delta\in \varPhi^0(\M)$ and $\|\psi(\alpha)-\delta\|_{\varPhi(\M)}\le \e$. 
\end{proof}

\begin{lemma}\label{verification lemma} Let $\M$ be a properly infinite $\sigma$-finite von Neumann algebra with a faithful normal semifinite trace $\tau$. Let $\alpha\in\M^n$ be a commuting self-adjoint $n$-tuple. Let $\varPhi=(E_1,\ldots,E_n)$ be an $n$-tuple of symmetric function spaces on $(0,\infty)$. Let a $C^\ast$-algebra $\mathcal{A}$ and $\psi\in {\rm Hom}(\mathcal{A},\M)$ be defined as in Construction \ref{section2 main construction}. Suppose also that $WZ^\ast(\alpha)\cap \mathcal{P}_f(\M)=\{0\}$ and $k_{\varPhi(\M)}(\alpha)=0.$ The triple $(\M,\alpha,\psi)$ satisfies the assumptions in Theorem \ref{key thm}.
\end{lemma}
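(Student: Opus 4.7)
The plan is to match the three hypotheses of Theorem \ref{key thm}, namely
\begin{enumerate}[\rm (i)]
\item $WZ^\ast(\alpha)\cap\mathcal{P}_f(\M)=WZ^\ast(\psi(\alpha))\cap\mathcal{P}_f(\M)=\{0\}$,
\item $\psi\sim_\M\mathrm{Id}_{C^\ast(\alpha)}$,
\item $k_{\varPhi(\M)}(\alpha)=k_{\varPhi(\M)}(\psi(\alpha))=0$,
\end{enumerate}
against what we are given. The left halves of (i) and (iii) are among the hypotheses of the lemma, and (ii) is exactly Theorem \ref{approximation by inner hom}\eqref{psi point 3} applied to our $\psi$. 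So the whole task reduces to verifying the right halves of (i) and (iii).

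For the right half of (i), I will exploit the block structure of $\psi$. By Construction \ref{section2 main construction}, the identity decomposes as $\mathbf{1}=\sum_{k\in\mathbb{N}}p_k$ with pairwise orthogonal $p_k\sim\mathbf{1}$, and
$$\psi(\alpha(i))=\sum_{k\in\mathbb{N}}\psi_k(\alpha(i))\,p_k,\qquad \psi_k(\alpha(i))\in\mathcal{Z}(\M).$$
Each $p_k$ is properly infinite (being equivalent to the properly infinite $\mathbf{1}$), so Lemma \ref{properly infinite sequence lemma} gives $WZ^\ast(\{p_k\}_{k\in\mathbb{N}})\cap\mathcal{P}_f(\M)=\{0\}$. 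Since the central coefficients $\psi_k(\alpha(i))$ and the projections $p_k$ all lie in $WZ^\ast(\{p_k\}_{k\in\mathbb{N}})$, the displayed decomposition shows $\psi(\alpha(i))\in WZ^\ast(\{p_k\}_{k\in\mathbb{N}})$, hence $WZ^\ast(\psi(\alpha))\subset WZ^\ast(\{p_k\}_{k\in\mathbb{N}})$, and the right half of (i) follows.

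For the right half of (iii), I will combine Lemma \ref{psialpha approximation lemma} with Lemma \ref{diagonal has zero k}. Fix $a\in\mathcal{F}_1^+(\M)$ and $\epsilon>0$. Using Lemma \ref{psialpha approximation lemma}, choose a diagonal $\delta\in\M^n$ with $\|\psi(\alpha)-\delta\|_{\varPhi(\M)}\leq\epsilon$. By Lemma \ref{diagonal has zero k}, $k_{\varPhi(\M)}(\delta)=0$, so there exists $r\in\mathcal{F}_1^+(\M)$ with $r\geq a$ and $\|[r,\delta]\|_{\varPhi(\M)}\leq\epsilon$. Since $0\leq r\leq\mathbf{1}$, for each coordinate $j$ the standard estimate $\mu(ry)\leq\|r\|_{\M}\,\mu(y)$ (and its right analogue) gives $\|ry-yr\|_{E_j(\M)}\leq 2\|y\|_{E_j(\M)}$ for any $y\in E_j(\M)$; applied to $y=\psi(\alpha(j))-\delta(j)$ and combined with the triangle inequality, this yields
$$\|[r,\psi(\alpha)]\|_{\varPhi(\M)}\leq\|[r,\delta]\|_{\varPhi(\M)}+2\|\psi(\alpha)-\delta\|_{\varPhi(\M)}\leq 3\epsilon.$$
Since $\epsilon>0$ was arbitrary, $\inf_{r\geq a}\|[r,\psi(\alpha)]\|_{\varPhi(\M)}=0$ for every $a\in\mathcal{F}_1^+(\M)$, which is the vanishing of $k_{\varPhi(\M)}(\psi(\alpha))$.

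The main obstacle here is essentially nil: all of the technical work has already been done in Construction \ref{section2 main construction}, Theorem \ref{approximation by inner hom}, Lemma \ref{psialpha approximation lemma}, Lemma \ref{properly infinite sequence lemma} and Lemma \ref{diagonal has zero k}. The present lemma is a clean bookkeeping step that packages those results into the exact form required by Theorem \ref{key thm}; the only tiny point to keep track of is that the approximation in (iii) supplies \emph{some} $\delta$, after which a single commutator-bounded-by-operator-norm inequality transfers vanishing of $k_{\varPhi(\M)}$ from $\delta$ to $\psi(\alpha)$.
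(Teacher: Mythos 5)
Your proof is correct, and for hypotheses (i) and (ii) of Theorem \ref{key thm} it is essentially the paper's argument: the inclusion $WZ^\ast(\psi(\alpha))\subset WZ^\ast(\{p_k\}_{k\in\mathbb{N}})$ combined with Lemma \ref{properly infinite sequence lemma}, and Theorem \ref{approximation by inner hom} \eqref{psi point 3} for the approximate equivalence. Where you genuinely diverge is the verification of $k_{\varPhi(\M)}(\psi(\alpha))=0$: the paper deduces it by citing \cite[Proposition 3.1]{BSZZ2023}, i.e.\ the invariance of the quasicentral modulus under perturbations belonging to $\varPhi^0(\M)$, applied to the diagonal $\delta$ from Lemma \ref{psialpha approximation lemma}, and then Lemma \ref{diagonal has zero k}; you instead run a direct $3\varepsilon$-argument, writing $[r,\psi(\alpha(j))]=[r,\delta(j)]+[r,\psi(\alpha(j))-\delta(j)]$ and using the bimodule estimate $\|[r,y]\|_{E_j(\M)}\le 2\|r\|_{\M}\|y\|_{E_j(\M)}$ together with the quantitative bound $\|\psi(\alpha)-\delta\|_{\varPhi(\M)}\le\varepsilon$. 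Your route is more self-contained (it needs only the norm estimate, not membership of $\psi(\alpha)-\delta$ in $\varPhi^0(\M)$, and avoids the external reference), at the cost of an explicit epsilon chase; the paper's route is shorter but imports the perturbation-invariance result. One small omission: the hypotheses of Theorem \ref{key thm} also require $\psi|_{C^\ast(\alpha)}$ to be a $\ast$-monomorphism, which is not among the three items you check; add one line citing Theorem \ref{approximation by inner hom} \eqref{psi point 2} (equivalently, Lemma \ref{faithful lemma}) for the faithfulness of $\psi$, exactly as the paper does.
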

\begin{proof} Recall that in Construction \ref{section2 main construction}, we let $\{p_k\}_{k\ge1}$ be a sequence of pairwise orthogonal projections such that $p_k\sim {\bf 1}$ for every $k\ge1$, let $\{\psi_k\}_{k\ge1}$ be a separating family in ${\rm Hom}(\mathcal{A},\mathcal{Z}(\M))$, and that $\psi$ is defined by setting
\begin{equation}\label{construction psi}
\psi(a)=\sum_{k\ge1}\psi_k(a)p_k,\quad a\in \mathcal{A}.
\end{equation}
By Theorem \ref{approximation by inner hom} \eqref{psi point 2}, $\psi$ is a faithful $\ast$-homomorphism on $\mathcal{A}$. This implies that $\psi$ is an isometry on $\mathcal{A}$ (see e.g. \cite[Corollary I.5.4]{T1}) and so $\psi(C^\ast(\alpha))=C^\ast(\psi(\alpha)).$ 

From Theorem \ref{approximation by inner hom} \eqref{psi point 3}, we have
$$\psi|_{C^\ast(\alpha)}\sim_{\M}{\rm Id}_{C^{\ast}(\alpha)}.$$
For $a\in\alpha$, since $\psi_k(a)\in \mathcal{Z}(\M)$ for each $k\ge1$, it follows from \eqref{construction psi} that $\psi(a)\in WZ^\ast(\{p_k\}_{k\in \mathbb{N}}).$ Hence, $WZ^\ast(\psi(\alpha))\subset WZ^\ast(\{p_k\}_{k\in \mathbb{N}})$. Since each $p_k\sim \mathbf{1},$ $k\ge1,$ is properly infinite, it follows that
$$WZ^\ast(\psi(\alpha))\cap \mathcal{P}_f(\M)\subset WZ^\ast(\{p_k\}_{k\in \mathbb{N}})\cap \mathcal{P}_f(\M)\stackrel{L.\ref{properly infinite sequence lemma}}{=}\{0\}.$$

By Lemma \ref{psialpha approximation lemma}, there exists a diagonal $n$-tuple $\delta\in \M^n$ such that $\psi(\alpha)-\delta\in\varPhi^0(\M)$. This implies that $k_{\varPhi(\M)}(\psi(\alpha))=k_{\varPhi(\M)}(\delta)$ (see \cite[Proposition 3.1]{BSZZ2023}). Since $\delta\in\M^n$ is a diagonal $n$-tuple, it follows from Lemma \ref{diagonal has zero k} that $k_{\varPhi(\M)}(\delta)=0.$ Hence, $k_{\varPhi(\M)}(\psi(\alpha))=0.$ This completes the proof.
\end{proof}

\begin{lemma}\label{extreme non-abelian case}  Let $\M$ be a properly infinite $\sigma$-finite von Neumann algebra with a faithful normal semifinite trace $\tau$. Suppose $\varPhi=(E_1,\ldots,E_n)$ is an $n$-tuple of symmetric function spaces on $(0,\infty)$.  Let $\alpha\in\M^n$ be a commuting self-adjoint $n$-tuple. Suppose also that $WZ^\ast(\alpha)\cap \mathcal{P}_f(\M)=\{0\}$ and $k_{\varPhi(\M)}(\alpha)=0.$ Let a $C^\ast$-algebra $\mathcal{A}$ and $\psi\in {\rm Hom}(\mathcal{A},\M)$ be defined as in Construction \ref{section2 main construction}. For every $\e>0,$ there exists a unitary $u\in\M$ such that $u\psi(\alpha)u^{-1}-\alpha\in \varPhi^0(\M)$ and
$$\|u\psi(\alpha)u^{-1}-\alpha\|_{\varPhi(\M)}<\varepsilon.$$
\end{lemma}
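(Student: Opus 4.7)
The proof is essentially a direct application of the machinery already developed. The plan is to first invoke Lemma \ref{verification lemma} to confirm that the triple $(\M,\alpha,\psi)$ satisfies all three hypotheses of Theorem \ref{key thm}. Indeed, the assumption $WZ^{\ast}(\alpha)\cap\mathcal{P}_f(\M)=\{0\}$ is given; the assumption $k_{\varPhi(\M)}(\alpha)=0$ is given; and Lemma \ref{verification lemma} supplies the remaining conditions, namely that $WZ^{\ast}(\psi(\alpha))\cap\mathcal{P}_f(\M)=\{0\}$, that $\psi|_{C^{\ast}(\alpha)}\sim_{\M}\mathrm{Id}_{C^{\ast}(\alpha)}$, and that $k_{\varPhi(\M)}(\psi(\alpha))=0$.

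Having verified the hypotheses, I would apply Theorem \ref{key thm} with tolerance $\varepsilon/2$ (to obtain a strict inequality at the end) to extract a unitary $u\in\M$ such that $u\psi(\alpha)-\alpha u\in\varPhi^0(\M)$ and
$$\|u\psi(\alpha)-\alpha u\|_{\varPhi(\M)}\le\frac{\varepsilon}{2}.$$

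The final step is the algebraic identity
$$u\psi(\alpha(j))u^{-1}-\alpha(j)=\bigl(u\psi(\alpha(j))-\alpha(j)u\bigr)u^{-1},\quad 1\le j\le n,$$
combined with the unitary invariance of singular value functions: for $x\in S(\tau)$, $\mu(xu^{-1})=\mu(x)$, so $\|xu^{-1}\|_{E_j(\M)}=\|x\|_{E_j(\M)}$, and right multiplication by $u^{-1}$ maps $E_j^0(\M)$ into itself. Consequently, $u\psi(\alpha)u^{-1}-\alpha\in\varPhi^0(\M)$ and
$$\|u\psi(\alpha)u^{-1}-\alpha\|_{\varPhi(\M)}=\|u\psi(\alpha)-\alpha u\|_{\varPhi(\M)}\le\frac{\varepsilon}{2}<\varepsilon,$$
which yields the required unitary. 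There is no genuine obstacle here, since all technical difficulties have been absorbed into Theorem \ref{key thm} and the construction-specific verifications of Lemma \ref{verification lemma}; the proof is essentially a one-line reformulation of Theorem \ref{key thm} using that conjugation by a unitary preserves the symmetric space norms.
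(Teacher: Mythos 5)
Your proposal is correct and follows exactly the paper's route: the paper's proof is the one-line observation that the assertion follows from Theorem \ref{key thm} combined with Lemma \ref{verification lemma}, with the passage from $u\psi(\alpha)-\alpha u$ to $u\psi(\alpha)u^{-1}-\alpha$ via unitary invariance left implicit. Your explicit handling of that last step (and of the strict inequality via $\varepsilon/2$) is a fine, if unstated in the paper, bit of bookkeeping.
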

\begin{proof} The assertion follows now from Theorem \ref{key thm} and Lemma \ref{verification lemma}.
\end{proof}

Suppose $\mathcal{N}$ is a von Neumann subalgebra of $\M$ and suppose $\beta\subset \mathcal{N}$ is a subset. We denote by $WZ_{\mathcal{N}}^\ast(\beta)$ the von Neumann subalgebra in $\mathcal{N}$ generated by $\mathcal{Z}(\mathcal{N})\cup\beta$.

\begin{lemma}\label{fedor request lemma} Let $\M$ be a properly infinite von Neumann algebra with a faithful normal semifinite trace $\tau$ and let $\alpha\in\M^n$ be a commuting self-adjoint $n$-tuple. Set
$$p=\bigvee\{x:x\in WZ^\ast(\alpha)\cap \mathcal{P}_f(\M)\},$$
$$\mathcal{W}=pWZ^\ast(\alpha),\quad \mathcal{N}=(\mathbf{1}-p)\M(\mathbf{1}-p),\quad \beta=(\mathbf{1}-p)\alpha.$$
We have
\begin{enumerate}[{\rm (i)}]
\item $p$ commutes with $\alpha;$
\item $\tau|_{\mathcal{W}}$ is semifinite;
\item $\mathcal{N}$ is properly infinite;
\item $WZ_{\mathcal{N}}^\ast(\beta)\cap \mathcal{P}_f(\mathcal{N})=\{0\}.$
\end{enumerate}
\end{lemma}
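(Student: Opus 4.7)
The plan is to handle the four assertions in order, leveraging the abelian structure of $WZ^\ast(\alpha)$ and the defining maximality of $p$.

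For (i), the commuting $n$-tuple $\alpha$ makes $W^\ast(\alpha)$ abelian, and adjoining the centre $\mathcal{Z}(\M)$ preserves commutativity, so $WZ^\ast(\alpha)$ is abelian. Since any von Neumann algebra is closed under suprema of projections, one has $p\in WZ^\ast(\alpha)$, and the commutativity then gives $[p,\alpha]=0$ at once.

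For (ii), write $S=WZ^\ast(\alpha)\cap\mathcal{P}_f(\M)$ so that $p=\bigvee S$. Given a non-zero projection $q\in\mathcal{W}=pWZ^\ast(\alpha)$, one has $q\le p$, and distributivity of multiplication over suprema of projections in the abelian algebra $WZ^\ast(\alpha)$ yields
$$q=pq=\Bigl(\bigvee_{s\in S}s\Bigr)q=\bigvee_{s\in S}(sq).$$
Hence some $sq$ is non-zero and, as a subprojection of $s\in S$, finite in $\M$. Lemma \ref{finite and tau finite}(i) then furnishes a central projection $z\in\mathcal{Z}(\M)\subset WZ^\ast(\alpha)$ with $zsq$ non-zero and $\tau$-finite; since $zsq\in WZ^\ast(\alpha)$ and $zsq\le p$, it lies in $\mathcal{W}$, proving semifiniteness.

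For (iii), I would first verify that $\mathbf{1}-p$ is a properly infinite projection in $\M$: if some $z\in\mathcal{Z}(\M)$ made $z(\mathbf{1}-p)$ non-zero and finite, then $z(\mathbf{1}-p)\in S$ would force $z(\mathbf{1}-p)\le p$, which combined with $z(\mathbf{1}-p)\le\mathbf{1}-p$ gives the contradiction $z(\mathbf{1}-p)=0$. The standard equivalence between proper infiniteness of a projection in $\M$ and proper infiniteness of the corresponding reduced von Neumann algebra (see \cite[\S V.1]{T1}) then yields (iii).

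For (iv), the main technical input is the standard identification $\mathcal{Z}(\mathcal{N})=(\mathbf{1}-p)\mathcal{Z}(\M)$ for the centre of a reduced algebra at a projection; this is the step that most deserves care and is the principal obstacle in the argument. Granted this, $\mathbf{1}-p\in WZ^\ast(\alpha)$ and $\mathcal{Z}(\M)\subset WZ^\ast(\alpha)$ place $\mathcal{Z}(\mathcal{N})$ inside $WZ^\ast(\alpha)$, and together with $\beta=(\mathbf{1}-p)\alpha\subset WZ^\ast(\alpha)$ and the weak closure of $WZ^\ast(\alpha)$ this gives $WZ_{\mathcal{N}}^\ast(\beta)\subset WZ^\ast(\alpha)$. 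As finite projections in $\mathcal{N}$ are finite in $\M$, any $q\in WZ_{\mathcal{N}}^\ast(\beta)\cap\mathcal{P}_f(\mathcal{N})$ lies in $S$ and so satisfies $q\le p$; combined with $q\le\mathbf{1}-p$, this forces $q=0$.
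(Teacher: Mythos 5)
Your proof is correct and follows essentially the same route as the paper's: (iii) is verbatim the paper's argument, (ii) fleshes out what the paper dismisses as an immediate consequence of Lemma \ref{finite and tau finite}(i), and (iv) uses the inclusion $WZ_{\mathcal{N}}^\ast(\beta)\subset(\mathbf{1}-p)WZ^\ast(\alpha)$ where the paper records the equality, with the same final clash $q\le p\wedge(\mathbf{1}-p)=0$. The only cosmetic difference is that in (ii) you invoke complete distributivity of the Boolean projection lattice, where it suffices to note that $qs=0$ for all finite $s\in WZ^\ast(\alpha)$ would force $q\le\mathbf{1}-p$; either way the argument is sound.
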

\begin{proof} The first assertion is obvious. The second assertion follows immediately from Lemma \ref{finite and tau finite} \eqref{ftfa}.

Suppose $z\in \mathcal{Z}(\M)$ is a projection such that $(\mathbf{1}-p)z$ is finite and non-zero. Clearly, $(\mathbf{1}-p),z\in WZ^\ast(\alpha).$ Thus, $(\mathbf{1}-p)z\in WZ^\ast(\alpha).$ Since $(\mathbf{1}-p)z$ is finite, then, by the definition of $p,$ we have $(\mathbf{1}-p)z\leq p.$ However, $(\mathbf{1}-p)z\leq \mathbf{1}-p.$ Thus, $(\mathbf{1}-p)z\leq p\wedge (\mathbf{1}-p)=0.$

By the preceding paragraph, for every projection $z\in \mathcal{Z}(\M),$ the projection $(\mathbf{1}-p)z$ is either zero or infinite. Hence, the projection $\mathbf{1}-p$ is properly infinite. In other words, $\mathcal{N}$ is properly infinite, which proves the third assertion.

It is immediate that
$$WZ_{\mathcal{N}}^\ast(\beta)=(\mathbf{1}-p)WZ^{\ast}(\alpha)=\{x\in WZ^{\ast}(\alpha):\ px=0\}.$$
Thus,
\begin{multline*}
WZ_{\mathcal{N}}^\ast(\beta)\cap \mathcal{P}_f(\mathcal{N},\tau)\\
=\{x\in WZ^{\ast}(\alpha):\ px=0\}\cap \{x\in \mathcal{P}_f(\mathcal{M}):\ px=xp=0\}\\
=\{x\in WZ^{\ast}(\alpha)\cap \mathcal{P}_f(\mathcal{M}):\ px=0\}.
\end{multline*}
However, if $x\in WZ^{\ast}(\alpha)\cap \mathcal{P}_f(\mathcal{M}),$ then, by the definition of $p,$ we have $x=px.$ If also $px=0,$ then inevitably $x=0.$ This proves the fourth assertion.
\end{proof}

The following lemma yields the implication \eqref{mta}$\to$\eqref{mtb} of Theorem \ref{main theorem} provided that $\M$ is properly infinite.

\begin{lemma}\label{pre-final lemma} Suppose we are in the setting of Theorem \ref{main theorem}. Suppose also that $\M$ is properly infinite. If $k_{\varPhi(\M)}(\alpha)=0,$ then, for every $\varepsilon>0,$ there exists a diagonal $n$-tuple $\delta\in\M^n$ such that
$$\alpha-\delta\in\varPhi^0(\M),\quad \|\alpha-\delta\|_{\varPhi(\M)}<\varepsilon.$$
\end{lemma}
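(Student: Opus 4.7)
The plan is to decompose $\alpha$ via Lemma \ref{fedor request lemma} into a ``potentially-$\tau$-finite'' piece supported on $p$ and a ``Voiculescu-type'' piece supported on $\mathbf{1}-p$, and then approximate each piece with the appropriate tool. Set
$$p=\bigvee\{x:x\in WZ^\ast(\alpha)\cap\mathcal{P}_f(\M)\},\quad \mathcal{W}=pWZ^\ast(\alpha),\quad \mathcal{N}=(\mathbf{1}-p)\M(\mathbf{1}-p),\quad \beta=(\mathbf{1}-p)\alpha,$$
so that by Lemma \ref{fedor request lemma}, $p$ commutes with $\alpha$, $\tau|_{\mathcal{W}}$ is semifinite, $\mathcal{N}$ is properly infinite (and $\sigma$-finite as a corner of $\M$), and $WZ_{\mathcal{N}}^\ast(\beta)\cap\mathcal{P}_f(\mathcal{N})=\{0\}$. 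For the $\mathcal{N}$-piece, I would first observe $k_{\varPhi(\mathcal{N})}(\beta)=0$ by Lemma \ref{qm vanishes on subspace} (since $\mathbf{1}-p$ commutes with $\alpha$), then apply Construction \ref{section2 main construction} inside $\mathcal{N}$ to produce a $\ast$-monomorphism $\psi:C^\ast(\beta)\to\mathcal{N}$; by Lemma \ref{verification lemma}, the triple $(\mathcal{N},\beta,\psi)$ satisfies all hypotheses of Theorem \ref{key thm}. Chaining Lemma \ref{extreme non-abelian case} (yielding a unitary $u\in\mathcal{N}$ with $u\psi(\beta)u^{-1}-\beta\in\varPhi^0(\mathcal{N})$ of $\varPhi(\mathcal{N})$-norm $<\varepsilon/4$) with Lemma \ref{psialpha approximation lemma} (yielding a diagonal $n$-tuple $\delta_1\in\mathcal{N}^n$ with $\psi(\beta)-\delta_1\in\varPhi^0(\mathcal{N})$ of $\varPhi(\mathcal{N})$-norm $<\varepsilon/4$), the $n$-tuple $\delta_{\mathcal{N}}:=u\delta_1 u^{-1}$ is diagonal in $\mathcal{N}$ and satisfies $\beta-\delta_{\mathcal{N}}\in\varPhi^0(\mathcal{N})\subset\varPhi^0(\M)$ with $\|\beta-\delta_{\mathcal{N}}\|_{\varPhi(\M)}<\varepsilon/2$.

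For the $p$-piece, semifiniteness of $\tau|_{\mathcal{W}}$ together with the $\sigma$-finiteness of $\M$ allows us to write $p=\sum_{i\in\mathbb{N}}p_i$ as a pairwise orthogonal sum of non-zero $\tau$-finite projections in $\mathcal{W}$, using a maximal-family argument exactly as in the proof of Lemma \ref{finite and tau finite}\eqref{ftfb}. Since $WZ^\ast(\alpha)$ is commutative, each $p_i$ commutes with $\alpha$; since $p_i$ is $\tau$-finite it is finite in $\M$, hence $p_i\M p_i$ is a finite $\sigma$-finite von Neumann algebra and $p_i\alpha\in(p_i\M p_i)^n$ is a commuting self-adjoint $n$-tuple. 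Apply Lemma \ref{approximation in finite case} inside each $p_i\M p_i$ with tolerance $\varepsilon\cdot 2^{-i-1}$ to obtain a diagonal $n$-tuple $\delta_{2,i}\in(p_i\M p_i)^n$ with $p_i\alpha-\delta_{2,i}\in\varPhi^0(p_i\M p_i)$ and $\|p_i\alpha-\delta_{2,i}\|_{\varPhi(\M)}<\varepsilon\cdot 2^{-i-1}$; inspecting the proof of that lemma, we may further take $\|\delta_{2,i}\|_{\M}\le\|\alpha\|_{\M}$. Then $\delta_p:=\sum_i\delta_{2,i}$ converges strongly to a diagonal $n$-tuple in $p\M p$ (the summands are diagonal, pairwise orthogonal, and uniformly bounded), and $p\alpha-\delta_p=\sum_i(p_i\alpha-\delta_{2,i})$ converges absolutely in $\varPhi(\M)$ with total norm $<\varepsilon/2$. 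As $\varPhi^0(\M)$ is closed in $\varPhi(\M)$, we obtain $p\alpha-\delta_p\in\varPhi^0(\M)$.

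Finally, set $\delta:=\delta_p+\delta_{\mathcal{N}}\in\M^n$. Because the two pieces live in the orthogonal corners $p\M p$ and $(\mathbf{1}-p)\M(\mathbf{1}-p)$, concatenating their diagonalizing projection families shows that $\delta$ is diagonal in $\M^n$, and by construction $\alpha-\delta\in\varPhi^0(\M)$ with $\|\alpha-\delta\|_{\varPhi(\M)}<\varepsilon$. The main subtlety I anticipate is the bookkeeping on the $p$-side: one must verify that the symmetric $\varPhi$-norm of an element supported beneath the $\tau$-finite projection $p_i$ is intrinsic (computed from the same singular-value function whether the ambient algebra is $p_i\M p_i$ or $\M$, which follows from $\mathfrak{l}(x)\le p_i$ making the distribution function identical), and one must extract uniform operator-norm control on the $\delta_{2,i}$ from the proof of Lemma \ref{approximation in finite case} so that the strong-operator sum $\sum_i\delta_{2,i}$ converges to a bounded $n$-tuple in $p\M p$. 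Both points are routine, so the genuine content of the lemma is already encapsulated in the combination of Lemma \ref{fedor request lemma}, Lemma \ref{extreme non-abelian case}, and Lemma \ref{approximation in finite case}.
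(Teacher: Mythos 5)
Your argument is correct, and for the properly infinite corner it coincides with the paper's proof verbatim: the same decomposition via Lemma \ref{fedor request lemma}, then $k_{\varPhi(\mathcal{N})}(\beta)=0$ from Lemma \ref{qm vanishes on subspace}, Construction \ref{section2 main construction}, Lemma \ref{verification lemma}/Lemma \ref{extreme non-abelian case}, Lemma \ref{psialpha approximation lemma}, and conjugation by the unitary. The only place you diverge is the $p$-corner. The paper handles it in one stroke: since $WZ^\ast(\alpha)$ is abelian, $\mathcal{W}=pWZ^\ast(\alpha)$ is a commutative, hence \emph{finite}, $\sigma$-finite von Neumann algebra, and Lemma \ref{fedor request lemma}\,(ii) was set up precisely to guarantee that $\tau|_{\mathcal{W}}$ is semifinite; so Lemma \ref{approximation in finite case} applies verbatim to the pair $(\mathcal{W},p\alpha)$ and delivers the diagonal tuple $\delta_1\in\mathcal{W}^n$ with $p\alpha-\delta_1\in\varPhi^0(\mathcal{W})\subset\varPhi^0(\M)$. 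You instead re-enact the internal mechanism of Lemma \ref{approximation in finite case} by hand: splitting $p$ into $\tau$-finite projections $p_i\in\mathcal{W}$, applying the lemma in each corner $p_i\M p_i$, and reassembling, which forces you to verify the intrinsic-norm identity, extract the uniform bound $\|\delta_{2,i}\|_{\M}\le\|\alpha\|_{\M}$ from the proof of Lemma \ref{approximation in finite case}, and identify the strong limit with the $\varPhi(\M)$-norm limit. All of these checks do go through (the distribution function of an operator supported under $p_i$ is the same in $p_i\M p_i$ and in $\M$, the bound on $\delta_i$ is indeed recorded in \eqref{estimate 1}, and the limit identification is the same standard compatibility the paper itself uses inside Lemma \ref{approximation in finite case}), so there is no gap; the paper's route simply packages this bookkeeping once, inside Lemma \ref{approximation in finite case} applied to the abelian algebra $\mathcal{W}$, rather than redoing it at the level of corners of $\M$.
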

\begin{proof} Let $p,$ $\mathcal{W},$ $\mathcal{N}$ and $\beta$ be as in Lemma \ref{fedor request lemma}.

Applying Lemma \ref{approximation in finite case} to the commutative von Neumann algebra $\mathcal{W},$ to the semifinite trace $\tau|_{\mathcal{W}}$ and to the $n$-tuple $p\alpha,$ we find a diagonal $n$-tuple $\delta_1\in \mathcal{W}^n$ such that
$$p\alpha-\delta_1\in\varPhi^0(\mathcal{W}),\quad \|p\alpha-\delta_1\|_{\varPhi(\mathcal{W})}<\varepsilon.$$

Since $k_{\varPhi(\M)}(\alpha)=0$, by Lemma \ref{qm vanishes on subspace} we have
$$k_{\varPhi(\mathcal{N})}(\beta)=0.$$
By Lemma \ref{fedor request lemma}, $\mathcal{N}$ is properly infinite and  $WZ_{\mathcal{N}}^\ast(\beta)\cap \mathcal{P}_f(\mathcal{N})=\{0\}.$ Let $\psi\in {\rm Hom}(C^{\ast}(\beta),\mathcal{N})$ be defined as in Construction \ref{section2 main construction}. By Lemma \ref{extreme non-abelian case}, there exists a unitary $u\in\mathcal{N}$ such that
$$u\psi(\beta)u^{-1}-\beta\in\varPhi^0(\mathcal{N}),\quad \|u\psi(\beta)u^{-1}-\beta\|_{\varPhi(\mathcal{N})}<\varepsilon.$$
By Lemma \ref{psialpha approximation lemma}, there exists a diagonal $n$-tuple $\delta_2\in\mathcal{N}^n$ such that
$$\psi(\beta)-\delta_2\in\varPhi^0(\mathcal{N}),\quad \|\psi(\beta)-\delta_2\|_{\varPhi(\mathcal{N})}<\e.$$
Set $\delta_3=u\delta_2u^{-1}\in\mathcal{N}^n.$ Since $u$ is unitary, it follows that  $\delta_3$ is diagonal and 
$$u\psi(\beta)u^{-1}-\delta_3\in \varPhi^0(\mathcal{N}),\quad \|u\psi(\beta)u^{-1}-\delta_3\|_{\varPhi(\mathcal{N})}<\e.$$
By the triangle inequality,
$$\beta-\delta_3\in\varPhi^0(\mathcal{N}),\quad \|\beta-\delta_3\|_{\varPhi(\mathcal{N})}<2\varepsilon.$$

Set now $\delta=\delta_1+\delta_3.$ Since $\delta_1\in\mathcal{W}^n$ and $\delta_3\in\mathcal{N}^n$ are mutually orthogonal diagonal $n$-tuples, it follows that $\delta\in\M^n$ is also a diagonal $n$-tuple. By the triangle inequality, we have $\alpha-\delta\in \varPhi^0(\M)$ and
\begin{multline*}
\|\alpha-\delta\|_{\varPhi(\M)}\leq\|p\alpha-\delta_1\|_{\varPhi(\M)}+\|\beta-\delta_3\|_{\varPhi(\M)}\\
=\|p\alpha-\delta_1\|_{\varPhi(\mathcal{W})}+\|\beta-\delta_3\|_{\varPhi(\mathcal{N})}<3\varepsilon.
\end{multline*}
This completes the proof.
\end{proof}

Finally, we are in the position to give the proof of Theorem \ref{main theorem}.

\begin{proof}[Proof of Theorem \ref{main theorem}] The implication \eqref{mtb}$\to$\eqref{mta} is easy. Indeed, since $\alpha-\delta\in\varPhi^0(\M)$, it follows from \cite[Proposition 3.1]{BSZZ2023} that $k_{\varPhi(\M)}(\alpha)=k_{\varPhi(\M)}(\delta)$. By Lemma \ref{diagonal has zero k}, $k_{\varPhi(\M)}(\delta)=0.$ Hence, $k_{\varPhi(\M)}(\alpha)=0.$

We now prove the implication \eqref{mta}$\to$\eqref{mtb}. Fix $\alpha\in\M^n$ and assume $k_{\varPhi(\M)}(\alpha)=0.$ 

There exists a central projection $z\in \M$ such that $z\M$ is finite and $(\mathbf{1}-z)\M$ is properly infinite (see e.g. \cite[Proposition V.1.19]{T1}).

Fix $\e>0.$ Applying Lemma \ref{approximation in finite case} to the pair $(z\M,z\alpha)$, we find a diagonal $n$-tuple $\delta_1\in (z\M)^n$ such that $z\alpha-\delta_1\in \varPhi^0(z\M)$ and 
\begin{equation}\label{approximate finite part}
\|z\alpha-\delta_1\|_{\varPhi(z\M)}\le \e.
\end{equation} 

Since $(\mathbf{1}-z)$ commutes with $\alpha$, it follows from Lemma \ref{qm vanishes on subspace} that
$$k_{\varPhi((\mathbf{1}-z)\M)}((\mathbf{1}-z)\alpha)=0.$$
Applying Lemma \ref{pre-final lemma} to the pair $(({\bf 1}-z)\M,({\bf 1}-z)\alpha)$, we find a diagonal $n$-tuple $\delta_2\in (({\bf 1}-z)\M)^n$ such that $({\bf 1}-z)\alpha-\delta_2\in \varPhi^0(({\bf 1}-z)\M)$ and 
\begin{equation}\label{approximate infinite part}
\|({\bf 1}-z)\alpha-\delta_2\|_{\varPhi(({\bf 1}-z)\M)}\le \e.
\end{equation}

Set $\delta=\delta_1+\delta_2$. Since $\delta_1\in ( z\M)^n, \delta_2\in (({\bf 1}-z)\M)^n$ are mutually orthogonal diagonal $n$-tuples, it follows that $\delta\in\M^n$ is a diagonal $n$-tuple. Combining \eqref{approximate finite part} with \eqref{approximate infinite part} and using the triangle inequality, we obtain that $\alpha-\delta\in \varPhi^0(\M)$ and $$\|\alpha-\delta\|_{\varPhi(\M)}\le \|z\alpha-\delta_1\|_{\varPhi(\M)}+\|({\bf 1}-z)\alpha-\delta_2\|_{\varPhi(\M)}\le 2\e.$$
This proves the implication \eqref{mta}$\to$\eqref{mtb}.
\end{proof}

\section*{Acknowledgement}
F. Sukochev and D. Zanin are supported by the Australian Research Council DP230100434. H. Zhao acknowledges the support of Australian Government Research Training Program (RTP) Scholarship.

\end{document}